\newtheorem{thm}{Theorem}[section]
\newtheorem{lem}[thm]{Lemma}
\theoremstyle{definition}
\newtheorem{defn}[thm]{Definition}
\theoremstyle{remark}
\newtheorem{rem}[thm]{Remark}
\numberwithin{equation}{section}
\newcommand{\norm}[1]{\left\Vert#1\right\Vert}
\newcommand{\abs}[1]{\left\vert#1\right\vert}
\newcommand{\babs}[1]{\Big \vert#1 \Big \vert}
\newcommand{\set}[1]{\left\{#1\right\}}
\newcommand{\parr}[1]{\left (#1\right )}
\newcommand{\brac}[1]{\left [#1\right ]}
\newcommand{\ip}[1]{\left \langle #1 \right \rangle }
\newcommand{\Real}{\mathbb R}
\newcommand{\eps}{\varepsilon}
\newcommand{\too}{\rightarrow}
\newcommand{\OO}{\mathcal{O}}
\newcommand{\bbar}[1]{\overline{#1}}
\newcommand{\wt}[1]{\widetilde{#1}} %wide tilde
\newcommand{\wh}[1]{\widehat{#1}} %wide hat
\def \M{\mathcal{M}}
\def \N{\mathcal{N}}
\def \PL{PL} %the space of continuous piecewise linear function on mesh
\def \ncPL{ncPL} %the space of non-conforming piecewise linear function on mesh
\def \z{{z_0}} %for the argument of d(z_0,w_0)
\def \w{{w_0}}
\def \bfi{\textbf{\footnotesize{i}}} % traditional \i , which is a letter i without a dot
\def \C{\mathbb{C}} %complex plane
\def \D{\mathcal{D}} %unit disc
\def \Md{M_D} %the disc Mobius trans
\def \vol{\mbox{\rm{\footnotesize{vol}}}} %surface volume element
\def \Vol{\mbox{\rm{vol}}} %surface volume element
\def \V{V}
\def \E{E}
\def \F{F}
\def \mM{\textbf{\textsf{M}}}
\def \mV{\textbf{\textsf{V}}}
\def \mE{\textbf{\textsf{E}}}
\def \mE{\textbf{\textsf{E}}}
\def \mF{\textbf{\textsf{F}}}
\def \mv{\textbf{\textsf{v}}}
\def \me{\textbf{\textsf{e}}}
\def \mf{\textbf{\textsf{f}}}%
\def \R{\mathbb{R}}
\def \ddelta{\mbox{\boldmath{$\delta$}}}
\def \Mbs{M\"{o}bius }
\def \d{\mbox{\bf{d}}}
\def \X{\mathcal{X}}
\def \Y{\mathcal{Y}}
\def \P{\mathcal{P}}
\def \MM{\mathbb{M}}
\def \GG{\mathbb{G}}
\def \stu{*u}
\begin{document}
\title[Surface Comparison with Mass Transportation]{Surface Comparison with Mass Transportation}
\author{Y. Lipman, I. Daubechies }
\address{Princeton University}
%\email{}%
%
%\thanks{}%
%\subjclass{}%
%\keywords{}%
%
%%\date{}%
%%\dedicatory{}%
%%\commby{}%
%% ----------------------------------------------------------------
\begin{abstract}
We use mass-transportation as a tool
to compare surfaces (2-manifolds). In particular, we
determine the ``similarity''  of two given surfaces by solving
a mass-transportation problem between their conformal densities.
This mass transportation problem differs from the standard case
in that we require the solution to be invariant under global
M\"{o}bius transformations.\\
Our approach provides a constructive way of defining a metric in the abstract
space of simply-connected smooth surfaces with boundary
(i.e. surfaces of disk-type);
this metric can also be used to define meaningful intrinsic
distances between pairs of ``patches''  in the two surfaces,
which allows automatic alignment of the surfaces.
We provide numerical experiments on ``real-life'' surfaces to demonstrate
possible applications in natural sciences.

\end{abstract}
\maketitle

% ----------------------------------------------------------------
\section{introduction}
%Motivation
Alignment of surfaces plays a role in a wide range of scientific
disciplines. It is a standard problem in comparing different scans
of manufactured objects; various algorithms have been proposed for
this purpose in the computer graphics literature. It is  often also
a crucial step in a variety of problems in medicine and biology; in
these cases the surfaces tend to be more complex, and the alignment
problem may be harder. For instance, neuroscientists studying brain
function through functional Magnetic Resonance Imaging (fMRI)
typically observe several people performing identical tasks,
obtaining readings for the corresponding activity in the brain
cortex of each subject. In a first approximation, the cortex can be
viewed as a highly convoluted 2-dimensional surface. Because
different cortices are folded in very different ways, a synthesis of
the observations from different subjects must be based on
appropriate mappings between pairs of brain cortex surfaces, which
reduces to a family of surface alignment problems
\cite{Fischl99,Haxby09}. In another example, paleontologists
studying molar teeth of mammals rely on detailed comparisons of the
geometrical features of the tooth surfaces to distinguish species or
to determine similarities or differences in diet \cite{Jukka07}.

Mathematically, the problem of surface alignment can be described as
follows: given two 2-surfaces $\M$ and $\N$, find a mapping $f:\M
\rightarrow \N$ that preserves, as best possible, ``important
properties'' of the surfaces. The nature of the ``important
properties'' depends on the problem at hand. In this paper, we
concentrate on preserving the geometry, i.e., we would like the map
$f$ to preserve intrinsic distances, to the extent possible. In
terms of the examples listed above, this is the criterion
traditionally selected in the computer graphics literature; it also
corresponds to the point of view of the paleontologists studying
tooth surfaces. To align cortical surfaces, one typically uses the
Talairach method \cite{Lancaster00} (which relies on geometrically
defined landmarks and is thus geometric in nature as well), although
alignment based on functional correspondences has been proposed more
recently \cite{Haxby09}.

%Goal + approach
In this paper we propose a procedure to ``geometrically'' align
surfaces, based on uniformization theory and optimal mass
transportation. This approach is related to the computer graphics
constructions in \cite{Lipman:2009:MVF}, which rely on the
representation of isometries between topologically equivalent
simply-connected surfaces by M\"{o}bius transformations between
their uniformization spaces, and which exploit that 1) the
M\"{o}bius group has small dimensionality (e.g. 3 for disk-type
surfaces and 6 for sphere-type) and 2) changing the metric in one
piece of a surface has little influence on the uniformization of
distant parts. These two observations lead, in
\cite{Lipman:2009:MVF}, to fast and particularly effective
algorithms to identify near-isometries between differently deformed
versions of a surface. In our present context, these same
observations lead to a simple algorithm for surface alignment,
reducing it to a linear programming problem.

% more precision + challenge
We shall restrict ourselves to (sufficiently smooth) disk-type
surfaces; we map them to metric densities defined on the hyperbolic
disk, their canonical uniformization space. (Apart from simplifying
the description of the surface, this also removes any effect of
global translations and rotations on the description of each
individual surface.)  The alignment problem can then be studied in
the framework of Kantorovich mass-transportation
\cite{Kantorovich1942} between these metric densities, as follows.
Mass-transportation seeks to minimize the ``average distance'' over
which mass needs to be ``moved'' (in the most efficient such moving
procedure) to transform one mass density $\mu$ into another, $\nu$.
In our case the uniformizing metric density (or conformal factor)
corresponding to an initial surface is not unique, but is defined
only up to a M\"{o}bius transformation. Because a na\"{\i}ve
application of mass-transportation  on the hyperbolic disk would not
possess the requisite invariance under M\"{o}bius transformations,
we generalize the mass-transportation framework, and replace the
metric $d(x,y)$ traditionally used in defining the ``average
displacement distance'' by a metric that depends on $\mu$ and $\nu$,
measuring the dissimilarity between the two metric densities on
neighborhoods of $x$ and $y$. Introducing neighborhoods also makes
the definition less sensitive to noise in practical applications.
The optimal way of transporting mass in this generalized framework,
in which the orientation in space of the original surfaces is
``factored away'', automatically defines a corresponding optimal way
of aligning the surfaces.

Our approach also allows us to define a new distance between
surfaces. The average distance over which mass needs transporting
(to transform one metric density into the other) quantifies the
extent to which the two surfaces differ; we prove that it defines a
distance metric between surfaces.

%related works
Other distances between surfaces have been used recently for several
applications \cite{memoli07}. A prominent mathematical approach to
define distances between surfaces considers the surfaces as special
cases of \emph{metric spaces}, and uses then the Gromov-Hausdorff
(GH) distance between metric spaces \cite{Gromov06}. The GH distance
between metric spaces $X$ and $Y$ is defined through examining all
the isometric embedding of $X$ and $Y$ into (other) metric spaces;
although this distance possesses many attractive mathematical
properties, it is inherently hard computationally
\cite{memoli05,BBK06}. For instance, computing the GH distance is
equivalent to a non-convex quadratic programming problem; solving
this directly for correspondences is equivalent to integer quadratic
assignment, and is thus NP-hard \cite{Cela98}. In addition, the
non-convexity implies that the solution found in practice may be a
local instead of a global minimum, and is therefore not guaranteed
to give the correct answer for the GH distance. The distance metric
between surfaces that we define in this paper does not have these
shortcomings: because the computation of the distance between
surfaces in our approach can be recast as a linear program, it can
be implemented using efficient polynomial algorithms that are
moreover guaranteed to converge to the correct solution.

It should be noted that in \cite{memoli07}, Memoli generalizes the
GH distance of \cite{memoli05} by introducing a quadratic mass
transportation scheme to be applied to metric spaces already
equipped with a measure (mm spaces); he notes that the computation
of this Gromov-Wasserstein distance for mm spaces is somewhat easier
and more stable to implement than the original GH distance. In our
approach we do not need to equip the surfaces we compare with a
measure: after uniformization reduces the problem to comparing two
disks, we naturally "inherit" two corresponding conformal factors
that we interpret as measure densities, for which we then apply an
approach similar to the one proposed in \cite{memoli07}. Another
crucial aspect in which our work differs from \cite{memoli07} is
that, in contrast to the (continuous) quadratic programming method
proposed in \cite{memoli07} to compute the Gromov-Wasserstein
distance between mm spaces, our conformal approach leads to a convex
(even linear) problem, solvable via a linear programming method.

It is worth mentioning that optimal mass transportation has been
used as well, in the engineering literature to define interesting
metrics between images; in this context metric is often called the
Wasserstein distance. The seminal work for this image analysis
approach is the paper by Rubner et al.~\cite{Rubner2000-TEM}, in
which images are viewed as discrete measures, and the distance is
called appropriately the ``Earth Mover's Distance''.

%\textbf{Ingrid: this is new please check:}
Another related method is presented in the papers of Zeng et al.
\cite{Gu2008_a,Gu2008_b}, which also use the uniformization space to
match surfaces. Our work differs from that of Zeng et al. in that
they use prescribed feature points (defined either by the user or by
extra texture information) to calculate an interpolating harmonic
map between the uniformization spaces, and then define the final
correspondence as a composition of the uniformization maps and this
harmonic interpolant. This procedure is highly dependent on the
prescribed feature points, provided as extra data or obtained from
non-geometric information. In contrast, our work does not use any
prescribed feature points, or external data, and makes use of only
the geometry of the surface; in particular we make use of the
conformal structure itself to define deviation from (local)
isometry.

%arrangement of article
Our paper is organized as follows: in Section \ref{s:prelim} we
briefly recall some facts about uniformization and optimal mass
transportation that we shall use, at the same time introducing our
notation. Section \ref{s:optimal_vol_trans_for_surfaces} contains
the main results of this paper, constructing the distance metric
between disk-type surfaces, in several steps. Section
\ref{s:the_discrete_case_implementation} discusses various issues
that concern the numerical implementation of the framework we
propose; Section \ref{s:examples} illustrates our results with a few
examples.

% ----------------------------------------------------------------
\section{Background and Notations}
\label{s:prelim}

As described in the introduction, our framework makes use of two
mathematical theories: uniformization theory, to represent the
surfaces as measures defined on a canonical domain, and optimal mass
transportation, to align the measures. In this section we recall
some of their basic properties, and we introduce our notations.

% ----------------------------------------------------------------
\subsection{Uniformization}

By the celebrated uniformization theory for Riemann surfaces (see
for example \cite{Springer57,Farkas92}), any simply-connected
Riemann surface is conformally equivalent to one of three canonical
domains: the sphere, the complex plane, or the unit disk. Since
every 2-manifold surface $\M$ equipped with a smooth Riemannian
metric $g$ has an induced conformal structure and is thus a Riemann
surface, uniformization applies to such surfaces. Therefore, every
simply- connected surface with a Riemannian metric can be mapped
conformally to one of the three canonical domains listed above. We
shall consider surfaces $\M$ that are topologically equivalent to
disks and that come equipped with a Riemannian metric tensor $g$
(possibly inherited from the standard 3D metric if the surface is
embedded in $\R^3$). For each such $\M$  there exists a conformal
map $\phi:\M \rightarrow \D$, where $\D =\{z \ | \ |z|<1\}$ is the
open unit disk. The map $\phi$ pushes $g$ to a metric on $\D$;
denoting the coordinates in $\D$ by $z=x^1+\bfi  x^2$, we can write
this metric as
$$
\wt{g} = \phi_* g = \widetilde{\mu}(z)\, \delta_{ij}\, dx^i \otimes
dx^j,
$$
where $\widetilde{\mu}(z)>0$, Einstein summation convention is used,
and the subscript $*$ denotes the ``push-forward'' action. The
function $\widetilde{\mu}$ can also be viewed as the \emph{density
function} of the measure $\Vol_\M$ induced by the Riemann volume
element: indeed, for (measurable) $A \subset \M$,
\begin{equation}\label{e:volume_element}
    \Vol_\M(A) = \int_{\phi(A)} \widetilde{\mu}(z) \, dx^1\wedge dx^2.
\end{equation}

It will be convenient to use the hyperbolic metric on the unit disk
$(1-|z|^2)^{-2}\delta_{ij} dx^i \otimes dx^j$ as a reference metric,
rather than the standard Euclidean $\delta_{ij} dx^i \otimes dx^j$;
note that they are conformally equivalent (with conformal factor
$(1-|z|^2)^{-2}$). Instead of the density $\widetilde{\mu}(z)$, we
shall therefore use the {\em hyperbolic density function}
\begin{equation}\label{e:relation_hyperbolic_euclidean_density}
\mu^H(z):=(1-|z|^2)^{2}\,\widetilde{\mu}(z)\,,
\end{equation}
where the superscript $H$ stands for hyperbolic. We shall often drop
this superscript: unless otherwise stated $\mu=\mu^H$, and
$\nu=\nu^H$ in what follows. The density function $\mu=\mu^H$
satisfies
$$\Vol_\M(A) = \int_{\phi(A)} \mu(z)\, d\vol_H(z)\,,$$
where $d\vol_H(z)=(1-|z|^2)^{-2}\, dx^1\wedge dx^2$.

The conformal mappings of $\D$ to itself are the disk-preserving
M\"{o}bius transformations $m \in \Md$, a family with three real
parameters, defined by
\begin{equation}\label{e:disk_mobius}
    m(z) = e^{\bfi \theta}\frac{z-a}{1-\bar{a}z}, \ a\in \D, \ \theta \in [0,2\pi).
\end{equation}
Since these M\"{o}bius transformations satisfy
\begin{equation}\label{e:disk_mobius_is_isometry_of_hyperbolic_geom}
    (1-|m(z)|^2)^{-2}|m'(z)|^2 = (1-|z|^2)^{-2} \,,
\end{equation}
where $m'$ stands for the derivatives of $m$, the pull-back of $\mu$
under a mapping $m\in \Md$ takes on a particularly simple
expression. Setting $w=m(z)$, with $w=y^1+\bfi  y^2$, and
$\widetilde{g}(w)=\widetilde{\mu}(w)\delta_{ij}dy^i\otimes dy^j =
\mu(w) (1-|w|^2)^{-2}\delta_{ij}dy^i\otimes dy^j$, the definition
\[
(m^*\widetilde{g})(z)_{kl}\,dx^{k}\otimes dx^{\ell} :=
\mu(w)\,(1-|w|^2)^{-2}\,\delta_{ij} \, dy^{i}\otimes dy^{j}
\]
implies
%\begin{equation*}
\begin{align*}
(m^*\widetilde{g})_{k\ell}(z)\,dx^{k}\otimes dx^{\ell} &=\mu(m(z))(1-|m(z)|^2)^{-2}\,\delta_{ij}\,\frac{\partial y^i}{\partial x^k}\,\frac{\partial y^j}{\partial x^\ell} \,dx^{k}\otimes dx^{\ell}\\
&= \mu(m(z))\,(1-|m(z)|^2)^{-2}\,|m'(z)|^2\,\delta_{k\ell} \,dx^{k}\otimes dx^{\ell}\\
&=\mu(m(z))\,(1-|z|^2)^{-2}\,\delta_{k\ell}\,dx^{k}\otimes dx^{\ell}.\\
\end{align*}
%\end{equation*}
In other words, $(m^*\widetilde{g})(z)_{kl}\,dx^{k}\otimes
dx^{\ell}$ takes on the simple form
$m^*\mu(z)\,(1-|z|^2)^{-2}\,\delta_{kl}\,dx^{k}\otimes dx^{\ell}$,
with
\begin{equation}\label{e:pullback_of_metric_density_mu_by_mobius}
    m^*\mu(z) = \mu(m(z)).
\end{equation}
Likewise, the push-forward, under a disk M\"{o}bius transform
$m(z)=w$, of the diagonal Riemannian metric defined by the density
function $\mu=\mu^H$, is again a diagonal metric, with (hyperbolic)
density function $m_{*}\mu (w)=\left(m_{*}\mu \right)^H(w)$ given by
\begin{equation}\label{e:push_forward_of_metric_density}
    m_* \mu(w) = \mu(m^{-1}(w)).
\end{equation}

It follows that checking whether or not two surfaces $\M$ and $\N$
are isometric, or searching for (near-)\ isometries between $\M$ and
$\N$, is greatly simplified by considering the conformal mappings
from $\M$, $\N$ to $\D$: once the (hyperbolic) density functions
$\mu$ and $\nu$ are known, it suffices to identify $m \in \Md$ such
that $\nu(m(z))$ equals $\mu(z)$ (or ``nearly'' equals, in a sense
to be made precise). This was exploited in \cite{Lipman:2009:MVF} to
construct fast algorithms to find corresponding points between two
given surfaces.

% ----------------------------------------------------------------
\subsection{Optimal mass transportation}

Optimal mass transportation was introduced by G. Monge
\cite{Monge1781}, and L. Kantorovich \cite{Kantorovich1942}. It
concerns the transformation of one mass distribution into another
while minimizing a cost function that can be viewed as the amount of
work required for the task. In the Kantorovich formulation, to which
we shall stick in this paper, one considers two measure spaces
$X,Y$,  a probability measure on each, $\mu \in P(X)$, $\nu \in
P(Y)$ (where $P(X),P(Y)$ are the respective probability measure
spaces on $X$ and $Y$), and the space $\Pi(\mu,\nu)$ of probability
measures on $X \times Y$ with marginals $ \mu$ and $\nu$ (resp.),
that is, for $A\subset X$, $B\subset Y$, $\pi(A\times Y) = \mu(A)$
and $\pi(X \times B) = \nu(B)$. The \emph{optimal} mass
transportation is the element of $\Pi(\mu,\nu)$ that minimizes
$\int_{X \times Y}d(x,y)d\pi(x,y)$, where $d(x,y)$ is a cost
function. (In general, one should consider an infimum rather than a
minimum; in our case, $X$ and $Y$ are compact, $d(\cdot,\cdot)$ is
continuous, and the infimum is achieved.) The corresponding minimum,
\begin{equation}\label{e:basic_Kantorovich_transporation}
    T^R_d(\mu,\nu) = \mathop{\inf}_{\pi \in \Pi(\mu,\nu)}\int_{X \times Y}d(x,y)d\pi(x,y),
\end{equation}
is the optimal mass transportation distance between $\mu$ and $\nu$,
with respect to the cost function $d(x,y)$.

Intuitively, one can interpret this as follows: imagine being
confronted with a pile of sand on the one hand ($\mu$), and a hole
in the ground on the other hand ($-\nu$), and assume that the volume
of the sand pile equals exactly the volume of the hole (suitably
normalized, $\mu,\nu$ are probability measures). You wish to fill
the hole with the sand from the pile ($\pi \in \Pi(\mu,\nu)$), in a
way that minimizes the amount of work (represented by $\int
d(x,y)d\pi(x,y)$, where $d(\cdot,\cdot)$ can be thought of as a
distance function). In the engineering literature, the distance
$T^R_d(\mu,\nu)$ is often called the ``earth mover's distance''
\cite{Rubner2000-TEM}, a name that echoes this intuition.

In what follows, we shall apply this framework to the density
functions $\mu$ and $\nu$ on the hyperbolic disk $\D$ obtained by
conformal mappings from two surfaces $\M$, $\N$, as described in the
previous subsection.

The main obstacle to applying the Kantorovich transportation
framework directly is that the density $\mu$, characterizing the
Riemannian metric on $\D$ obtained by pushing forward the metric on
$\M$ via the uniformizing map $\phi:\M \rightarrow \D$, is not
uniquely defined: another uniformizing map $\phi':\M \rightarrow \D$
may well produce a different $\mu'$. Because the two representations
are necessarily isometric ($\phi^{-1} \circ \phi'$ maps $\M$
isometrically to itself), we must have $\mu'(m(z))=\mu(z)$ for some
$m \in \Md$. (In fact, $m=\phi' \circ \phi^{-1}$.) In a sense, the
representation of (disk-type) surfaces $\M$ as measures over $\D$
should be considered ``modulo'' the disk M\"{o}bius transformations.

We thus need to address how to adapt the optimal transportation
framework to factor out this M\"{o}bius transformation ambiguity.
This is done by designing a special distance (or cost) functional
$d^R_{\mu,\nu}(z,w)$ that {\em depends} on the conformal densities
$\mu$ and $\nu$ representing the two surfaces. (A fairly simple
argument shows that a cost function that does not depend on $\mu$
and $\nu$ allows only trivial answers, such as $d(z,w)=0$ for all
$z,w$.) As we shall see in the next section,
%Section \ref{s:optimal_vol_trans_for_surfaces},
this cost function will have an intuitive explanation:
$d^R_{\mu,\nu}(z,w)$ will measure how well an $R$-sized neighborhood
of $z$ with density $\mu$ can be matched isometrically to an
$R$-sized neighborhood of $w$ with density $\nu$ by means of a disk
M\"{o}bius transformation.

% ----------------------------------------------------------------
\section{Optimal volume transportation for surfaces}
\label{s:optimal_vol_trans_for_surfaces} We want to measure
distances between surfaces by using the Kantorovich transportation
framework to measure the transportation between the metric densities
on $\D$ obtained by uniformization applied to the surfaces. The main
obstacle is that these metric densities are not uniquely defined;
they are defined up to a M\"{o}bius transformation. In particular,
if two densities $\mu$ and $\nu$ are related by $\nu=m_*\mu$ (i.e.
$\mu(z)=\nu(m(z))$), where $m \in \Md$, then we want our putative
distance between $\mu$ and $\nu$ to be zero, since they describe
isometric surfaces, and could have been obtained by different
uniformization maps of the same surface. A standard approach to
obtain quantities that are invariant under the operation of some
group (in our case, the disk M\"{o}bius transformations) is by
minimizing over the possible group operations. For instance, we
could set
\[
\mbox{Distance}(\mu,\nu)=\inf_{m \in \Md} \left( \inf_{\pi \in
\Pi(m_*\mu, \nu)}\,\int_{\D \times \D} d(z,w)\,d\pi(z,w)\,\right)\,,
\]
where $\Pi(\mu, \nu)$ is the set of probability measures on $\D
\times \D$ with marginals $\mu \,\vol_H$ and $\nu \,\vol_H$. In
order for this to be computationally feasible, we would want the
minimum to be achieved in some $m$, which would depend on $\mu$ and
$\nu$ of course; let's denote this special minimizing $m \in \Md$ by
$m_{\mu,\nu}$. This would mean
\begin{align}
\mbox{Distance}(\mu,\nu)&=  \inf_{\pi \in \Pi([m_{\mu,\nu}]_*\mu,
\nu)}\,\int_{\D \times \D}
d(z,w)\,d\pi(z,w)\nonumber\\
&=\inf_{\pi \in \Pi(\mu, \nu)}\,\int_{\D \times \D}
d(m_{\mu,\nu}(z),w)\,d\pi(z,w)\,.\label{id_Dist}
\end{align}
If $\nu$ were itself already equal to $m'_*\mu$, for some $m' \in
\Md$, then we would expect the minimizing M\"{o}bius transformation
to be $m_{\mu,\nu}=m'$; for $\pi$ supported on the diagonal
$\verb"d"=\{(z,z)\,;\,z \in \D\}\subset \D \times \D$, defined by
$\pi(A)=\int_{A_2} \nu(w)\, d\vol_H(w)$, with $A_2= \{w; (w,w) \in
A\}$, one would then indeed have $\int_{\D \times
\D}d(m_{\mu,\nu}(z),w)\,d\pi(z,w)=0$, leading to
$\mbox{Distance}(\mu,m'_*\mu)=0$. From (\ref{id_Dist}) one sees that
this amounts to using the same formula as for the standard
Kantorovich approach with just one change: {\em the cost function
depends on} $\mu$ and $\nu$.

We shall use a variant on this construction, retaining the principle
of using  cost functions $d(\cdot,\cdot)$ in the integrand that
depend on $\mu$ and $\nu$, without picking them necessarily of the
form $d(m_{\mu,\nu}(z),w)$.  In addition to introducing such a
dependence, we also wish to incorporate some robustness into the
evaluation of the distance between (or dissimilarity of) $\mu$ and
$\nu$. We shall do this by using a cost function
$d^R_{\mu,\nu}(z,w)$ that depends on a comparison of the behavior
$\mu$ and $\nu$ on {\em neighborhoods} of $z$ and $w$, mapped by $m$
ranging over $\Md$. The next subsection shows precisely how this is
done.

% ----------------------------------------------------------------
\subsection{Construction of $d^R_{\mu,\nu}(z,w)$}

We construct $d^R_{\mu,\nu}(z,w)$ so that it indicates the extent to
which a neighborhood of the point $z$ in $(\D,\mu)$, the (conformal
representation of the) first surface, is isometric with a
neighborhood of the point $w$ in $(\D,\nu)$, the (conformal
representation of the) second surface. We will need to define two
ingredients for this: the neighborhoods we will use, and how we
shall characterize the (dis)similarity of two neighborhoods,
equipped with different metrics.

We start with the neighborhoods.

For a fixed radius $R>0$, we define $\Omega_{z_0,R}$ to be the
hyperbolic geodesic disk of radius $R$ centered at $z_0$. The
following gives an easy procedure to construct these disks. If
$z_0=0$, then  the hyperbolic geodesic disks centered at $z_0=0$ are
also  ``standard'' (i.e. Euclidean) disks centered at 0:
$\Omega_{0,R} = \{z \,;\, |z|\leq r_R \}$, where
$r_R=\mbox{arctanh}(r)=R$. The hyperbolic disks around other centers
are images of these central disks under M\"{o}bius transformations
(= hyperbolic isometries): setting
$m(z)=(z-z_0)(1-z\bar{z_0})^{-1}$, we have
\begin{equation}\label{e:neighborhood_def}
    \Omega_{z_0,R} = m^{-1}(\Omega_{0,R})\,.
\end{equation}
If $m'$, $m''$ are two maps in $\Md$ that both map $z_0$ to 0, then
$m'' \circ (m')^{-1}$ simply rotates $\Omega_{0,R}$ around its
center, over some angle $\theta$ determined by $m'$ and $m''$. From
this observation one easily checks that (\ref{e:neighborhood_def})
holds for {\em any} $m \in \Md$ that maps $z_0$ to $0$. In fact, we
have the following more general
\begin{lem}\label{lem:m(omega_z)=omega_w}
For arbitrary $z,w \in \D$ and any $R>0$, every disk M\"{o}bius
transformation $m\in \Md$ that maps $z$ to $w$ (i.e. $w=m(z)$) also
maps $\Omega_{z,R}$ to $\Omega_{w,R}$.
\end{lem}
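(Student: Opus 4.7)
The plan is to exploit the observation the authors make just before the lemma, namely that the hyperbolic geodesic disk at an arbitrary center is obtained from the centered disk $\Omega_{0,R}$ by pulling back under \emph{any} disk Möbius transformation that sends the center to $0$. So the strategy is to reduce the claim about $m$ sending $\Omega_{z,R}$ to $\Omega_{w,R}$ to a statement about a Möbius map fixing $0$, where everything is transparent because $\Omega_{0,R}$ is a Euclidean disk around the origin.

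Concretely, I would pick two auxiliary maps $m_z, m_w \in \Md$ with $m_z(z)=0$ and $m_w(w)=0$ (these exist by (\ref{e:disk_mobius})). By the characterization (\ref{e:neighborhood_def}) and the preceding remark (valid for \emph{any} $m \in \Md$ sending the chosen center to $0$), one has $\Omega_{z,R}=m_z^{-1}(\Omega_{0,R})$ and $\Omega_{w,R}=m_w^{-1}(\Omega_{0,R})$. I would then consider the composition $\tilde m := m_w \circ m \circ m_z^{-1} \in \Md$; since $m_z^{-1}$ sends $0$ to $z$, $m$ sends $z$ to $w$, and $m_w$ sends $w$ to $0$, the composite $\tilde m$ fixes $0$, so by the form (\ref{e:disk_mobius}) (with $a=0$) it is simply a rotation $\tilde m(\zeta) = e^{\bfi\theta}\zeta$.

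Rotations about $0$ obviously preserve $\Omega_{0,R}=\{|\zeta|\le r_R\}$ setwise, so $\tilde m(\Omega_{0,R})=\Omega_{0,R}$. Unwinding the definition of $\tilde m$, this gives $m(m_z^{-1}(\Omega_{0,R})) = m_w^{-1}(\Omega_{0,R})$, i.e.\ $m(\Omega_{z,R})=\Omega_{w,R}$, which is the claim.

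There is essentially no hard step here; the only thing to justify carefully is the assertion, used in the first paragraph, that the definition $\Omega_{z,R}=m^{-1}(\Omega_{0,R})$ is independent of the choice of $m \in \Md$ with $m(z)=0$. That is exactly the remark the authors already made: two such maps differ by a rotation fixing $0$, which preserves $\Omega_{0,R}$. Alternatively, one can bypass the choice issue by invoking (\ref{e:disk_mobius_is_isometry_of_hyperbolic_geom}) to declare that every $m\in\Md$ is an isometry of the hyperbolic metric, and then recognize $\Omega_{z,R}$ intrinsically as the hyperbolic ball of radius $R$ around $z$, so that any hyperbolic isometry sending $z$ to $w$ must send its $R$-ball to the $R$-ball around $w$.
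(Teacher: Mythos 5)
Your proof is correct and is essentially the argument the paper gestures at: the paper never writes out a formal proof of this lemma, instead stating it immediately after observing that any two Möbius maps sending $z_0$ to $0$ differ by a rotation fixing $0$, which is exactly the normalization you perform via $\tilde m = m_w \circ m \circ m_z^{-1}$. The alternative you mention at the end (every $m\in\Md$ is a hyperbolic isometry by (\ref{e:disk_mobius_is_isometry_of_hyperbolic_geom}), and isometries send metric balls to metric balls) is also sound and is arguably the cleanest phrasing, but both routes are the same in spirit and match the paper's intent.
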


Next we define how to quantify the (dis)similarity of the pairs
$\left(\Omega_{z_0,R}\,,\, \mu\,\right)$ and
$\left(\Omega_{w_0,R}\,,\, \nu\,\right)$. Since (global) isometries
are given by the elements of the disk-preserving M\"{o}bius group
$\Md$, we will test the extent to which the two patches are
isometric by comparing $\left(\Omega_{w_0,R}\,,\, \nu\,\right)$ with
all the images of $\left(\Omega_{z_0,R}\,,\, \mu\,\right)$ under
M\"{o}bius transformations in $\Md$ that take $z_0$ to $w_0$.

To carry out this comparison, we need a norm. Any metric $g_{ij}(z)
dx^i \otimes dx^j$ induces an inner product on the space of
2-covariant tensors, as follows: if $\mathbf{a}(z) = a_{ij}(z)
\,dx^i \otimes dx^j$ and $\mathbf{b}(z) = b_{ij}(z) \,dx^i \otimes
dx^j$ are two 2-covariant tensors in our parameter space $\D$, then
their inner product is defined by
\begin{equation}\label{e:inner_product_2-covariant_tensor}
    \langle \mathbf{a}(z), \mathbf{b}(z)\rangle =
a_{ij}(z)\,b_{k\ell}(z)\,g^{ik}(z)\,g^{j\ell}(z)~;
\end{equation}
as always, this inner product defines a norm, $\|\mathbf{a}\|_z^2 =
a_{ij}(z)\,a_{k\ell}(z)\,g^{ik}(z)\,g^{j\ell}(z)$.

Now, let us apply this to the computation of the norm of the
difference between the local metric on one surface,
$g_{ij}(z)=\mu(z)(1-|z|^2)^{-2}\delta_{ij}$, and
$h_{ij}(w)=\nu(w)(1-|w|^2)^{-2}\delta_{ij}$, the pull-back metric
from the other surface by a M\"{o}bius transformation $m$. Using
(\ref{e:inner_product_2-covariant_tensor}),(\ref{e:pullback_of_metric_density_mu_by_mobius}),
and writing $\ddelta$ for the tensor with entries $\delta_{ij}$, we
have:
\begin{align*}
\|\mu - m^*\nu\|_{z}^2 & = \|\,\mu(z) (1-|z|^2)^{-2}\ddelta -
\nu(m(z))
(1-|z|^2)^{-2} \ddelta\,\|_{z}^2 \\
& = \Big(\mu(z) -
\nu(m(z))\Big)^2(1-|z|^2)^{-4}\,\delta_{ij}\,\delta_{k\ell}
\,g^{ik}(z)\,g^{j\ell}(z)=\left(1 -
\frac{\nu(m(z))}{\mu(z)}\right)^2.
\end{align*}

We are now ready to define the distance function
$d^R_{\mu,\nu}(z,w)$:
\begin{equation}\label{e:d_mu,nu(z,w)_def}
    d^R_{\mu,\nu}(z_0,w_0) :=
\mathop{\inf}_{m \in \Md\,,\,m(z_0)=w_0}\int_{\Omega_{z_0,R}}
\,|\,\mu(z) - (m^*\nu)(z)\,|\, d\vol_H(z),
\end{equation}
where $d\vol_H(z)=(1-|z|^2)^{-2} \,dx\wedge dy$ is the volume form
for the hyperbolic disk. The integral in (\ref{e:d_mu,nu(z,w)_def})
can also be written in the following form, which makes its
invariance more readily apparent:
\begin{equation}\label{e:d_invariant_form}
    \int_{\Omega_{z_0,R}}\left|\,1 - \frac{\nu(m(z))}{\mu(z)}\right| \,d\vol_\M(z) = \int_{\Omega_{z_0,R}} \|\mu - m^*\nu \|_z \, d\vol_\M(z),
\end{equation}
where $d\vol_\M(z)=\mu(z)(1-|z|^2)^{-2}\,dx^1\wedge dx^2
=\sqrt{|g_{ij}|}\,dx^1\wedge dx^2$ is the volume form of the first
surface $\M$.

The next Lemma shows that although the integration in
(\ref{e:d_invariant_form}) is carried out w.r.t. the volume of the
first surface, this measure of distance is nevertheless symmetric:
\begin{lem}\label{lem:symmetry_of_d_integral}
If  $m\in \Md$ maps $z_0$ to $w_0$, $m(z_0)=w_0$, then
$$
\int_{\Omega_{z_0,R}}\Big|\,\mu(z) - m^*\nu(z) \,\Big|\, d\vol_H(z)
= \int_{\Omega_{w_0,R}}\Big|\,m_*\mu(w) - \nu(w) \, \Big|\,
d\vol_H(w).
$$
\end{lem}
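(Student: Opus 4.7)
The plan is a direct change of variables $w = m(z)$ in the right-hand integral, exploiting the fact that disk M\"obius transformations are hyperbolic isometries.

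First, I would assemble the three ingredients needed. The domain matches up by the preceding Lemma \ref{lem:m(omega_z)=omega_w}: since $m \in \Md$ sends $z_0$ to $w_0$, it maps $\Omega_{z_0,R}$ bijectively onto $\Omega_{w_0,R}$. The hyperbolic volume form is invariant under the substitution: writing $w = m(z)$, the Jacobian of $m$ (viewed as a map $\R^2 \to \R^2$) is $|m'(z)|^2$ since $m$ is holomorphic, so $dy^1 \wedge dy^2 = |m'(z)|^2\, dx^1 \wedge dx^2$, and then \eqref{e:disk_mobius_is_isometry_of_hyperbolic_geom} gives
\[
d\vol_H(w) = (1-|m(z)|^2)^{-2}\,|m'(z)|^2\,dx^1\wedge dx^2 = (1-|z|^2)^{-2}\,dx^1\wedge dx^2 = d\vol_H(z).
\]
The integrands match up by the pull-back/push-forward formulas \eqref{e:pullback_of_metric_density_mu_by_mobius} and \eqref{e:push_forward_of_metric_density}: evaluating at $w=m(z)$ gives $m_*\mu(w) = \mu(m^{-1}(m(z))) = \mu(z)$ and $\nu(w) = \nu(m(z)) = (m^*\nu)(z)$, hence $|m_*\mu(w) - \nu(w)| = |\mu(z) - (m^*\nu)(z)|$.

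Combining these three observations, the substitution $w = m(z)$ converts the right-hand side directly into
\[
\int_{\Omega_{z_0,R}} \bigl|\,\mu(z) - (m^*\nu)(z)\,\bigr|\,d\vol_H(z),
\]
which is the left-hand side. There is no real obstacle; the only point that could cause confusion is keeping straight that the hyperbolic density transforms by composition (not by composition times a Jacobian factor), which is precisely why the reference metric was switched from Euclidean to hyperbolic in \eqref{e:relation_hyperbolic_euclidean_density}, and it is exactly this absorption of $|m'(z)|^2$ into the hyperbolic volume form that makes the identity symmetric despite the asymmetric appearance of the integrand.
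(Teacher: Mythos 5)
Your proof is correct and follows essentially the same route as the paper's: a change of variables via $m$, combined with Lemma \ref{lem:m(omega_z)=omega_w} for the domains, M\"obius-invariance of the hyperbolic volume form (equation (\ref{e:disk_mobius_is_isometry_of_hyperbolic_geom})), and the pull-back/push-forward formulas (\ref{e:pullback_of_metric_density_mu_by_mobius}) and (\ref{e:push_forward_of_metric_density}) to match the integrands. The only cosmetic difference is that you substitute in the right-hand integral while the paper substitutes in the left-hand one, and you spell out the Jacobian cancellation a bit more explicitly.
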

\begin{proof}
By the pull-back formula
(\ref{e:pullback_of_metric_density_mu_by_mobius}), we have
$$
\int_{\Omega_{z_0,R}}\Big|\,\mu(z) - m^*\nu(z) \,\Big|\, d\vol_H(z)
= \int_{\Omega_{z_0}}\Big|\,\mu(z) - \nu(m(z)) \,\Big|\, d\vol_H(z).
$$
Performing the change of coordinates $z=m^{-1}(w)$ in the integral
on the right hand side, we obtain
$$
\int_{m(\Omega_{z_0,R})}\, \Big|\,\mu(m^{-1}(w)) - \nu(w) \Big|\,
d\vol_H(w),
$$
where we have used that $m^{-1}$ is an isometry and therefore
preserves the volume element $d\vol_H(w)=(1-|w|^2)^{-2} \,dy^1
\wedge dy^2$. By Lemma \ref{lem:m(omega_z)=omega_w},
$\,m(\Omega_{z_0,R})=\Omega_{w_0,R}\,$; using the push-forward
formula (\ref{e:push_forward_of_metric_density}) then allows to
conclude.
\end{proof}

Note that our point of view in defining our ``distance'' between $z$
and $w$ differs from the classical point of view in mass
transportation: Traditionally, $d(z,w)$ is some sort of
\emph{physical distance} between the points $z$ and $w$; in our case
$d^R_{\mu,\nu}(z,w)$ measures the dissimilarity of (neighborhoods
of) $z$ and $w$.

The next Theorem lists some important properties of $d^R_{\mu,\nu}$;
its proof is given in Appendix A.
%\newpage
\begin{thm}\label{thm:properties_of_d}
The distance function $d^R_{\mu,\nu}(z,w)$ satisfies the following
properties
\begin{table}[ht]
\begin{tabular}{c l l}
{\rm (1)} & $~d^R_{m^*_1\mu,m^*_2\nu}(m^{-1}_1(\z),m^{-1}_2(\w)) = d^R_{\mu,\nu}(\z,\w)~$ & {\rm Invariance under (well-defined)}\\
& & {\rm M\"{o}bius changes of coordinates} \\
&&\\
{\rm (2)} & $~d^R_{\mu,\nu}(\z,\w) = d^R_{\nu,\mu}(\w,\z)~$ & {\rm Symmetry} \\
&&\\
{\rm (3)} & $~d^R_{\mu,\nu}(\z,\w) \geq 0~$ & {\rm Non-negativity} \\
&&\\
{\rm (4)} &\multicolumn{2}{c}{$\!\!\!\!\!\!d^R_{\mu,\nu}(\z,\w) = 0 \,\Longrightarrow \, \Omega_{z_0,R}$ {\rm in} $(\D,\mu)$ {\rm and} $\Omega_{w_0,R}$ {\rm in} $(\D,\nu)$ {\rm are isometric} }\\
&&\\
{\rm (5)} & $~d^R_{m^*\nu, \nu}(m^{-1}(\z),\z)=0~$ & {\rm Reflexivity} \\
&&\\
{\rm (6)} & $~d^R_{\mu_1,\mu_3}(z_1,z_3) \leq
d^R_{\mu_1,\mu_2}(z_1,z_2) + d^R_{\mu_2,\mu_3}(z_2,z_3)~$ & {\rm
Triangle inequality}
\end{tabular}
\end{table}

\end{thm}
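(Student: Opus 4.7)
The plan is to handle the six properties essentially in the order listed, since each relies on a small number of common tools: the pull/push-back formulas \eqref{e:pullback_of_metric_density_mu_by_mobius}, \eqref{e:push_forward_of_metric_density}, the fact (readily checked from \eqref{e:disk_mobius_is_isometry_of_hyperbolic_geom}) that every $m \in \Md$ preserves $d\vol_H$, and Lemma \ref{lem:m(omega_z)=omega_w}, which says that a Möbius map sending $z_0$ to $w_0$ also sends $\Omega_{z_0,R}$ to $\Omega_{w_0,R}$. Non-negativity (3) is immediate from the definition \eqref{e:d_mu,nu(z,w)_def}, and Reflexivity (5) follows by exhibiting $m$ itself as an admissible transformation in the infimum defining $d^R_{m^*\nu,\nu}(m^{-1}(\z),\z)$: it satisfies $m(m^{-1}(\z))=\z$ and makes the integrand identically zero.

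For Invariance (1), I would reparametrize the infimum defining the left-hand side via the bijection $m \mapsto m_2^{-1}\circ \tilde m \circ m_1$ between $\{m \in \Md : m(m_1^{-1}(\z))= m_2^{-1}(\w)\}$ and $\{\tilde m \in \Md : \tilde m(\z) = \w\}$. After applying \eqref{e:pullback_of_metric_density_mu_by_mobius}, the integrand collapses to $|\mu(m_1(z)) - \nu(\tilde m(m_1(z)))|$; changing variables by $z' = m_1(z)$ moves the domain from $\Omega_{m_1^{-1}(\z),R}$ to $\Omega_{\z,R}$ (by Lemma \ref{lem:m(omega_z)=omega_w}) and preserves $d\vol_H$, yielding exactly the definition of $d^R_{\mu,\nu}(\z,\w)$. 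For Symmetry (2), I would apply Lemma \ref{lem:symmetry_of_d_integral} inside the infimum in \eqref{e:d_mu,nu(z,w)_def}, noting that $m_*\mu = (m^{-1})^*\mu$ and that $m \mapsto m^{-1}$ is a bijection between $\{m : m(\z)=\w\}$ and $\{m' : m'(\w)=\z\}$.

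For Triangle inequality (6), I would fix $\eps > 0$ and pick near-minimizers $m_{12}, m_{23} \in \Md$ for $d^R_{\mu_1,\mu_2}(z_1,z_2)$ and $d^R_{\mu_2,\mu_3}(z_2,z_3)$, respectively, and use $m_{13} := m_{23}\circ m_{12}$ as a competitor in the infimum defining $d^R_{\mu_1,\mu_3}(z_1,z_3)$ (note $m_{13}(z_1)=z_3$). Splitting via the pointwise triangle inequality,
\[
|\mu_1(z) - m_{13}^*\mu_3(z)| \leq |\mu_1(z) - m_{12}^*\mu_2(z)| + |m_{12}^*\mu_2(z) - m_{12}^*(m_{23}^*\mu_3)(z)|,
\]
the first term integrated over $\Omega_{z_1,R}$ is within $\eps$ of $d^R_{\mu_1,\mu_2}(z_1,z_2)$; for the second, changing variables $w=m_{12}(z)$ (using $m_{12}$-invariance of $d\vol_H$ and $m_{12}(\Omega_{z_1,R})=\Omega_{z_2,R}$) converts the integral to $\int_{\Omega_{z_2,R}}|\mu_2(w)-m_{23}^*\mu_3(w)|\,d\vol_H(w)$, which is within $\eps$ of $d^R_{\mu_2,\mu_3}(z_2,z_3)$. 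Letting $\eps \downarrow 0$ concludes.

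The most delicate step is property (4), the sharpness of the vanishing case, since one must argue that the infimum over the non-compact-looking group $\Md$ is actually attained. The key observation is that the constraint $m(z_0)=w_0$ cuts $\Md$ down to a one-parameter family (rotations after translating $z_0$ to $0$, then to $w_0$), which is compact; combined with continuous dependence of $\int_{\Omega_{z_0,R}}|\mu - m^*\nu|\,d\vol_H$ on the rotation parameter, this yields a minimizing $m^\star$. Then $d^R_{\mu,\nu}(\z,\w)=0$ forces $\mu(z)=\nu(m^\star(z))$ a.e.\ on $\Omega_{z_0,R}$, which by continuity of $\mu,\nu$ holds everywhere; via Lemma \ref{lem:m(omega_z)=omega_w}, $m^\star$ then restricts to a conformal diffeomorphism $\Omega_{z_0,R}\to\Omega_{w_0,R}$ pulling back the metric $\nu(w)(1-|w|^2)^{-2}\ddelta$ to the metric $\mu(z)(1-|z|^2)^{-2}\ddelta$, i.e.\ an isometry.
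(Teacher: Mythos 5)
Your proof is correct and takes essentially the same route as the paper's proof in Appendix A: the same Möbius reparametrization $m \mapsto m_2\circ m\circ m_1^{-1}$ for (1), the same appeal to Lemma \ref{lem:symmetry_of_d_integral} together with the bijection $m\mapsto m^{-1}$ for (2), the identity map in the infimum for (5), and the composition $m=m_2\circ m_1$ with the pointwise triangle inequality and a change of variables for (6). The only point where you go beyond the paper's own wording is (4): the paper declares (3) and (4) ``immediate from the definition,'' whereas you correctly note that (4) requires knowing the infimum in \eqref{e:d_mu,nu(z,w)_def} is actually attained, and you supply the needed compactness argument (the constraint $m(z_0)=w_0$ cuts $\Md$ to a one-parameter family $M_{D,z_0,w_0}\cong S^1$, and the integral depends continuously on the parameter) — a point the paper establishes later in Lemmas \ref{lem:a_and_tet_formula_in_mobius_interpolation} and \ref{lem:auxiliary} but never explicitly invokes in the proof of Theorem \ref{thm:properties_of_d}.
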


In addition, the function
$d^R_{\mu,\nu}:\D\times\D\,\rightarrow\,\R$ is continuous. To show
this, we first look a little more closely at the family of disk
M\"{o}bius transformations that map one pre-assigned point $z_0 \in
\D$ to another pre-assigned point $w_0 \in \D$, over which one
minimizes to define $d_{\mu}^R(z_0,w_0)$.

\begin{defn}\label{def:M_D,z_0,w_0}
For any pair of points $z_0,\,w_0 \in \D$, we denote by
$M_{D,z_0,w_0}$ the set of M\"{o}bius transformations that map $z_0$
to $w_0$.
\end{defn}

This family of M\"{o}bius transformations is completely
characterized by the following lemma:

\begin{lem}\label{lem:a_and_tet_formula_in_mobius_interpolation}
For any $z_0,w_0 \in \D$, the set $M_{D,z_0,w_0}$ constitutes a
$1$-parameter family of disk M\"{o}bius transformations,
parametrized continuously over $S^1$ (the unit circle). More
precisely, every $m \in M_{D,z_0,w_0}$ is of the form
\begin{equation}\label{e:a_of_mobius}
% \nonumber to remove numbering (before each equation)
m(z)= \tau\,\frac{z-a}{1-\overline{a}z}~,~~\mbox{ {\rm{with} }}~~ a
= a(z_0,w_0,\sigma) :=\frac{z_0-w_0
\,\overline{\sigma}}{1-\overline{z_0}\,w_0\,\overline{\sigma}}
~~~\mbox{{\rm and }}~~ \tau = \tau(z_0,w_0,\sigma) := \sigma
\frac{1- \overline{z_0} \,w_0 \,\overline{\sigma}}
{1-z_0\,\overline{w_0}\, \sigma},
\end{equation}
where $\sigma \in S_1:=\{z \in \C\,;\,|z|=1\}$ can be chosen freely.
\end{lem}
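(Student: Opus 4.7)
The plan is to exploit the group-theoretic picture: $M_{D,z_0,w_0}$ is a left coset of the stabilizer of the origin in $\Md$, and the stabilizer of $0$ is the circle of rotations $R_\sigma(z)=\sigma z$, $\sigma\in S^1$. Concretely, I would fix two particular elements of $\Md$: $\alpha_{z_0}(z):=\frac{z-z_0}{1-\overline{z_0}z}$ (sending $z_0\mapsto 0$) and $\beta_{w_0}(u):=\frac{u+w_0}{1+\overline{w_0}u}$ (sending $0\mapsto w_0$), and then define, for each $\sigma\in S^1$,
\[
m_\sigma \;:=\; \beta_{w_0}\circ R_\sigma\circ\alpha_{z_0}.
\]
Clearly $m_\sigma\in\Md$ and $m_\sigma(z_0)=w_0$.

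Next I would show this construction exhausts $M_{D,z_0,w_0}$. Given any $m\in M_{D,z_0,w_0}$, the map $\beta_{w_0}^{-1}\circ m\circ\alpha_{z_0}^{-1}$ is a disk automorphism fixing $0$; by the standard normal form \eqref{e:disk_mobius}, such an automorphism is a rotation $R_\sigma$ for a unique $\sigma\in S^1$. Hence $m=m_\sigma$, and the map $\sigma\mapsto m_\sigma$ is a continuous bijection $S^1\to M_{D,z_0,w_0}$.

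It then remains to put $m_\sigma$ into the canonical form $\tau(z-a)/(1-\overline{a}z)$ and read off $a$ and $\tau$. Direct expansion of $\beta_{w_0}(\sigma\alpha_{z_0}(z))$ and clearing denominators gives a ratio of linear expressions in $z$; factoring a constant out of the numerator and a constant out of the denominator yields precisely the formulas
\[
a \;=\; \frac{z_0-w_0\overline{\sigma}}{1-\overline{z_0}w_0\overline{\sigma}},\qquad
\tau \;=\; \sigma\,\frac{1-\overline{z_0}w_0\overline{\sigma}}{1-z_0\overline{w_0}\sigma}.
\]
To close the argument I would verify the two sanity conditions: $|\tau|=1$ follows because the numerator and denominator of its fractional factor are complex conjugates (using $|\sigma|=1$, so $\overline{1-z_0\overline{w_0}\sigma}=1-\overline{z_0}w_0\overline{\sigma}$); and $|a|<1$ follows from the standard identity $|1-\overline{p}q|^2-|p-q|^2=(1-|p|^2)(1-|q|^2)$ applied with $p=z_0$ and $q=w_0\overline{\sigma}$, giving $(1-|z_0|^2)(1-|w_0|^2)>0$.

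The only real obstacle is algebraic bookkeeping in the expansion of the composition; there is no conceptual difficulty, since the group-theoretic coset argument makes both the $S^1$-parametrization and its continuity automatic. Once the identification $m_\sigma\leftrightarrow\sigma$ is established, the explicit formulas for $a$ and $\tau$ are a direct computation.
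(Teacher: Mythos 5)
Your proof is correct and follows essentially the same route as the paper's: the paper also factors a general $m\in M_{D,z_0,w_0}$ as a composition $m_{\gamma,w_0}^{-1}\circ m_{\psi,z_0}$ of a map sending $z_0\mapsto 0$ and a map sending $0\mapsto w_0$, and observes that only $\sigma=e^{\bfi(\psi-\gamma)}$ matters, which is precisely your coset/stabilizer decomposition $\beta_{w_0}\circ R_\sigma\circ\alpha_{z_0}$. Your added sanity checks ($|\tau|=1$, $|a|<1$) are a nice touch but go beyond what the paper records.
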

\begin{proof}
By (\ref{e:disk_mobius}), the disk M\"{o}bius transformations that
map $z_0$ to $0$ all have the form
\[
m_{\psi,z_0}(z)=e^{\bfi \psi}\,\frac{z-z_0}{1-\overline{z_0}\,z}\,,
~\mbox{ the inverse of which is }~~ m_{\psi,z_0}^{-1}(w)=e^{-\bfi
\psi}\, \frac{w+e^{\bfi \psi}z_0}{1+ e^{-\bfi
\psi}\,\overline{z_0}w}~,
\]
where $\psi \in \R$ can be set arbitrarily. It follows that the
elements of $M_{D,z_0,w_0}$ are given by the family
$m_{\gamma,w_0}^{-1}\circ m_{\psi,z_0}$, with $\psi,\,\gamma \in
\R$. Working this out, one finds that these combinations of
M\"{o}bius transformations take the form (\ref{e:a_of_mobius}), with
$\sigma=e^{\bfi (\psi-\gamma)}$.
\end{proof}
We shall denote by $m_{z_0,w_0,\sigma}$ the special disk M\"{obius}
transformation defined by (\ref{e:a_of_mobius}). In view of our
interest in $d^R_{\mu,\nu}$, we also define the auxiliary function
\[
\Phi: \D \times \D \times S_1 \longrightarrow \C
\]
by $\Phi(z_0,w_0,\sigma) =
\int_{\Omega(z_0,R)}\,|\,\mu(z)-\nu(m_{z_0,w_0,\sigma}(z))\,|\,d\vol_H(z)$.
This function has the following continuity properties, inherited
from $\mu$ and $\nu$:

\begin{lem}\label{lem:auxiliary} $~$\\
$\bullet$ For each fixed $(z_0,w_0)$, the function $\Phi(z_0,w_0,\cdot)$ is continuous on $S_1$.\\
$\bullet$ For each fixed $\sigma \in S_1$, $
\Phi(\cdot,\cdot,\sigma) $ is continuous on $\D \times \D$.
Moreover, the family $ \Big(\Phi(\cdot,\cdot,\sigma)\Big)_{\sigma
\in S_1} $ is equicontinuous.
\end{lem}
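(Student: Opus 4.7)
The main technical hurdle is that the domain of integration $\Omega_{z_0,R}$ itself depends on $z_0$, so pointwise dominated-convergence arguments in the integrand do not apply directly. The plan is to exploit the M\"obius-invariance of $d\vol_H$ to rewrite $\Phi$ as an integral over the \emph{fixed} compact set $\Omega_{0,R}$, after which everything reduces to continuity of the integrand under uniform estimates.

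Concretely, let $\phi_{z_0}(z) = (z-z_0)/(1-\overline{z_0}z) \in \Md$; by Lemma \ref{lem:m(omega_z)=omega_w}, $\phi_{z_0}^{-1}$ sends $\Omega_{0,R}$ onto $\Omega_{z_0,R}$. Since $d\vol_H$ is $\Md$-invariant, substitution yields
\[
\Phi(z_0,w_0,\sigma) \;=\; \int_{\Omega_{0,R}} \Big| \mu\bigl(\phi_{z_0}^{-1}(u)\bigr) - \nu\bigl( \Psi_{z_0,w_0,\sigma}(u)\bigr)\Big| \, d\vol_H(u),
\]
where $\Psi_{z_0,w_0,\sigma} := m_{z_0,w_0,\sigma}\circ \phi_{z_0}^{-1}$. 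Because $m_{z_0,w_0,\sigma}(z_0)=w_0$, Lemma \ref{lem:m(omega_z)=omega_w} further implies $\Psi_{z_0,w_0,\sigma}(\Omega_{0,R}) = \Omega_{w_0,R}$. The formulas (\ref{e:a_of_mobius}) for $a$ and $\tau$ are rational in $(z_0,w_0,\sigma)$ with nonvanishing denominators on $\D\times\D\times S_1$, so the coefficients of $\Psi_{z_0,w_0,\sigma}$ depend continuously on $(z_0,w_0,\sigma)$. This is the only preparation we need.

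For the first bullet, fix $(z_0,w_0)$, and let $\sigma_n\to\sigma$ in $S_1$. Then $\Psi_{z_0,w_0,\sigma_n}\to \Psi_{z_0,w_0,\sigma}$ uniformly on the compact set $\Omega_{0,R}$, and the images all lie in $\Omega_{w_0,R}\Subset\D$. Since $\nu$ is continuous, hence uniformly continuous on the compact set $\Omega_{w_0,R}$, the integrand converges pointwise (in fact uniformly) on $\Omega_{0,R}$. It is also uniformly bounded, by $\sup_{\Omega_{0,R}}\mu(\phi_{z_0}^{-1}(\cdot))+\sup_{\Omega_{w_0,R}}\nu$. Dominated convergence then yields $\Phi(z_0,w_0,\sigma_n)\to\Phi(z_0,w_0,\sigma)$. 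For the second bullet with $\sigma$ fixed, an entirely parallel argument applies: fix a point $(z_0^*,w_0^*)\in \D\times\D$ and a compact neighborhood $K\Subset \D\times\D$. On $K$ the map $(z_0,w_0)\mapsto \Psi_{z_0,w_0,\sigma}|_{\Omega_{0,R}}$ is continuous into $C(\Omega_{0,R})$, and the image set $\bigcup_{(z_0,w_0)\in K}\Omega_{w_0,R}$ is a compact subset of $\D$. Uniform continuity of $\mu$ and $\nu$ on the relevant compact subsets of $\D$, plus dominated convergence, give continuity.

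For equicontinuity, we refine the previous argument by keeping $\sigma$ as a parameter. The map
\[
(z_0,w_0,\sigma) \;\longmapsto\; \Psi_{z_0,w_0,\sigma}\big|_{\Omega_{0,R}} \;\in\; C(\Omega_{0,R})
\]
is jointly continuous, and its restriction to the compact set $K\times S_1$ is therefore uniformly continuous. Similarly, the compact set $\bigcup_{(z_0,w_0,\sigma)\in K\times S_1}\Psi_{z_0,w_0,\sigma}(\Omega_{0,R}) \subset \D$ is bounded away from $\partial\D$, so $\nu$ is uniformly continuous there. Combining these, given $\eps>0$ we obtain $\delta>0$, independent of $\sigma\in S_1$, such that $\|(z_0,w_0)-(z_0',w_0')\|<\delta$ (with both pairs in $K$) forces the integrand of $\Phi$ to change by at most $\eps$ uniformly on $\Omega_{0,R}$, hence $|\Phi(z_0,w_0,\sigma)-\Phi(z_0',w_0',\sigma)|<\eps\cdot\Vol_H(\Omega_{0,R})$. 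This is precisely equicontinuity of the family $\{\Phi(\cdot,\cdot,\sigma)\}_{\sigma\in S_1}$ at $(z_0^*,w_0^*)$, and since the point was arbitrary, we are done. The only subtlety that needs explicit attention is the verification that the image sets $\Omega_{w_0,R}$ (resp. $\Psi_{z_0,w_0,\sigma}(\Omega_{0,R})$) stay in a fixed compact subset of $\D$ as the parameters range over $K$ (resp. $K\times S_1$); this follows from the continuity of the hyperbolic-ball construction and compactness of the parameter sets.
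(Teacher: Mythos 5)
Your proof is correct, and it differs from the paper's in a way worth noting. Both arguments rest on the same three pillars: M\"obius invariance of $d\vol_H$ to tame the moving integration domain, continuity of the coefficients $a(z_0,w_0,\sigma)$ and $\tau(z_0,w_0,\sigma)$ from Lemma \ref{lem:a_and_tet_formula_in_mobius_interpolation}, and uniform continuity of $\mu,\nu$ on compact subsets of $\D$. The divergence is in how equicontinuity in $\sigma$ is extracted. The paper keeps the integration on $\Omega_{z_0,R}$ (transporting $\Omega_{z_0',R}$ to it via $m_{z_0,z_0',1}$), and then derives explicit algebraic bounds on $|a(z_0,w_0,\sigma)-a(z_0,w_0',\sigma)|$ and $|\tau(z_0,w_0,\sigma)-\tau(z_0,w_0',\sigma)|$, feeding these into a pointwise estimate $|m_{z_0,w_0,\sigma}(z)-m_{z_0,w_0',\sigma}(z)|\leq\zeta\cdot 2(1+|z|)/(1-|z|)^2$ that is visibly $\sigma$-free. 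You instead pull the integral back to the fixed domain $\Omega_{0,R}$ via $\phi_{z_0}$, view $(z_0,w_0,\sigma)\mapsto\Psi_{z_0,w_0,\sigma}|_{\Omega_{0,R}}$ (and $z_0\mapsto\phi_{z_0}^{-1}|_{\Omega_{0,R}}$) as continuous maps into $C(\Omega_{0,R})$, and read off uniform continuity on the compact product $K\times S_1$; the $\sigma$-uniformity then comes for free from compactness of $S_1$ rather than from explicit estimates. Your route is shorter and cleaner, and makes transparent why the result holds. What the paper's explicit-estimate route buys, in exchange, is a concrete modulus: the bound $\zeta<\delta(1-|z|^2)/8$ quantifies how $\delta$ must shrink, which could be reused if one wanted an explicit rate in Theorem \ref{thm:continuity_of_d^R_mu,nu} or in the modulus $\omega_{d^R_{\xi,\zeta}}$ appearing in the discretization error bound (\ref{e:optimal_trans_discrete_approx_with_mod_cont}). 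Your soft-analysis version yields existence of the modulus but not its size. One small housekeeping note: you should state at the outset (as you do implicitly at the end) that equicontinuity is being proved at each point of $\D\times\D$, i.e.\ locally; since any point has a compact neighborhood $K\Subset\D\times\D$, this suffices, and it matches what the paper actually establishes as well.
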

\begin{proof}
The proof of this Lemma is given in Appendix A.
\end{proof}

Note that since $S^1$ is compact, Lemma \ref{lem:auxiliary} implies
that the infimum in the definition of $d^R_{\mu,\nu}$ can be
replaced by a minimum:
\[
d^R_{\mu,\nu}(z_0,w_0)=\mathop{\min}_{m(z_0)=w_0}\,
\int_{\Omega_{z_0,R}}\,|\,\mu(z)-\nu(m(z))\,|\,d\vol_H(z)~.
\]

We have now all the building blocks to prove
\begin{thm}\label{thm:continuity_of_d^R_mu,nu}
If $\mu$ and $\nu$ are continuous from $\D$ to $\R$, then
$d^R_{\mu,\nu}(z,w)$ is a continuous function on %$\bbar{\D}\times \bbar{\D}$,
$\D\times\D$.
\end{thm}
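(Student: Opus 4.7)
The plan is to reduce everything to the auxiliary function $\Phi(z_0,w_0,\sigma)$ introduced just before the theorem and exploit two facts from Lemma~\ref{lem:auxiliary}: (i) for each fixed $(z_0,w_0)$, $\Phi(z_0,w_0,\cdot)$ is continuous on the compact set $S^1$, so that
\[
d^R_{\mu,\nu}(z_0,w_0)=\min_{\sigma\in S^1}\,\Phi(z_0,w_0,\sigma)
\]
is attained at some $\sigma^*=\sigma^*(z_0,w_0)\in S^1$; and (ii) the family $\{\Phi(\cdot,\cdot,\sigma)\}_{\sigma\in S^1}$ is equicontinuous on $\D\times\D$.

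With that in hand the argument is a standard ``minimum of an equicontinuous family is continuous'' trick. Given $(z_0,w_0)\in\D\times\D$ and $\varepsilon>0$, equicontinuity supplies a $\delta>0$ such that whenever $(z_0',w_0')\in\D\times\D$ satisfies $|z_0-z_0'|+|w_0-w_0'|<\delta$, one has
\[
|\Phi(z_0',w_0',\sigma)-\Phi(z_0,w_0,\sigma)|<\varepsilon\quad\text{for every }\sigma\in S^1.
\]
Pick minimizers $\sigma^*$ for $\Phi(z_0,w_0,\cdot)$ and $\sigma'^*$ for $\Phi(z_0',w_0',\cdot)$. Then
\[
d^R_{\mu,\nu}(z_0',w_0')\;\le\;\Phi(z_0',w_0',\sigma^*)\;\le\;\Phi(z_0,w_0,\sigma^*)+\varepsilon\;=\;d^R_{\mu,\nu}(z_0,w_0)+\varepsilon,
\]
and symmetrically $d^R_{\mu,\nu}(z_0,w_0)\le d^R_{\mu,\nu}(z_0',w_0')+\varepsilon$ using $\sigma'^*$. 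This yields $|d^R_{\mu,\nu}(z_0,w_0)-d^R_{\mu,\nu}(z_0',w_0')|\le\varepsilon$ and the continuity follows.

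The one place where real work is hidden is the uniform (in $\sigma$) continuity in $(z_0,w_0)$ supplied by Lemma~\ref{lem:auxiliary}, but that is exactly what has been set aside to Appendix~A; the theorem itself needs no more than the two bullet points of that lemma plus the compactness of $S^1$. I expect no subtlety beyond being careful that the integrand in $\Phi$ involves both the moving domain $\Omega_{z_0,R}$ and the moving map $m_{z_0,w_0,\sigma}$, which is what makes the equicontinuity claim (rather than mere separate continuity) the crucial ingredient; however, once equicontinuity is granted, the proof is essentially the three-line chain of inequalities above.
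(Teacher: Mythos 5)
Your argument is exactly the paper's: both proofs invoke Lemma~\ref{lem:auxiliary} to get equicontinuity of the family $\{\Phi(\cdot,\cdot,\sigma)\}_{\sigma\in S^1}$ together with compactness of $S^1$ (so the infimum is a minimum), and then run the standard two-sided chain of inequalities using the minimizing $\sigma$'s. The only cosmetic difference is that you start by bounding $d^R_{\mu,\nu}(z_0',w_0')$ via the minimizer $\sigma^*$ at $(z_0,w_0)$, while the paper starts in the other direction; since both proofs do both directions, this is the same proof.
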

\begin{proof}
%+++++++++++++++++
Pick an arbitrary point  $(z_0,w_0) \in \D \times \D$, and pick
$\eps>0$ arbitrarily small.

By Lemma \ref{lem:auxiliary}, there exists a $\delta>0$ such that,
for $|z'_0-z_0|<\delta$, $|w'_0-w_0|<\delta$, we have
\[
\left|\,\Phi(z_0,w_0,\sigma)-\Phi(z'_0,w'_0,\sigma)\,\right|\,\leq\,
\eps~,
\]
uniformly in $\sigma$. Pick now arbitrary $z'_0,w'_0$ so that
$|z_0-z'_0|,|w_0-w'_0|<\delta$.

Let $m_{z_0,w_0,\sigma}$, resp. $m_{z'_0,w'_0,\sigma'}$, be the
minimizing \Mbs transform in the definition of
$d_{\mu,\nu}^R(z_0,w_0)$, resp. $d_{\mu,\nu}^R(z'_0,w'_0)$, i.e.
\[
d^R_{\mu,\nu}(z_0,w_0)= \Phi(z_0,w_0,\sigma) \ \ \textrm{and} \ \
d^R_{\mu,\nu}(z'_0,w'_0)= \Phi(z_0,w_0,\sigma')~.
\]

It then follows that
\begin{align*}
d^R_{\mu,\nu}(z_0,w_0)&=\min_{\tau}\Phi(z_0,w_0,\tau)
\leq \Phi(z_0,w_0,\sigma')\\
&\leq \Phi(z'_0,w'_0,\sigma')+ |\Phi(z_0,w_0,\sigma') -
\Phi(z'_0,w'_0,\sigma')|= d^R_{\mu,\nu}(z'_0,w'_0)+
|\Phi(z_0,w_0,\sigma') - \Phi(z'_0,w'_0,\sigma')|\\
&\leq d^R_{\mu,\nu}(z'_0,w'_0)+ \mathop{\sup}_{\omega \in
S_1}|\Phi(z_0,w_0,\omega) - \Phi(z'_0,w'_0,\omega)| \leq
d^R_{\mu,\nu}(z'_0,w'_0)+ \eps~.
\end{align*}
Likewise $d^R_{\mu,\nu}(z'_0,w'_0) \leq d^R_{\mu,\nu}(z_0,w_0) +
\eps$, so that $\abs{d^R_{\mu,\nu}(z_0,w_0) -
d^R_{\mu,\nu}(z'_0,w'_0)}<\eps$.
\end{proof}

% ----------------------------------------------------------------
\subsection{Incorporating $d^R_{\mu,\nu}(z,w)$ into the transportation framework}
The next step in constructing the distance operator between surfaces
is to incorporate the distance $d^R_{\mu,\nu}(z,w)$ defined in the
previous subsection into the (generalized) Kantorovich
transportation model:
\begin{equation}
T^R_d(\mu,\nu)=\inf_{\pi\in \Pi(\mu,\nu)}\int_{\D\times
\D}d^R_{\mu,\nu}(z,w)d\pi(z,w).
\label{e:generalized_Kantorovich_transportation}
\end{equation}
The main result is that this procedure (under some extra conditions)
furnishes a \emph{metric} between (disk-type) surfaces.

\begin{thm}%\label{thm:continuity_of_d^R_mu,nu}
There exists $\pi^* \in \Pi(\mu,\nu)$ such that
$$\int_{\D\times \D}d^R_{\mu,\nu}(z,w)d\pi^*(z,w)=\inf_{\pi\in \Pi(\mu,\nu)}\int_{\D\times \D}d^R_{\mu,\nu}(z,w)d\pi(z,w).$$
\end{thm}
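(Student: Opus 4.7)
The plan is to apply the direct method of the calculus of variations: realize $\Pi(\mu,\nu)$ as a weakly compact subset of the space of Borel probability measures on $\D\times\D$, show that the functional $J(\pi):=\int_{\D\times\D}d^R_{\mu,\nu}(z,w)\,d\pi(z,w)$ is continuous (in particular lower semi-continuous) in the weak topology, and invoke Weierstrass. This is essentially Kantorovich's classical existence argument; the only structural caveat is that $\D$ is not compact, so tightness has to be addressed explicitly.

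First I would check that $\Pi(\mu,\nu)$ is non-empty, tight, and weakly closed. Non-emptiness is immediate from the product measure $(\mu\vol_H)\otimes(\nu\vol_H)\in\Pi(\mu,\nu)$. For tightness, both marginals are probability measures on the Polish space $\D$ and are therefore individually tight; choosing compact sets $K_\eps,L_\eps\subset\D$ with $\mu\vol_H(\D\setminus K_\eps)<\eps/2$ and $\nu\vol_H(\D\setminus L_\eps)<\eps/2$ gives $\pi\bigl((\D\times\D)\setminus(K_\eps\times L_\eps)\bigr)<\eps$ uniformly over $\pi\in\Pi(\mu,\nu)$, so Prokhorov's theorem yields weak relative compactness. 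Weak closedness follows because for any $\varphi\in C_b(\D)$ both lifts $(z,w)\mapsto\varphi(z)$ and $(z,w)\mapsto\varphi(w)$ belong to $C_b(\D\times\D)$, and weak convergence $\pi_n\rightharpoonup\pi$ therefore transmits the marginal conditions to the limit. Hence $\Pi(\mu,\nu)$ is weakly compact and non-empty.

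Second I would establish continuity of $J$. Theorem~\ref{thm:continuity_of_d^R_mu,nu} gives continuity of $d^R_{\mu,\nu}$ on $\D\times\D$; to conclude that $J$ is weakly continuous it suffices to show $d^R_{\mu,\nu}$ is bounded. But for any admissible $m$,
\[
\int_{\Omega_{z_0,R}}|\mu(z)-(m^*\nu)(z)|\,d\vol_H(z)\,\leq\,\int_\D\mu\,d\vol_H+\int_\D m^*\nu\,d\vol_H\,=\,2,
\]
where the last equality uses that $m$ is a hyperbolic isometry and therefore preserves $\vol_H$ (so $m^*\nu$ is again a probability density). Taking the infimum over $m$ gives $0\leq d^R_{\mu,\nu}\leq 2$, so $d^R_{\mu,\nu}\in C_b(\D\times\D)$ and $\pi\mapsto J(\pi)$ is continuous on $\Pi(\mu,\nu)$ with the weak topology. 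A continuous function on a non-empty weakly compact set attains its infimum, producing the required $\pi^*$.

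The step demanding the most care is the tightness/closedness of $\Pi(\mu,\nu)$, because $\D$ is open and not compact; however, this is handled in standard fashion once one observes that the fixed marginals are themselves tight. If continuity of $d^R_{\mu,\nu}$ were not available, one would instead verify weak lower semi-continuity of $J$ via Portmanteau's theorem and still conclude that the infimum is attained, but the continuity provided by Theorem~\ref{thm:continuity_of_d^R_mu,nu} makes the argument cleaner.
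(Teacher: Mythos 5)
Your proof follows essentially the same route as the paper's: both rely on tightness of $\Pi(\mu,\nu)$ (inherited from the individual tightness of the marginals on the Polish space $\D$), Prokhorov's theorem, and the continuity of $d^R_{\mu,\nu}$ from Theorem~\ref{thm:continuity_of_d^R_mu,nu} to pass to the limit. Your version is actually a bit more careful than the paper's: you explicitly verify that $d^R_{\mu,\nu}$ is \emph{bounded} (via $d^R_{\mu,\nu}\leq 2$, using that M\"obius maps preserve $\vol_H$), which is needed for weak convergence of measures to yield convergence of the integral, and you explicitly check that $\Pi(\mu,\nu)$ is weakly closed so the limit $\pi^*$ stays admissible --- two points the paper leaves implicit. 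The only structural difference is presentational (weak compactness of $\Pi(\mu,\nu)$ plus Weierstrass, versus extracting a convergent subsequence from a minimizing sequence), but the two are logically the same argument.
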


\begin{proof}
%+++++++++++++++++
%existence of minimizer
This proof follows the same argument as in \cite{Villani:2003},
adapted here to our generalized setting. It uses the continuity of
the distance function to derive the existence of a global minimum of
(\ref{e:generalized_Kantorovich_transportation}). Let
$\Big(\pi_k\Big)_{k\in \mathbb{N}} \in \Pi(\mu,\nu)$ be a minimizer
sequence of (\ref{e:generalized_Kantorovich_transportation}), for
example by taking
$$
\int_{\D\times \D}d^R_{\mu,\nu}(z,w)d\pi_k(z,w) < T^R_d(\mu,\nu) +
\frac{1}{k}.
$$
Then this sequence of measures is tight, that is, for every $\eps
>0$, there exists a compact set $C\subset \D\times\D$ such that
$\pi_k(C)>1-\eps$, for all $k \in \mathbb{N}$. To see this, note
that since $\D$ is separable and complete, the measures $\mu$, $\nu$
are \emph{tight} measures (see \cite{Billingsley68}). This means
that for arbitrary $\eps >0$, there exist compact sets $A,B \subset
\D$ so that $\mu(A)>1-\eps/2$ and $\nu(B)>1-\eps/2$. It then follows
that, for all $k \in \mathbb{N}$,
$$
\pi_k(A\times B) = \pi_k(A\times \D) - \pi_k(A \times (\D \setminus
B)\,) \geq \mu(A) - \nu(\D\setminus B)  = \mu(A) - (1 - \nu(B)) > 1
- \eps.
$$
Since the set $C = A \times B \subset \D\times \D$ is compact, this
proves the claimed tightness of the family $\Big(\pi_k\Big)_{k\in
\mathbb{N}}$. By Prohorov's Theorem \cite{Billingsley68}, a tight
family of measures is sequentially weakly compact; in our case this
means that $\Big(\pi_k\Big)_{k\in \mathbb{N}}$ has a weakly
convergent subsequence $\Big(\pi_{k_n}\Big)_{n\in \mathbb{N}}$; by
definition, its weak limit $\pi^*$ satisfies, for every bounded
continuous function $f$ on $\D\times\D$,
$$
\int_{\D\times \D}f(z,w)d\pi_{k_n}(z,w) \rightarrow \int_{\D\times
\D}f(z,w)d\pi^*(z,w).
$$
Therefore, taking in particular the continuous function
$f(z,w)=d^R_{\mu,\nu}(z,w)$, we obtain
$$
T^R_d(\mu,\nu) = \lim_{n\rightarrow \infty} \int_{\D\times
\D}f(z,w)d\pi_{k_n}(z,w) = \int_{\D\times \D}f(z,w)d\pi^*(z,w).
$$

\end{proof}

Under rather mild conditions, the ``standard'' Kantorovich
transportation (\ref{e:basic_Kantorovich_transporation}) on a metric
spaces $(X,d)$  defines a metric on the space of probability
measures on $X$ . We will prove that our generalization defines a
distance metric as well. More precisely, we shall prove first that
$$
\d^R(\M,\N)=T^R_d(\mu,\nu)
$$
defines a semi-metric in the set of all disk-type surfaces. We shall
restrict ourselves to surfaces that are sufficiently smooth to allow
uniformization, so that they can be globally and conformally
parameterized over the hyperbolic disk. Under some extra
assumptions, we will prove that $\d^R$ is a metric, in the sense
that $\d^R(\M,\N)=0$ implies that $\M$ and $\N$ are isometric.

For the semi-metric part we will again adapt a proof given in
\cite{Villani:2003} to our framework. In particular, we shall make
use of the following ``gluing lemma'':
\begin{lem}
\label{lem:gluing_lemma} Let $\mu_1,\mu_2,\mu_3$ be three
probability measures on $\D$, and let $\pi_{12} \in
\Pi(\mu_1,\mu_2)$, $\pi_{23} \in \Pi(\mu_2,\mu_3)$ be two
transportation plans. Then there exist a probability measure $\pi$
on $\D \times \D \times \D$ that has $\pi_{12},\pi_{23}$ as
marginals, that is $\int_{z_3\in\D} d\pi(z_1,z_2,z_3) =
d\pi_{12}(z_1,z_2) $, and $\int_{z_1\in\D} d\pi(z_1,z_2,z_3) =
d\pi_{23}(z_2,z_3)$.
\end{lem}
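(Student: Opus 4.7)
The plan is to construct $\pi$ by gluing $\pi_{12}$ and $\pi_{23}$ along their common marginal $\mu_2$, using the disintegration theorem. Since $\D$ is a Polish space (open subset of a Polish space, separable and metrizable by the hyperbolic metric, which makes it complete), regular conditional probabilities exist.

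First, I would disintegrate $\pi_{12}$ with respect to its second marginal $\mu_2$: there is a measurable family of probability kernels $\{\pi_{12}(\cdot\,|\,z_2)\}_{z_2\in\D}$ on $\D$ such that
\[
d\pi_{12}(z_1,z_2) \;=\; d\pi_{12}(z_1\,|\,z_2)\,d\mu_2(z_2).
\]
Analogously, disintegrate $\pi_{23}$ with respect to its first marginal $\mu_2$ to obtain kernels $\{\pi_{23}(\cdot\,|\,z_2)\}_{z_2\in\D}$ satisfying $d\pi_{23}(z_2,z_3)=d\pi_{23}(z_3\,|\,z_2)\,d\mu_2(z_2)$.

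Next, I would define $\pi$ on $\D\times\D\times\D$ by the formula
\[
d\pi(z_1,z_2,z_3) \;:=\; d\pi_{12}(z_1\,|\,z_2)\,d\pi_{23}(z_3\,|\,z_2)\,d\mu_2(z_2).
\]
That is, for every bounded measurable $f$ on $\D^3$,
\[
\int f\,d\pi \;=\; \int_{\D}\!\!\int_{\D}\!\!\int_{\D} f(z_1,z_2,z_3)\, d\pi_{12}(z_1\,|\,z_2)\,d\pi_{23}(z_3\,|\,z_2)\,d\mu_2(z_2).
\]
Since both kernels are probability measures in their first argument, Fubini (together with $\mu_2$ being a probability measure) gives $\pi(\D^3)=1$, so $\pi$ is a probability measure.

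It remains to verify the marginal identities. For the $(z_1,z_2)$-marginal, I would take $f$ of the form $f(z_1,z_2,z_3)=g(z_1,z_2)$ and integrate out $z_3$ using $\int d\pi_{23}(z_3\,|\,z_2)=1$; this yields $\int g(z_1,z_2)\,d\pi_{12}(z_1\,|\,z_2)\,d\mu_2(z_2)=\int g\,d\pi_{12}$ by the defining property of the disintegration. The $(z_2,z_3)$-marginal is handled symmetrically. The main subtlety is invoking the existence of the regular conditional probabilities; this rests on $\D$ being Polish, so it is a standard consequence of the disintegration theorem and requires no further work beyond citing it.
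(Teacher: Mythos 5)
Your proof is correct and is precisely the standard argument via disintegration into regular conditional probabilities; the paper does not prove this lemma itself but cites it from Villani's book, and your construction $d\pi(z_1,z_2,z_3)=d\pi_{12}(z_1\,|\,z_2)\,d\pi_{23}(z_3\,|\,z_2)\,d\mu_2(z_2)$ is exactly the one given there. Your justification that $\D$ is Polish (hence regular conditional probabilities exist) is sound, and the marginal checks are correct.
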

This lemma will be used in the proof of the following:

\begin{thm}
For two disk-type surfaces $\M=(\D,\mu)$, $\N=(\D,\nu)$, let
$\d^R(\M,\N)$ be defined by
$$
\d^R(\M,\N)=T^R_d(\mu,\nu).
$$
Then $\d^R$ defines a semi-metric on the space of disk-type
surfaces.
\end{thm}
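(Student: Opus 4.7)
The plan is to verify the four axioms of a semi-metric---non-negativity, reflexivity $\d^R(\M,\M)=0$, symmetry, and the triangle inequality---by lifting the pointwise properties of $d^R_{\mu,\nu}$ recorded in Theorem~\ref{thm:properties_of_d} to the transportation functional $T^R_d$, in the same spirit as the classical proof that Wasserstein distances are metrics (following Villani). Non-negativity is immediate from property~(3), since every admissible plan integrates a non-negative integrand. For reflexivity I would exhibit an explicit plan achieving value zero: take $\pi^*$ to be the push-forward of $\mu\,d\vol_H$ under the diagonal embedding $z\mapsto(z,z)$. Then $\pi^*\in\Pi(\mu,\mu)$, and property~(5) with $m$ the identity gives $d^R_{\mu,\mu}(z,z)=0$ pointwise, so $\int d^R_{\mu,\mu}\,d\pi^*=0$.

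For symmetry I would use the swap involution $S(z,w)=(w,z)$ on $\D\times\D$: given any $\pi\in\Pi(\mu,\nu)$, the push-forward $S_*\pi$ lies in $\Pi(\nu,\mu)$, and property~(2) combined with a change of variables gives
\[
\int_{\D\times\D} d^R_{\nu,\mu}(w,z)\,dS_*\pi(w,z) \;=\; \int_{\D\times\D} d^R_{\mu,\nu}(z,w)\,d\pi(z,w).
\]
Taking the infimum over $\pi$ yields $T^R_d(\nu,\mu)\le T^R_d(\mu,\nu)$, and the same argument with the roles reversed gives equality.

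The main work---and the place where the hypotheses really get used---is the triangle inequality, which is where Lemma~\ref{lem:gluing_lemma} does the heavy lifting. I would let $\pi_{12}^*\in\Pi(\mu_1,\mu_2)$ and $\pi_{23}^*\in\Pi(\mu_2,\mu_3)$ be optimal plans (they exist by the previous theorem) and glue them along their common $\mu_2$-marginal to produce a measure $\pi$ on $\D\times\D\times\D$ with those two-dimensional marginals. Its $(z_1,z_3)$-marginal $\pi_{13}$ then lies in $\Pi(\mu_1,\mu_3)$, so
\[
\d^R(\M_1,\M_3)\;\le\;\int_{\D^3} d^R_{\mu_1,\mu_3}(z_1,z_3)\,d\pi(z_1,z_2,z_3).
\]
Applying the pointwise triangle inequality (property~(6) of Theorem~\ref{thm:properties_of_d}) inside the integral, splitting the sum, and projecting back onto the $(z_1,z_2)$- and $(z_2,z_3)$-marginals decouples the right-hand side into $\d^R(\M_1,\M_2)+\d^R(\M_2,\M_3)$, as desired. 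The only delicate ingredient here is Lemma~\ref{lem:gluing_lemma} itself; granting it, the rest is measure-theoretic bookkeeping, relying only on the linearity of the integral and the pointwise estimates already in hand.
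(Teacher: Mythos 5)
Your proposal is correct and follows essentially the same strategy as the paper: non-negativity and symmetry lift directly from properties (3) and (2) of Theorem~\ref{thm:properties_of_d}, and the triangle inequality goes via the gluing lemma applied to (near-)optimal couplings, exactly as in Villani's argument. The one place where the paper does a little more is reflexivity: rather than showing only $T^R_d(\mu,\mu)=0$ via the diagonal plan, it proves $T^R_d(\mu,m_*\mu)=0$ for \emph{every} $m\in\Md$ by using the plan concentrated on the graph of $m$ and property (5) with that $m$; this stronger form is what guarantees $\d^R$ vanishes between two different conformal representations $\mu$ and $m_*\mu$ of the \emph{same} surface, so it is the version one actually needs for $\d^R$ to be meaningfully defined on surfaces rather than on fixed pairs $(\D,\mu)$. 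A second cosmetic difference: you glue optimal plans (justified by the existence theorem just proved), while the paper glues arbitrary plans and passes to the infimum at the end; both are valid.
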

\begin{proof}

The symmetry of $d^R_{\mu,\nu}$ implies symmetry for $T^R_d$, by the
following argument:
\begin{align*}
T^R_d(\mu,\nu) &=
\mathop{\inf}_{\pi \in \Pi(\mu,\nu)}\int_{\D \times \D}d^R_{\mu,\nu}(z,w)d\pi(z,w) = \mathop{\inf}_{\pi \in \Pi(\mu,\nu)}\int_{\D \times \D}d^R_{\nu,\mu}(w,z)d\pi(z,w) \\
&=
\mathop{\inf}_{\pi \in \Pi(\mu,\nu)}\int_{\D \times \D}d^R_{\nu,\mu}(w,z)d\widetilde{\pi}(w,z),  ~~~~~~~\mbox{ where we have set }~\widetilde{\pi}(w,z)=\pi(z,w)\\
&= T^R_d(\nu,\mu)~.  ~~~~~~~~~(\mbox{ use that }\pi\in\Pi(\mu,\nu)
\Leftrightarrow \widetilde{\pi}\in\Pi(\nu,\mu))
\end{align*}

%+++++++++++++++++

The non-negativity of $d^R_{\mu,\nu}(\cdot,\cdot)$ automatically
implies $T^R_d(\mu,\nu) \geq 0$.

%+++++++++++++++++

Next we show that, for any \Mbs transformation $m$,
$T^R_d(\mu,m_*\mu)=0$. To see this, pick the transportation plan
$\pi \in \Pi(\mu,m_*\mu)$ defined by
$$
\int_{\D\times \D} f(z,w) d\pi(z,w) = \int_{\D} f(z,m(z))
\mu(z)\,d\vol_H(z).
$$
On the one hand $\pi \in \Pi(\mu,m_*\mu)$, since
$$\int_{A \times \D} d\pi(z,w) = \int_{A} \mu(z)d\vol_H(z),$$
and
\begin{align*}
\int_{\D\times B} d\pi(z,w) &=
\int_{\D\times \D} \chi_B(w) d\pi(z,w) \\
&=\int_{\D} \chi_B(m(z)) \mu(z)d\vol_H(z) = \int_{\D} \chi_B(w)
\mu_*(w)d\vol_H(w),
\end{align*}
where we used the change of variables $w=m(z)$ in the last step.
Furthermore, $\pi(z,w)$ is concentrated on the graph of $m$, i.e. on
$\set{(z,m(z)) \ ; \ z\in \D} \subset \D \times \D$. Since
$d^R_{\mu,m_*\mu}(z,m(z)) = 0$ for all $z \in \D$ we obtain
therefore $T_d(\mu,m_*\mu) \leq \int_{\D\times \D}
d^R_{\mu,m_*\mu}(z,w)d\pi(z,w)  = 0$.

%+++++++++++++++++

Finally, we prove the triangle inequality $T^R_d(\mu_1,\mu_3) \leq
T^R_d(\mu_1,\mu_2) + T^R_d(\mu_2,\mu_3)$ . To this end we follow the
argument in the proof given in \cite{Villani:2003} (page 208). This
is where we invoke the gluing Lemma stated above.

We start by picking  arbitrary transportation plans $\pi_{12} \in
\Pi(\mu_1,\mu_2)$ and $\pi_{23} \in \Pi(\mu_2,\mu_3)$. By Lemma
\ref{lem:gluing_lemma} there exists a probability measure $\pi$ on
$\D\times \D \times \D$ with marginals $\pi_{12}$ and $\pi_{23}$.
Denote by $\pi_{13}$ its third marginal, that is
$$\int_{z_2\in \D}d\pi(z_1,z_2,z_3) = d\pi_{13}(z_1,z_3).$$
Then
\begin{align*}
T^R_d(\mu_1,\mu_3) &\leq \int_{\D \times \D} d^R_{\mu_1,\mu_3}(z_1,z_3)d\pi_{13}(z_1,z_3)  = \int_{\D \times \D \times \D} d^R_{\mu_1,\mu_3}(z_1,z_3)d\pi(z_1,z_2,z_3) \\
&\leq \int_{\D \times \D \times \D} \Big( d^R_{\mu_1,\mu_2}(z_1,z_2) + d^R_{\mu_2,\mu_3}(z_2,z_3) \Big )d\pi(z_1,z_2,z_3) \\
&\leq \int_{\D \times \D \times \D} d^R_{\mu_1,\mu_2}(z_1,z_2)
d\pi(z_1,z_2,z_3) +
\int_{\D \times \D \times \D} d^R_{\mu_2,\mu_3}(z_2,z_3) d\pi(z_1,z_2,z_3) \\
&\leq \int_{\D \times \D } d^R_{\mu_1,\mu_2}(z_1,z_2)
d\pi_{12}(z_1,z_2) + \int_{\D \times \D } d^R_{\mu_2,\mu_3}(z_2,z_3)
d\pi_{23}(z_2,z_3),
\end{align*}
where we used the triangle-inequality for $d^R_{\mu,\nu}$ listed in
(Theorem \ref{thm:properties_of_d}). Since we can choose $\pi_{12}$
and $\pi_{23}$ to achieve arbitrary close values to the infimum in
eq.~(\ref{e:generalized_Kantorovich_transportation}) the triangle
inequality follows.
\end{proof}

To qualify as a metric rather than a semi-metric, $\d^R$ (or
$T^R_d$) should be able to distinguish from each other any two
surfaces (or measures) that are not ``identical'', that is
isometric. To prove that they can do so, we need an extra
assumption: we shall require that the surfaces we consider have no
self-isometries. More precisely, we require that each surface $\M$
that we consider satisfies the following definition:

\begin{defn}
A surface $\M$ is said to be a singly
$\varrho\mbox{-}_{\mbox{\tiny{H}}}\mbox{fittable}$ (where $\varrho
\in \R,$ $\varrho \neq 0$) if, for all $R > \varrho$, and all
$z\in\D$, there is no other M\"{o}bius transformation $m$ other than
the identity for which $$\int_{\Omega_{z,R}} \,
|\mu(z)-\mu(m(z))|\,d\vol_H(z)=0.$$
\end{defn}

\begin{rem}
This definition can also be read as follows: $\M$ is singly
$\varrho\mbox{-}_{\mbox{\tiny{H}}}\mbox{fittable}$ if and only if,
for all $R>\varrho$, any two conformal factors $\mu_1$ and $\mu_2$
for $\M$ satisfy:
\begin{enumerate}
\item
For all $z\in \D$ there exists a unique minimum to the function $w
\mapsto d^R_{\mu_1,\mu_2}(z,w)$.
\item
For all pairs $(z,w)\in \D \times \D$ that achieve this minimum
there exists a unique M\"{o}bius transformation for which the
integral in {\rm(\ref{e:d_mu,nu(z,w)_def})} vanishes (with $\mu_1$
in the role of $\mu$, and $\mu_2$ in that of $\nu$).
\end{enumerate}
\end{rem}
Essentially, this definition requires that, from some sufficiently
large (hyperbolic) scale onwards, there are no isometric pieces
within $(\D,\mu)$ (or $(\D,\nu)$).

%This means that the surfaces we consider do not have self-isometires
%by M\"{o}bius transformations on hyperbolic disks $\Omega_{z,R}$,
%for all $z \in \D$. This is what we need in order to prove that our
%metric distinguishes between every pair of such surfaces. Since near
%the boundary $\Omega_{z,R}$ becomes infinitely small in the
%surface's metric, this might seem (from the practical point of view)
%as a somewhat strong assumption. Therefore, we later relax this
%requirement to ask that the surfaces do not have isometric
%\emph{patches} (that is open, path-connected sets) of size larger
%than some lower bound $A>0$. The ``price'' of this relaxation would
%be that we will neglect a small boundary layer. This means that we
%will able to distinguish between two surfaces satisfying this
%property if the are not isometric in most of their area, excluding
%small layer near the boundary.

\begin{figure}[t]
  % Requires \usepackage{graphicx}
\hspace*{.3 in}
\begin{minipage}{2.8 in}
  \caption{Illustration of the proof of Theorem \ref{thm:identity_of_indiscernibles}}\label{fig:for_proof_identity_of_indiscernibles}
\end{minipage}
\begin{minipage}{3 in}
%\hspace*{.7 in} \includegraphics[width=2 in]{figures/illustration.png}
%\vspace*{-1.2 in}
\hspace*{.7 in}
\includegraphics[width=0.8\columnwidth]{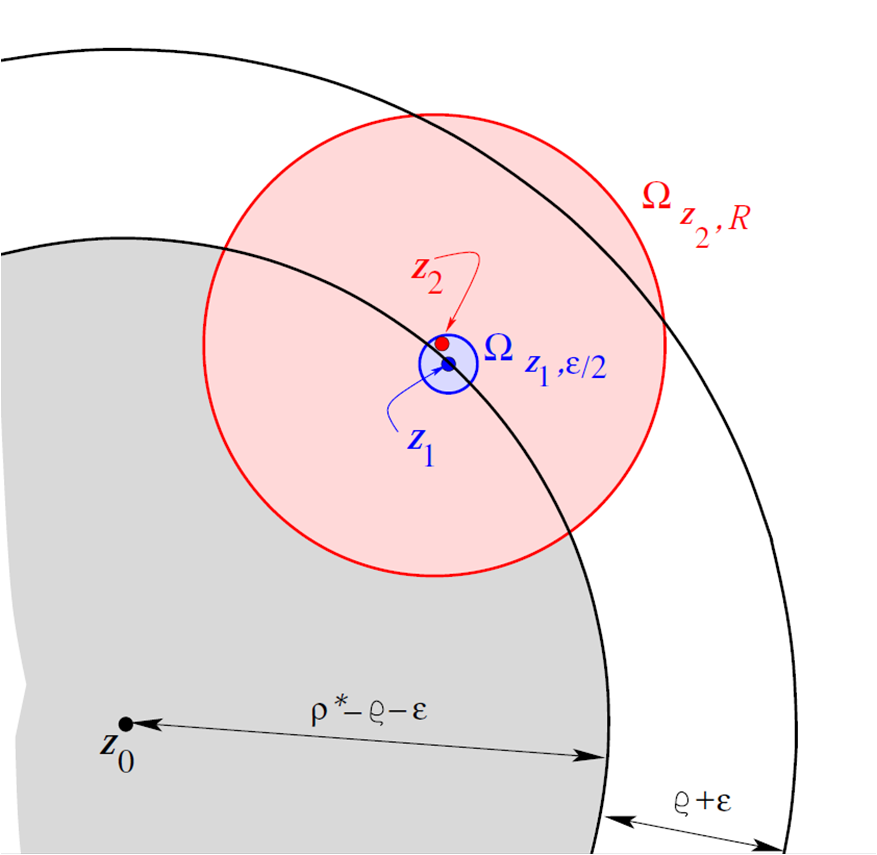}
\end{minipage}
%\hspace*{.5 in}
\
\end{figure}

We start with a lemma, and then prove the main result of this
subsection.
\begin{lem}\label{lem:in_every_disk_z_w_d(z,w)=0}
Let $\pi \in \Pi(\mu,\nu)$ be such that $\int_{\D \times
\D}\,d^R_{\mu,\nu}(z,w)\,d\pi(z,w)=0$. Then, for all $z_0 \in \D$
and $\delta >0$, there exists at least one point $z \in
\Omega_{z_0,\delta}$ such that $d^R_{\mu,\nu}(z,w)=0$ for some $w
\in \D$.
\end{lem}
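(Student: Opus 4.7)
The plan is to exploit that $d^R_{\mu,\nu}$ is continuous (by Theorem \ref{thm:continuity_of_d^R_mu,nu}) and non-negative (Property (3) of Theorem \ref{thm:properties_of_d}), so that a vanishing $\pi$-integral forces $\pi$ to be concentrated on the zero-locus of $d^R_{\mu,\nu}$. Explicitly, I would introduce the closed set
$$Z \;=\; \{(z,w) \in \D \times \D \,;\, d^R_{\mu,\nu}(z,w) = 0\},$$
and observe that, for every $\eps>0$, the set $Z^c_\eps := \{(z,w) \,;\, d^R_{\mu,\nu}(z,w) \geq \eps\}$ satisfies $\eps\,\pi(Z^c_\eps) \leq \int_{\D\times\D} d^R_{\mu,\nu}\,d\pi = 0$. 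Hence $\pi(Z^c_\eps)=0$ for every $\eps>0$, which yields $\pi(Z^c)=0$; i.e., $\pi$ is concentrated on $Z$.

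Next I would argue by contradiction. Suppose the conclusion fails: there exist $z_0 \in \D$ and $\delta>0$ such that no $z \in \Omega_{z_0,\delta}$ satisfies $d^R_{\mu,\nu}(z,w)=0$ for any $w \in \D$. Then $Z$ is disjoint from $\Omega_{z_0,\delta}\times\D$, so
$$\pi(\Omega_{z_0,\delta}\times\D) \;\leq\; \pi(Z^c) \;=\; 0.$$
On the other hand, the first-marginal condition for $\pi \in \Pi(\mu,\nu)$ gives
$$\pi(\Omega_{z_0,\delta}\times\D) \;=\; \int_{\Omega_{z_0,\delta}} \mu(z)\,d\vol_H(z),$$
and this integral is strictly positive because $\Omega_{z_0,\delta}$ has compact closure in $\D$ and $\mu$ is continuous and strictly positive there (being the hyperbolic conformal factor of a smooth Riemannian metric, cf.\ the standing assumption $\widetilde{\mu}>0$). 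This contradiction completes the proof, and any point $(z,w) \in Z \cap (\Omega_{z_0,\delta}\times\D)$ furnishes the desired $z$.

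The proof is essentially a ``mass cannot hide'' argument, and the only point that deserves care is ensuring that $\int_{\Omega_{z_0,\delta}} \mu\,d\vol_H > 0$; this is immediate as long as $\Omega_{z_0,\delta}$ is a bounded-radius hyperbolic disk (so that it stays uniformly away from $\partial\D$, where $\mu$ would vanish via the factor $(1-|z|^2)^2$). Everything else is an essentially formal consequence of the non-negativity and continuity of $d^R_{\mu,\nu}$ together with the marginal constraint on $\pi$.
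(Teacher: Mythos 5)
Your proof is correct and is essentially the paper's argument: both proceed by contradiction, using the marginal identity $\pi(\Omega_{z_0,\delta}\times\D)=\int_{\Omega_{z_0,\delta}}\mu\,d\vol_H>0$ and the non-negativity of $d^R_{\mu,\nu}$ to deduce that the integral over $\Omega_{z_0,\delta}\times\D$ would have to be strictly positive, contradicting the vanishing of the full integral. Your Chebyshev-type step (showing $\pi$ is concentrated on the zero set of $d^R_{\mu,\nu}$ before localizing) is a slightly more explicit packaging of the same measure-theoretic fact the paper uses implicitly when it passes from ``$\Omega(z_0,\delta)\times\D$ meets the support of $\pi$'' to ``the integral over that set is positive,'' but it is not a different route.
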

\begin{proof}
By contradiction: assume that there exists a disk
$\Omega_{z_0,\delta}$ such that $d^R_{\mu,\nu}(z,w) >0$ for all
$z\in \Omega_{z_0,\delta}$ and all $w \in \D$. Since
$$
\int_{\Omega(z_0,\delta) \times \D}\,d\pi(z,w) =
\int_{\Omega(z_0,\delta)}\mu(z)\, d\vol_H(z)>0~,
$$
the set $\Omega(z_0,\delta) \times \D$ contains some of the support
of $\pi$. It follows that
$$
\int_{\Omega(z_0,\delta)\times \D} d^R_{\mu,\nu}(z,w) d\pi(z,w)>0~,
$$
which contradicts
$$
 \int_{\Omega(z_0,\delta) \times \D}d^R_{\mu,\nu}(z,w)d\pi(z,w)
\le \int_{\D \times \D}d^R_{\mu,\nu}(z,w)d\pi(z,w) = 0~.
$$
\end{proof}
\begin{thm}\label{thm:identity_of_indiscernibles}
Suppose that $\M$ and $\N$ are two surfaces that are singly
$\varrho\mbox{-}_{\mbox{\tiny{H}}}$fittable. If $\d^R(\M,\N)=0$ for
some $R > \varrho$, then there exists a \Mbs transformation $m \in
\Md$ that is a global isometry between $\M=(\D,\mu)$ and
$\N=(\D,\nu)$ (where $\mu$ and $\nu$ are conformal factors of $\M$
and $\N$, respectively).
\end{thm}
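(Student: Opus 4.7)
The plan is to exploit the optimal transport plan $\pi^*$ realizing $\d^R(\M,\N) = T^R_d(\mu,\nu) = 0$, together with the singly $\varrho\mbox{-}_{\mbox{\tiny{H}}}$fittable hypothesis on both surfaces, to construct a \emph{single} global M\"obius transformation $m\in\Md$ with $\nu=m_*\mu$. First I would invoke Lemma \ref{lem:in_every_disk_z_w_d(z,w)=0}: the set
\[
S := \{z \in \D\,;\,\exists\,w \in \D \text{ with } d^R_{\mu,\nu}(z,w)=0\}
\]
is dense in $\D$. For each $z\in S$, fix a matching $w$ and a minimizing $m_z\in M_{D,z,w}$ realizing the infimum in (\ref{e:d_mu,nu(z,w)_def}); existence is guaranteed by Lemma \ref{lem:auxiliary} and compactness of $S_1$. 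Since $\mu,\nu$ are continuous, the identity $\int_{\Omega_{z,R}}|\mu(z')-\nu(m_z(z'))|\,d\vol_H(z')=0$ upgrades to the \emph{pointwise} equality $\mu(z')=\nu(m_z(z'))$ on all of $\Omega_{z,R}$.

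Next I would establish that $m_z$ is locally constant in $z\in S$. Take $z_1,z_2 \in S$ with hyperbolic distance $h<2(R-\varrho)$ and let $z_0$ be the hyperbolic midpoint of the geodesic segment joining them. By the triangle inequality $\Omega_{z_0,R'}\subset\Omega_{z_1,R}\cap\Omega_{z_2,R}$ for every $R'<R-h/2$, and by the choice of $h$ we can arrange $R'>\varrho$. On this common ball $\nu\circ m_{z_1}=\mu=\nu\circ m_{z_2}$, so
\[
\nu=\nu\circ(m_{z_2}\circ m_{z_1}^{-1}) \quad\text{on}\quad m_{z_1}(\Omega_{z_0,R'}),
\]
and Lemma \ref{lem:m(omega_z)=omega_w} identifies this set with $\Omega_{m_{z_1}(z_0),R'}$. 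Since $\N$ is singly $\varrho\mbox{-}_{\mbox{\tiny{H}}}$fittable and $R'>\varrho$, the only M\"obius map satisfying this vanishing-integral relation is the identity, forcing $m_{z_1}=m_{z_2}$. The very same overlap argument with $z_1=z_2$ also establishes the uniqueness of $m_z$ used in the previous paragraph.

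Finally, connectedness of $\D$ and density of $S$ let me join any two points of $S$ by a finite chain of $S$-points whose successive hyperbolic distances are below $2(R-\varrho)$; local constancy propagates along the chain, so $m_z\equiv m$ is globally constant on $S$. For arbitrary $z^*\in\D$ and any $\delta<R$, density produces $z\in S$ within hyperbolic distance $\delta$ of $z^*$, hence $z^*\in\Omega_{z,R}$, and paragraph one gives $\mu(z^*)=\nu(m(z^*))$. Thus $\mu\equiv\nu\circ m$ on $\D$, i.e.\ $\nu=m_*\mu$, so $m$ is the desired global M\"obius isometry between $(\D,\mu)$ and $(\D,\nu)$. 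The main obstacle is the overlap-and-glue step: one has to convert the vanishing integral of $|\mu-\nu\circ m_z|$ into a pointwise identity on a ball of hyperbolic radius exceeding $\varrho$ before the singly fittable hypothesis on $\N$ can be invoked; this is precisely where the strict inequality $R>\varrho$ is needed (to leave room for the intermediate radius $R'$) and where continuity of the conformal factors is essential.
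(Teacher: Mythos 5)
Your proof is correct, and it takes a genuinely different route from the paper's. The paper fixes a single pair $(z_0,w_0)$ with $d^R_{\mu,\nu}(z_0,w_0)=0$, defines $\rho^* = \sup\{\rho : d^\rho_{\mu,\nu}(z_0,w_0)=0\}$, and argues by contradiction that $\rho^*=\infty$: assuming $\rho^*<\infty$, it covers an annulus near $\partial\Omega_{z_0,\rho^*}$ with balls $\Omega_{z_2,R}$ for points $z_2$ supplied by Lemma~\ref{lem:in_every_disk_z_w_d(z,w)=0}, uses singly fittability of $\N$ to match each local M\"obius transformation $m'$ to $m_0$, and thereby extends the identity $\mu=\nu\circ m_0$ past radius $\rho^*$, a contradiction. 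Your argument instead works with the full dense set $S$ of zero-distance points, shows the assignment $z\mapsto m_z$ is locally constant on $S$ (via overlap on $\Omega_{z_0,R'}$ with $\varrho<R'<R-h/2$, and singly fittability of $\N$), propagates constancy along chains using connectedness of $\D$, and then extends the pointwise identity to all of $\D$ by density. The paper's ``radial expansion from one seed'' is more self-contained but requires careful bookkeeping of the radii $\eps=(R-\varrho)/2$, $\rho^*-\varrho-\eps$, $\varrho+\eps/4$, etc.; your ``germ continuation'' argument is arguably cleaner conceptually and separates the three issues (local pointwise equality, uniqueness/constancy of the M\"obius germ, global propagation) more transparently. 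Both use only the singly-fittable hypothesis on $\N$ (never on $\M$), which is consistent with the paper's proof as well.
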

\begin{proof}
When $\d^R(\M,\N)=0$, there exists (see \cite{Villani:2003}) $\pi
\in \Pi(\mu,\nu)$ such that
$$
\int_{\D \times \D}d^R_{\mu,\nu}(z,w)d\pi(z,w)=0.
$$

Next, pick an arbitrary point $z_0 \in \D$ such that, for some $w_0
\in \D$, we have $d^R_{\mu,\nu}(z_0,w_0)=0$. (The existence of such
a pair is guaranteed by Lemma \ref{lem:in_every_disk_z_w_d(z,w)=0}.)
This implies that there exists a unique M\"{o}bius transformation
$m_0 \in \Md$ that takes $z_0$ to $w_0$ and that satisfies
$\nu(m_0(z))=\mu(z)$ for all $z \in \Omega_{z_0,R}$. We define
$$
\rho^* = \sup \{\rho \,;\, d^\rho_{\mu,\nu}(z_0,w_0)=0 \};
$$
clearly $\rho^* \geq R$. The theorem will be proved if we show that
$\rho^*= \infty$. We shall do this by contradiction, i.e. we assume
$\rho^* < \infty$, and then derive a contradiction.
%the contradiction that for some $\eps >0$, $d^{\rho+\eps}_{\mu,\nu}(z_0,w_0)=0$.

So let's assume $\rho^* < \infty$. Consider $\Omega_{z_0,\rho^*}$,
the hyperbolic disk around $z_0$ of radius $\rho^*$. (See Figure
\ref{fig:for_proof_identity_of_indiscernibles} for illustration.)
Set $\eps = (R - \varrho)/2$, and consider the points on the
hyperbolic circle $C=\partial \Omega_{z_0, \rho^*-\varrho-\eps}$.
For every $z_1 \in C$, consider the hyperbolic disk
$\Omega_{z_1,\eps/2}$; by  Lemma
\ref{lem:in_every_disk_z_w_d(z,w)=0} there exists a point $z_2$ in
this disk and a corresponding point $w_2 \in \D$ such that
$d^R_{\mu,\nu}(z_2,w_2)=0$, i.e. such that
\[
\int_{\Omega_{z_2,R}}\,|\mu(z)-m'^*\nu(z)|^2\,d\vol_H(z) \,=\,0~
\]
for some \Mbs transformation $m'$ that maps $z_2$ to $w_2$; in
particular, we have that
\begin{equation}
\mu(z)=\nu(m'(z))~~\mbox{ for all }~ z \in \Omega_{z_2,R}~.
\label{e:ingrid:mprime}
\end{equation}
The hyperbolic distance from $z_2$ to $\partial \Omega_{z_0,\rho^*}$
is at least $\varrho+\eps/2$.
% and at most  $\varrho+3\eps/2$.
It follows that the hyperbolic disk $\Omega_{z_2,\varrho+\eps/4}$ is
completely contained in $\Omega_{z_0,\rho^*}$; since
$\mu(z)=\nu(m_0(z))$ for all $z \in \Omega_{z_0,\rho^*}$, this must
therefore hold, in particular, for all $z \in
\Omega_{z_2,\varrho+\eps/4}$. Since
$\Omega_{z_2,\varrho+\eps/4}\subset \Omega_{z_2,R}$, we also have
$\mu(z)=\nu(m'(z))$ for all $z \in \Omega_{z_2,\varrho+\eps/4}$, by
(\ref{e:ingrid:mprime}). This implies $\nu(w)=\nu(m_0\circ
(m')^{-1}(w))$ for all $w \in \Omega_{w_2,\varrho+\eps/4}$. Because
$\N$ is singly $\varrho\mbox{-}_{\mbox{\tiny{H}}}$fittable, it
follows that $m_0\circ (m')^{-1}$ must be the identity, or $m_0=
m'$. Combining this with (\ref{e:ingrid:mprime}), we have thus shown
that $\mu(z)=\nu(m_0(z))$ for all $z \in \Omega_{z_2,R}$.

Since the distance between $z_2$ and $z_1$ is at most $\eps/2$, we
also have
\begin{equation*}
\Omega_{z_2,R} \supset \Omega_{z_1,R - \eps/2} = \Omega_{z_1,\varrho
+ 3\eps/2}~. \label{e:ingrid:supset}
\end{equation*}

This implies that if we select such a point $z_2(z_1)$ for each
$z_1\in C$, then $\Omega_{z_0, \rho^* - \varrho-\eps}\cup
\left(\,\cup_{z_1 \in C}\,\Omega_{z_2(z_1),R}\right)$ covers the
open disk $\Omega_{z_0,\rho^*+\eps/2}$. By our earlier argument,
$\mu(z)=\nu(m_0(z))$ for all $z$ in each of the
$\Omega_{z_2(z_1),R}$; since the same is true on $\Omega_{z_0,
\rho^* - \varrho-\eps}$, it follows that $\mu(z)=\nu(m_0(z))$ for
all $z$ in $\Omega_{z_0,\rho^*+\eps/2}$. This contradicts the
definition of $\rho^*$ as the supremum of all radii for which this
was true; it follows that our initial assumption, that $\rho^*$ is
finite, cannot be true, completing the proof.
\end{proof}

For $(\D,\mu)$ to be singly
$\varrho\mbox{-}_{\mbox{\tiny{H}}}$fittable, no two hyperbolic disks
$\Omega_{z,R}$, $\Omega_{w,R}$ (where $w$ can equal $z$) can be
isometric via a M\"{o}bius transformation $m$, if $R > \varrho$,
except if $m=Id$. However, if $z$ is close (in the Euclidean sense)
to the boundary of $\D$, the hyperbolic disk $\Omega_{z,R}$ is very
small in the Euclidean sense, and corresponds to a very small piece
(near the boundary) of $\M$. This means that single
$\varrho\mbox{-}_{\mbox{\tiny{H}}}$fittability imposes restrictions
in increasingly small scales near the boundary of $\M$; from a
practical point of view, this is hard to check, and in many
applications, the behavior of $\M$ close to its boundary is
irrelevant. For this reason, we also formulate the following
relaxation of the results above.

\begin{defn}
A surface $\M$ is said to be a singly $A\mbox{-}_{\!_{\M}}$fittable
(where $A> 0$) if there are no patches (i.e. open, path-connected
sets) in $\M$ of area larger than $A$ that are isometric, with
respect to the metric on $\M$.
\end{defn}

If a surface is singly $A\mbox{-}_{\!_{\M}}$fittable, then it is
obviously also $A'\mbox{-}_{\!_{\M}}$fittable for all $A' \geq A$;
the condition of being $A\mbox{-}_{\!_{\M}}$fittable becomes more
restrictive as $A$ decreases. The following theorem states that two
singly $A\mbox{-}_{\!_{\M}}$fittable surfaces at zero
$\d^R$-distance from each other must necessarily be isometric, up to
some small boundary layer.

\begin{thm}
Consider two surfaces $\M$ and $\N$, with corresponding conformal
factors $\mu$ and $\nu$ on $\D$, and suppose $\d^R(\M,\N)=0$ for
some $R>0$. Then the following holds: for arbitrarily large
$\rho>0$, there exist a \Mbs transformation $m \in M_D$ and a value
$A>0$ such that if $\M$ and $\N$ are singly
$A\mbox{-}_{\!_{\M}}$fittable then $\mu(m(z))=\nu(z)$, for all $z
\in \Omega_{0,\rho}$.
\end{thm}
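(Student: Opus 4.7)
The plan is to follow the blueprint of Theorem \ref{thm:identity_of_indiscernibles} step-by-step, replacing the hyperbolic-scale fittability condition with the area-scale one. The bridge between the two is that, on any precompact subset $K\subset\D$, the continuous positive conformal factor $\mu$ attains a strict positive lower bound $\mu_{\min}>0$, so hyperbolic volume and surface area are comparable on $K$ up to a universal multiplicative constant. Accordingly, the value of $A$ must be chosen a posteriori, once the compact set $K$ on which the continuation argument is carried out has been selected.

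As in the proof of Theorem \ref{thm:identity_of_indiscernibles}, the hypothesis $\d^R(\M,\N)=0$ furnishes an optimal $\pi^*\in\Pi(\mu,\nu)$ with $\int d^R_{\mu,\nu}\,d\pi^*=0$, and Lemma \ref{lem:in_every_disk_z_w_d(z,w)=0} then produces a point $z_0$ arbitrarily close to $0$ together with $w_0\in\D$ such that $d^R_{\mu,\nu}(z_0,w_0)=0$, giving a M\"obius transformation $m\in M_{D,z_0,w_0}$ for which $\mu(z)=\nu(m(z))$ on $\Omega_{z_0,R}$. I would then set $\rho':=\rho+d_H(0,w_0)+1$, choose $K:=\bbar{\Omega_{z_0,\rho'+2R}}\subset\D$ with $\mu\geq\mu_{\min}>0$ on $K$, and define
\[
\rho^* := \sup\{\tilde\rho\geq R:\ \mu(z)=\nu(m(z)) \text{ for all } z\in \Omega_{z_0,\tilde\rho}\}.
\]
The goal is to show $\rho^* > \rho'$; then $\Omega_{0,\rho}\subset\Omega_{w_0,\rho'}=m(\Omega_{z_0,\rho'})\subset m(\Omega_{z_0,\rho^*})$, and the substitution $w=m(z)$ delivers $\mu(m^{-1}(w))=\nu(w)$ on $\Omega_{0,\rho}$, which is the theorem's conclusion after renaming $m^{-1}\mapsto m$.

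For the contradiction, suppose $\rho^* \leq \rho'$, and consider $C := \partial\Omega_{z_0,\rho^*-R/2}$. For each $z_1\in C$, Lemma \ref{lem:in_every_disk_z_w_d(z,w)=0} applied to a tiny disk around $z_1$ produces $z_2=z_2(z_1)$ within hyperbolic distance $\eta \ll R$ of $z_1$ and a $w_2\in\D$ with $d^R_{\mu,\nu}(z_2,w_2)=0$, yielding a M\"obius $m'$ with $\mu(z)=\nu(m'(z))$ on $\Omega_{z_2,R}$. On the overlap $T := \Omega_{z_2,R}\cap\Omega_{z_0,\rho^*}$, both $m$ and $m'$ pull $\nu$ back to $\mu$, so $m\circ(m')^{-1}$ is a M\"obius isometry of $(\D,\nu)$ carrying the patch $m'(T)$ onto the patch $m(T)$. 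A short triangle-inequality check shows $\Omega_{z_2,R/4}\subset T$, and since hyperbolic volume is M\"obius-invariant, $\vol_H(\Omega_{z_2,R/4})$ equals a universal constant $V_0>0$ depending only on $R$. The change-of-variables formula then gives
\[
\mathrm{area}_\N\bigl(m'(T)\bigr) = \int_T \mu\,d\vol_H \geq \mu_{\min} V_0 =: 2A,
\]
and with $A$ so defined, the assumed $A$-fittability of $\N$ forces $m\circ(m')^{-1}=\mathrm{Id}$, whence $m'=m$ and therefore $\mu=\nu\circ m$ on the whole of $\Omega_{z_2,R}$. As $z_1$ varies over $C$, the sets $\Omega_{z_2(z_1),R}$ together with $\Omega_{z_0,\rho^*-R/2}$ cover a strictly larger disk $\Omega_{z_0,\rho^*+\delta}$, contradicting the maximality of $\rho^*$.

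The main obstacle is making the choice of $A$ uniform in the auxiliary points $z_2$ generated by the continuation; this is precisely what the compact enlargement $K$, with its uniform lower bound $\mu_{\min}$, supplies. A secondary bookkeeping point is that $K$ (and hence $A$) depends on $w_0$, which in turn depends on $\mu$ and $\nu$; this is allowed since the theorem permits $A$ to depend on the surfaces.
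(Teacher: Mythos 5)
Your proposal is correct and follows essentially the same blueprint as the paper's proof: locate $(z_0,w_0)$ via Lemma \ref{lem:in_every_disk_z_w_d(z,w)=0}, define $\rho^*$ as the maximal radius of agreement, run a boundary-circle continuation argument, and use the fittability hypothesis to force the local M\"obius transformations to coincide. The only cosmetic difference is in how the threshold $A$ is guaranteed to be exceeded along the continuation: you fix a compact enlargement $K$ and use $\min_K \mu$, whereas the paper packages the same lower-bound idea into the auxiliary sets $\OO_{A,r}$ and arranges $\Omega_{0,\rho}\subset\OO_{A,r}$ before invoking the lemma; both routes deliver the same uniform area bound on the overlap patch.
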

\begin{proof}
Part of the proof follows the same lines as for Theorem
\ref{thm:identity_of_indiscernibles}. We highlight here only the new
elements needed for this proof.

First, note that, for arbitrary $r>0$ and $z_0 \in \D$,
\begin{equation}\label{e:lower_bound_for_patch_area}
    \Vol_\M(\Omega_{z_0,r}) = \int_{\Omega_{z_0,r}}\mu(z)d\vol_H(z)
    \geq \Vol_H(\Omega_{z_0,r})\left[\min_{z\in\Omega_{z_0,r}}\mu(z)\right] =
    \Vol_H(\Omega_{0,r})\left[\min_{z\in\Omega_{z_0,r}}\mu(z)\right].
\end{equation}
This motivates the definition of the sets $\OO_{A,r}$,
\begin{equation}\label{e:O_big_area_set}
    \OO_{A,r} = \set{ z\in \D \ \mid \ \min_{z'\in\Omega_{z,r}}\mu(z') >
    \frac{A}{\Vol_H(0,\Omega_{0,r})}
    };
\end{equation}
$A>0$ is still arbitrary at this point; its value will be set below.

Now pick $r<R$, and set $\eps=(R-r)/2$. Note that if $z \in
\OO_{A,r}$, then
$\Vol_{\M}(\Omega_{z,R})\geq\Vol_{\M}(\Omega_{z,r})>A $.

Since $\mu$ is bounded below by a strictly positive constant on each
$\Omega_{0,\rho'}$, we can pick, for arbitrarily large $\rho$, $A>0$
such that $\Omega_{0,\rho} \subset \OO_{A,r}$; for this it suffices
that $A$ exceed a threshold depending on $\rho$ and $r$. (Since
$\mu(z)\rightarrow 0$ as $z$ approaches the boundary of $\D$ in
Euclidean norm, we expect this threshold to tend towards $0$ as
$\rho \rightarrow \infty$.) We assume that $\Omega_{0,\rho} \subset
\OO_{A,r}$ in what follows.

Similar to the proof of Theorem
\ref{thm:identity_of_indiscernibles}, we invoke Lemma
\ref{lem:in_every_disk_z_w_d(z,w)=0} to infer the existence of
$z_0,w_0$ such that $z_0\in\Omega_{0,\eps/2}$  and
$d^R_{\mu,\nu}(z_0,w_0)=0$. We denote
$$
\rho^* = \sup \{r' \,;\, d^{r'}_{\mu,\nu}(z_0,w_0)=0 \};
$$
as before, there exists a \Mbs transformation $m$ such that
$\nu(m(z))=\mu(z)$ for all $z$ in $\Omega_{z_0,\rho^*}$. To complete
our proof it therefore suffices to show that $\rho^* \geq \rho +
\eps/2$, since $\Omega_{0,\rho}\subset \Omega_{z_0,\rho+\eps/2}$ .

Suppose the opposite is true, i.e. $\rho^* < \rho+\eps/2$. By the
same arguments as in the proof of Theorem
\ref{thm:identity_of_indiscernibles},  there exists, for each
$z_1\in \partial \Omega_{z_0,\rho^*-r-\eps}$, a point $z_2 \in
\Omega_{z_1,\eps/2}$ such that $d^R_{\mu,\nu}(z_2,w_2)=0$ for some
$w_2$. Since the hyperbolic distance between $z_2$ and $0$ is
bounded above by $\eps/2+\rho^*-r-\eps+\eps/2<\rho-r+\eps/2<\rho$,
$z_2 \in \Omega_{0,\rho} \subset \OO_{A,r}$, so that
$\Vol_{\M}(\Omega_{z_2,R})>A $. It then follows from the conditions
on $\M$ and $\N$ that $\nu(m(z))=\mu(z)$ for all z in
$\Omega_{z_0,\rho^*} \cup \Omega_{z_2,R} \supset
\Omega_{z_0,\rho^*}\cup \Omega_{z_1,r+3\eps/2}$. Repeating the
argument for all $z_1\in \partial \Omega_{z_0,\rho^*-r-\eps}$ shows
that $\nu(m(z))=\mu(z)$ can be extended to all $z \in
\Omega_{z_0,\rho^*+\eps/2}$, leading to a contradiction that
completes the proof.
\end{proof}

% ----------------------------------------------------------------
\section{Discretization and implementation}
\label{s:the_discrete_case_implementation} To transform the
theoretical framework constructed in the preceding sections into an
algorithm, we need to discretize the relevant continuous objects.
Our general plan is to recast the transportation
eq.~(\ref{e:generalized_Kantorovich_transportation}) as a linear
programming problem between discrete measures. This requires two approximation steps: \\
1) approximating the surface's Uniformization, and \\
2) discretizing the resulting continuous measures and finding the
optimal transport between the discrete measures.

To show how we do this, we first review a few basic notions such as
the representation of (approximations to) surfaces by faceted,
piecewise flat approximations, called {\em meshes}, and discrete
conformal mappings; the conventions we describe here are the same as
adopted in \cite{Lipman:2009:MVF}.

\subsection{Meshes, mid-edge meshes, and discrete conformal mapping}
Triangular (piecewise-linear) meshes are a popular choice for the
definition of discrete versions of smooth surfaces. We shall denote
a triangular mesh by the triple $M = (\V, \E, \F)$, where
$\V=\{v_i\}_{i=1}^m \subset \Real^3$ is the set of vertices,
$\E=\{e_{i,j}\}$ the set of edges, and $\F=\{f_{i,j,k}\}$  the set
of faces (oriented $i\rightarrow j \rightarrow k$). When dealing
with a second surface, we shall denote its mesh by $N$. We assume
our mesh is homeomorphic to a disk.

Next, we introduce ``conformal mappings'' of a mesh to the unit
disk. Natural candidates for discrete conformal mappings are not
immediately obvious. Since we are dealing with piecewise linear
surfaces, it might seem natural to select a continuous linear maps
that is piecewise affine, such that its restriction to each triangle
is a similarity transformation. A priori, a similarity map from a
triangular face to the disk has 4 degrees of freedom; requiring that
the image of each edge remain a shared part of the boundary of the
images of the faces abutting the edge, and that the map be
continuous when crossing this boundary, imposes 4 constraints for
each edge. This quick back of the envelope calculation thus allows
$4|\F|$ degrees of freedom for such a construction, with $4|\E|$
constraints. Since $3|\F|/2 \approx |\E|$ this problem is over
constrained, and a construction along these lines is not possible. A
different approach uses the notion of discrete harmonic and discrete
conjugate harmonic functions due to Pinkall and Polthier
\cite{Pinkall93,Polthier05} to define a discrete conformal mapping
on the mid-edge mesh (to be defined shortly). This relaxes the
problem to define a map via a similarity on each triangle that is
continuous through only \emph{one} point in each edge, namely the
mid point. This procedure was employed in \cite{Lipman:2009:MVF}; we
will summarize it here; for additional implementation details we
refer the interested reader (or programmer) to that paper, which
includes a pseudo-code.

The mid-edge mesh $\mM = (\mV, \mE, \mF)$ of a given mesh $M=(\V,
\E, \F)$ is defined as follows. For the vertices $\mv_r \in \mV$, we
pick the mid-points of the edges of the mesh $M$; we call these the
mid-edge points of $M$. There is thus a $\mv_r \in \mV$
corresponding to each edge $e_{i,j} \in \E$. If $\mv_s$ and $\mv_r$
are the mid-points of edges in $\E$ that share a vertex in $M$, then
there is an edge $\me_{s,r} \in \mE$ that connects them. It follows
that for each face $f_{i,j,k} \in \F$ we can define a corresponding
face $\mf_{r,s,t} \in \mF$, the vertices of which are the mid-edge
points of (the edges of) $f_{i,j,k}$; this face has the same
orientation as $f_{i,j,k}$.  Note that the mid-edge mesh is not a
manifold mesh, as illustrated by the mid-edge mesh in Figure
\ref{f:discrete_type_1}, shown together with its ``parent'' mesh: in
$\mM$ each edge ``belongs'' to only one face $\mF$, as opposed to a
manifold mesh, in which most edges (the edges on the boundary are
exceptions) function as a hinge between two faces. This ``lace''
structure makes a mid-edge mesh more flexible: it turns out that it
is possible to define a piecewise linear map that makes each face in
$\mF$ undergo a pure scaling (i.e. all its edges are shrunk or
extended by the same factor) and that simultaneously flattens the
whole mid-edge mesh. By extending this back to the original mesh, we
thus obtain a map from each triangular face to a similar triangle in
the plane; these individual similarities can be ``knitted together''
through the mid-edge points, which continue to coincide (unlike most
of the vertices of the original triangles).

\begin{figure}[ht]
\centering \setlength{\tabcolsep}{0.4cm}
\begin{tabular}{@{\hspace{0.0cm}}c@{\hspace{0.2cm}}c@{\hspace{0.0cm}}}
%c@{\hspace{0.0cm}}}
\includegraphics[width=0.4\columnwidth]{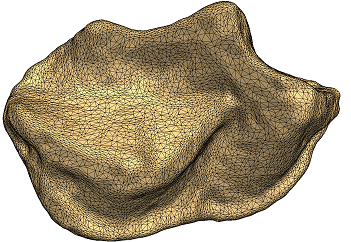}
&
\includegraphics[width=0.4\columnwidth]{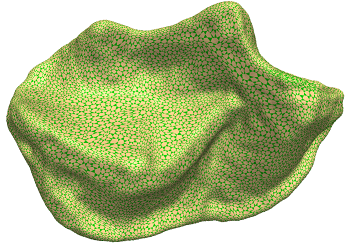} \\
Discrete mesh& Mid-edge mesh\\
\includegraphics[width=0.4\columnwidth]{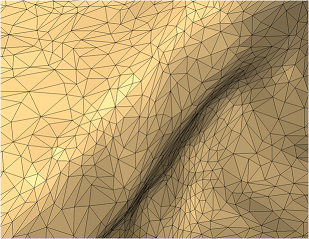}
&
\includegraphics[width=0.4\columnwidth]{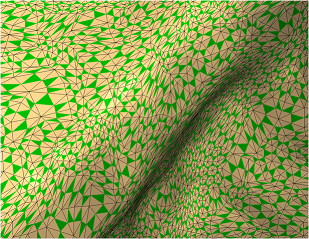} \\
Surface mesh zoom-in & Mid-edge mesh zoom-in
\end{tabular}
\caption{A mammalian tooth surface mesh, with the corresponding
mid-edge mesh.}\label{f:discrete_type_1}
\end{figure}

To determine the flattening map, we use the framework of discrete
harmonic and conjugate harmonic functions, first defined and studied
by Pinkall and Polthier \cite{Pinkall93,Polthier05} in the context
of discrete minimal surfaces. This framework was first adapted to
the present context in \cite{Lipman:2009:MVF}; this adaptation is
explained in some detail in Appendix B. The flattening map is
well-defined at the mid-edges $\mv_s$.
%and we accordingly
%denote by $\Phi:\mV \rightarrow \C$ the corresponding discrete conformal mapping
%of the mid-edges into the complex plane.
As shown in \cite{Lipman:2009:MVF} (see also Appendix B) the
boundary of the mesh gets mapped onto a region with a straight
horizontal slit (see Figure \ref{f:discrete_type_2}, where the
boundary points are marked in red). We can assume, without loss of
generality, that this slit coincides with the interval $[-2, 2]
\subset \C$, since it would suffice to shift and scale the whole
figure to make this happen. The holomorphic map $z=w+\frac{1}{w}$
maps the unit disk conformally to $\mathbb{C}\setminus [-2,2]$, with
the boundary of the disk mapped to the slit at $[-2,2]$; when the
inverse of this map is applied to our flattened mid-edge mesh, its
image will thus be a mid-edge mesh in the unit disk, with the
boundary of the disk corresponding to the boundary of our
(disk-like) surface. (See Figure \ref{f:discrete_type_2}.) We shall
denote by $\Phi:\mV \rightarrow \C$ the concatenation of these
different conformal and discrete-conformal maps, from the original
mid-edge mesh to the corresponding mid-edge mesh in the unit disk.

\begin{figure}[ht]
\centering \setlength{\tabcolsep}{0.4cm}
\begin{tabular}{@{\hspace{0.0cm}}c@{\hspace{0.0cm}}c@{\hspace{0.0cm}}c@{\hspace{0.0cm}}}
\includegraphics[width=0.2\columnwidth]{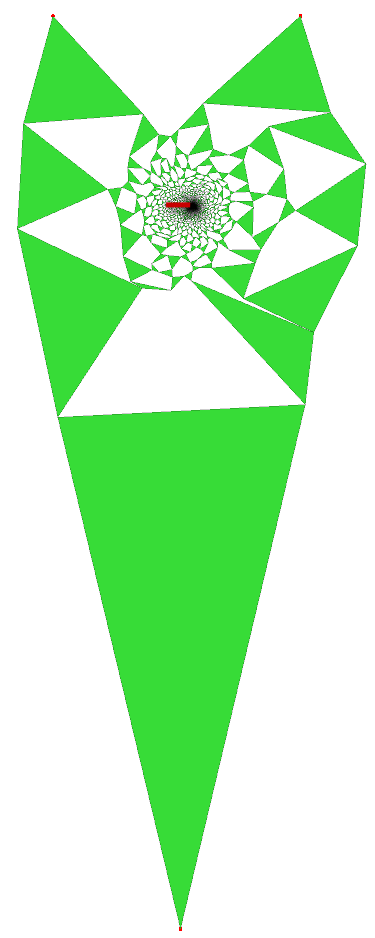} &
\includegraphics[width=0.4\columnwidth]{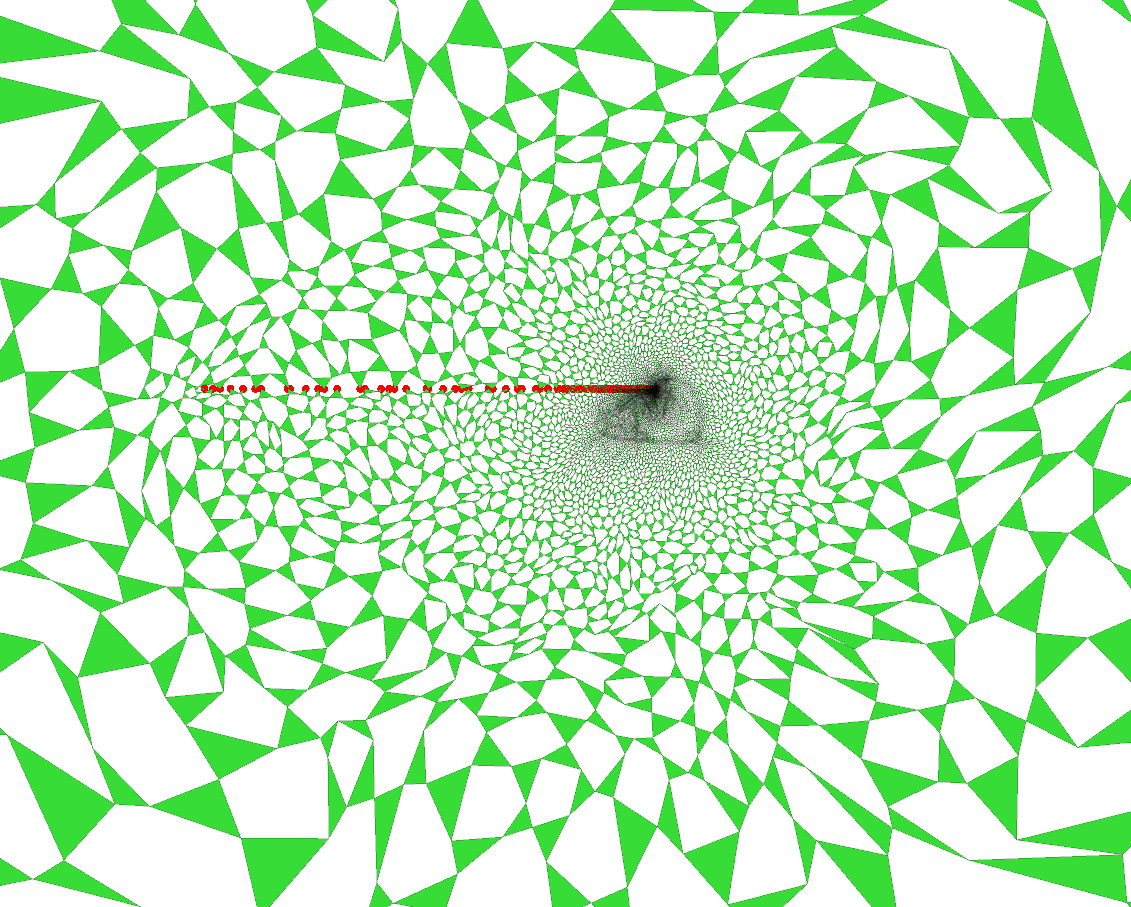}\\
Mid-edge uniformization & Uniformization Zoom-in \\
\includegraphics[width=0.4\columnwidth]{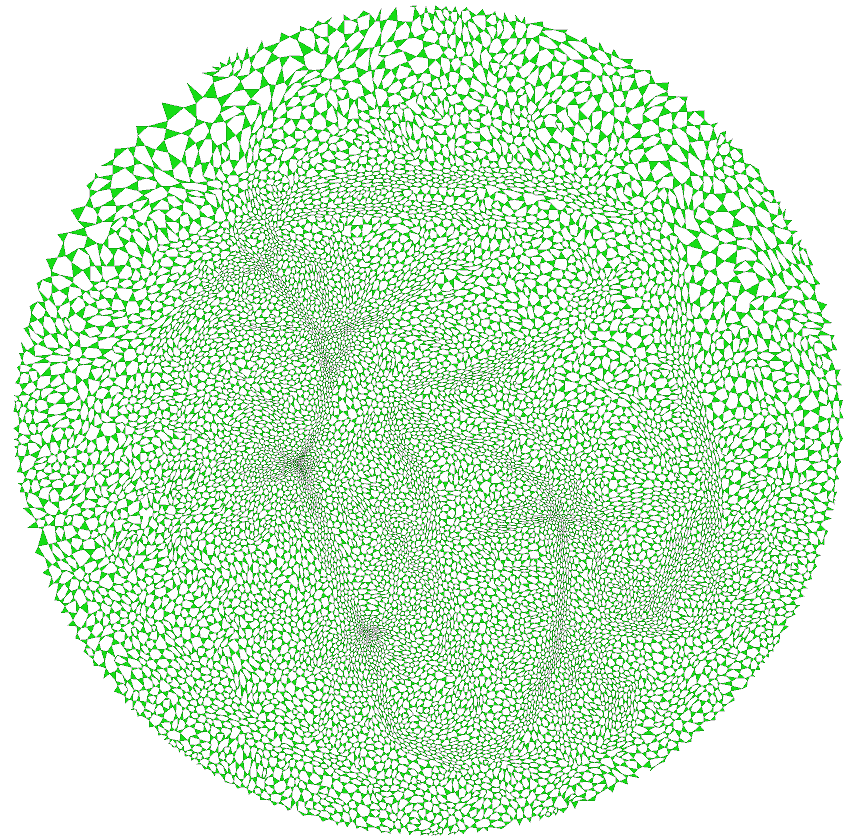}& \includegraphics[width=0.5\columnwidth]{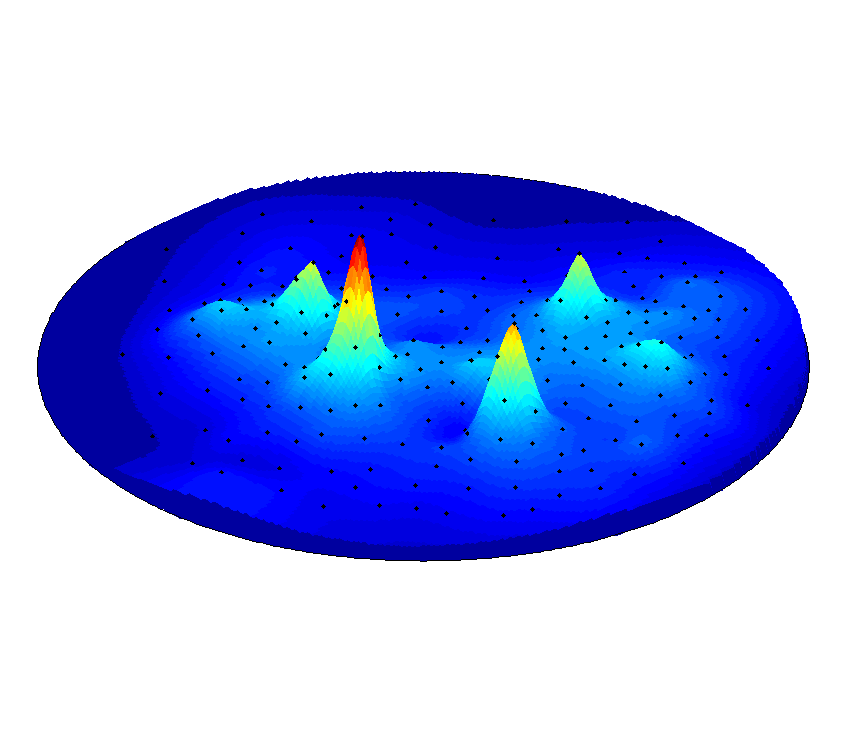}\\
After mapping to the disk & Interpolated conformal factor
\end{tabular}\\
\caption{The discrete conformal transform to the unit disk for the
surface of Figure \ref{f:discrete_type_1}, and the interpolation of
the corresponding discrete conformal factors (plotted with the JET
color map in Matlab). The red points in the top row's images show
the boundary points of the disk.} \label{f:discrete_type_2}
\end{figure}

Next, we define the Euclidean discrete conformal factors, defined as
the density, w.r.t. the Euclidean metric, of the mid-edge triangles
(faces), i.e.
\[
\mu^E_{\mf_{r,s,t}} =
\frac{\vol_{\mathds{R}^3}(\mf_{r,s,t})}{\vol(\Phi(\mf_{r,s,t}))}.
\]
Note that according to this definition, we have
\[
\int_{\Phi(\mf_{r,s,t})}\,\mu^E_{\mf_{r,s,t}}\,d\vol_E\, =
\,\frac{\vol_{\mathds{R}^3}(\mf_{r,s,t})}{\vol_E\left(\Phi(\mf_{r,s,t})\right)}\,
\vol_E\left(\Phi(\mf_{r,s,t})\right)\,=\,\vol_{\mathds{R}^3}(\mf_{r,s,t}),
\]
where $\vol_E$ denotes the standard Euclidean volume element
$dx^1\wedge dx^2$ in $\C$, and $\vol_{\R^3}(\mf)$ stands for the
area of $\mf$ as induced by the standard Euclidean volume element in
$\R^3$. The discrete Euclidean conformal factor at a mid-edge vertex
$\mv_r$ is then defined as the average of the conformal factors for
the two faces $\mf_{r,s,t}$ and $\mf_{r,s',t'}$ that touch in
$\mv_r$, i.e.
\[
\mu^E_{\mv_r} \,=\,
\frac{1}{2}\,\left(\mu^E_{\mf_{r,s,t}}\,+\,\mu^E_{\mf_{r,s',t'}}\right).
\]
Figure \ref{f:discrete_type_2} illustrates the values of the
Euclidean conformal factor for the mammalian tooth surface of
earlier figures. The discrete hyperbolic conformal factors are
defined according to the following equation, consistent with the
convention adopted in section \ref{s:prelim},
\begin{equation}\label{e:discrete_hyperbolic_conformal_density}
    \mu^H_{\mv_r} \,= \,\mu^E_{\mv_r} \,\left(1 - |\Phi(\mv_r)|^2\right)^2.
\end{equation}
As before, we shall often drop the superscript: unless otherwise
stated, $\mu=\mu^H$, and $\nu=\nu^H$.

The (approximately) conformal mapping of the original mesh to the
disk is completed by constructing a smooth interpolant $\Gamma_\mu
:\D \rightarrow \R$, which interpolates the discrete conformal
factor so far defined only at the vertices in $\Phi(\mV)$;
$\Gamma_\nu$ is constructed in the same way. In practice we use
Thin-Plate Splines, i.e. functions of the type
\[
\Gamma_\mu(z) = p_1(z)\, +\,\sum_i\, b_i \,\psi(|z-z_i|)\,,
\]
where $\psi(r)=r^2\log(r^2)$, $p_1(z)$ is a linear polynomial in
$x^1,x^2$, and  $b_i \in \C$; $p_1$ and the $b_i$ are determined by
the data that need to be interpolated. Similarly
$\Gamma_\nu(w)\,=\,q_1(w)\,+\,\sum_j \,c_j\, \psi(|w-w_j|)$ for some
constants $c_j \in \C$ and a linear polynomial $q_1(w)$ in
$y^1,y^2$. We use as interpolation centers two point sets
$Z=\set{z_i}_{i=1}^n,$ and $W=\set{w_j}_{j=1}^p$ defined in the next
subsection for the discretization of measures. See Figure
\ref{f:discrete_type_2} (bottom-right) the interpolated conformal
factor based on the black point set.

We also note that for practical purposes it is sometimes
advantageous to use Smoothing Thin-Plate Splines:
$$\Gamma_\mu(z) = \mathop{\mathrm{argmin}}_{\gamma} \set{ \lambda \sum_{r} \abs{\mu_{\mv_r} - \gamma(\Phi(\mv_r))}^2 + (1-\lambda) \int_{\D} \parr{\frac{\partial^2 \gamma }{\partial (x^1)^2}}^2 + \parr{\frac{\partial^2 \gamma }{\partial x^1 x^2}}^2 + \parr{\frac{\partial^2 \gamma }{\partial (x^2)^2}}^2  dx^1 \wedge dx^2 }.$$
when using these, we picked the value $0.99$ for the smoothing
factor $\lambda$.

%-------------------------------------------------------------------
\subsection{Discretizing continuous measures and their transport}
\label{ss:discretizing_discrete_measures}

In this subsection we indicate how to construct discrete
approximations ${T_{\mbox{\footnotesize{discr.}},d}^R}(\xi,\zeta)$
for the distance $\d^R(\X,\Y)=T^R_d(\xi,\zeta)$ between two surfaces
$\X$ and $\Y$, each characterized by a corresponding smooth density
on the unit disk $\D$ ($\xi$ for $\X$, $\zeta$ for $\Y$). (In
practice, we will use the smooth functions $\Gamma_{\mu}$ and
$\Gamma_{\nu}$ for $\xi$ and $\zeta$. ) We shall use discrete
optimal transport to construct our approximation
${T_{\mbox{\footnotesize{discr.}},d}^R}(\xi,\zeta)$, based on
sampling sets for the surfaces, with convergence to the continuous
distance as the sampling is refined.

To quantify how fine a sampling set $Z$ is, we use the notion of
\emph{fill distance} $\varphi(Z)$:
\[
\varphi(Z)\,:=\,\sup \set{r>0\ \big| \ z \in \M:B_g(z,r)\cap Z_h =
\emptyset}~,
\]
where $B_g(z,r)$ is the geodesic open ball of radius $r$ centered at
$z$. That is, $\varphi(Z)$ is the radius of the largest geodesic
ball that can be fitted on the surface $\X$ without including any
point of $Z$. The smaller $\varphi(Z)$, the finer the sampling set.

Given the smooth density $\xi$ (on $\D$), we discretize it by first
distributing $n$ points $Z=\{z_i\}_{i=1}^n$ on $\X$ with
$\varphi(Z)=h>0$. For $i=1,\ldots,n$, we define the sets $\Xi_i$ to
be the Voronoi cells corresponding to $z_i \in Z$; this gives a
partition of the surface $\X$ into disjoint convex sets,
$\X=\cup_{i=1}^n \Xi_i$. We next define the discrete measure $\xi_Z$
as a superposition of point measures localized in the points of $Z$,
with weights given by the areas of $\Xi_i$, i.e. $\xi_Z
\,=\,\sum_{i=1}^n\,\xi_i \delta_{z_i}$, with
$\xi_i:=\xi(\Xi_i)=\int_{\Xi_i}d\vol_\X$. Similarly we denote by
$W=\{w_j\}_{j=1}^p$, $\Upsilon_j$, and $\zeta_j:=\zeta(\Upsilon_j)$
the corresponding quantities for surface $\Y$. We shall always
assume that the surfaces $\X$ and $\Y$ have the same area, which,
for convenience, we can take to be 1. It then follows that the
discrete measures $\xi_Z$ and $\zeta_W$ have equal total mass
(regardless of whether $n=p$ or not). The approximation algorithm
will compute optimal transport for the discrete measures $\xi_Z$ and
$\zeta_W$; the corresponding discrete approximation to the distance
between $\xi$ and $\zeta$ is then given by $T^R_d(\xi_Z,\zeta_W)$.

Convergence of the discrete approximations $T^R_d(\xi_Z,\zeta_W)$ to
$T^R_d(\xi,\zeta)=\d^R(\X,\Y)$ as $\varphi(Z)$, $\varphi(W) \too 0$
then follows from the results proved in
\cite{Lipman:TR2009:approxOptimal}. Corollary 3.3 in
\cite{Lipman:TR2009:approxOptimal} requires that the distance
function $d^R_{\xi,\zeta}(\cdot,\cdot)$ used to define
$T^R_d(\xi,\zeta)$ be uniformly continuous in its two arguments. We
can establish this in our present case by invoking the continuity
properties of $d^R_{\xi,\zeta}$ proved in Theorem
\ref{thm:continuity_of_d^R_mu,nu}, extended by the following lemma,
proved in Appendix A.
\begin{lem}\label{lem:extension_of_d}
Let $\set{(z_k,w_k)}_{k\geq 1} \subset \D\times\D$ be a sequence
that converges, in the Euclidean norm, to some point in $(z',w') \in
\bbar{\D}\times\bbar{\D} \setminus \D\times\D$, that is
$|z_k-z'|+|w_k-w'| \too 0$, as $k \too \infty$. Then, $\lim_{k\too
\infty}d^R_{\xi,\zeta}(z_k,w_k)$ exists and depends only on the
limit point $(z',w')$.
\end{lem}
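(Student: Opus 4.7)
The plan is to reduce the definition of $d^R_{\xi,\zeta}(z_k,w_k)$ to an infimum over $S^1$ of integrals on the fixed domain $\Omega_{0,R}$, and then pass to the limit using uniform convergence of the integrand. First, pick any Möbius transformation $m_{z_k}\in\Md$ with $m_{z_k}(0)=z_k$, say $m_{z_k}(u)=(u+z_k)/(1+\overline{z_k}u)$. By Lemma~\ref{lem:a_and_tet_formula_in_mobius_interpolation}, every element of $M_{D,z_k,w_k}$ can be written uniquely as $m=m_{0,w_k,\tau}\circ m_{z_k}^{-1}$ with $\tau\in S^1$, where $m_{0,w_k,\tau}(u)=(\tau u+w_k)/(1+\overline{w_k}\tau u)$. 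The change of variables $z=m_{z_k}(u)$, which is an isometry for $d\vol_H$ by~(\ref{e:disk_mobius_is_isometry_of_hyperbolic_geom}) and maps $\Omega_{0,R}$ onto $\Omega_{z_k,R}$ by Lemma~\ref{lem:m(omega_z)=omega_w}, then yields
\begin{equation*}
d^R_{\xi,\zeta}(z_k,w_k)=\inf_{\tau\in S^1}\Psi_k(\tau),\quad \Psi_k(\tau):=\int_{\Omega_{0,R}}\bigl|\xi(m_{z_k}(u))-\zeta(m_{0,w_k,\tau}(u))\bigr|\,d\vol_H(u).
\end{equation*}

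The core observation is that the rational maps $(u,z)\mapsto(u+z)/(1+\overline{z}u)$ and $(u,w,\tau)\mapsto(\tau u+w)/(1+\overline{w}\tau u)$ extend jointly continuously to $\Omega_{0,R}\times\bbar{\D}$ and $\Omega_{0,R}\times\bbar{\D}\times S^1$, since $|u|\le\tanh R<1$ forces the denominators to stay $\ge 1-\tanh R>0$ on these compact sets, so the convergence as $z_k\to z'$ and $w_k\to w'$ is in fact uniform. Two features of the boundary limits are worth noting: if $|z'|=1$ then the limit map $m_{z'}(u)$ is identically equal to $z'$, and if $|w'|=1$ then the limit map $m_{0,w',\tau}(u)$ takes values on $\partial\D$ for every $u$ and $\tau$. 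Combining this uniform convergence with the uniform continuity of $\xi,\zeta$ on the compact set $\bbar{\D}$—which holds for the smooth thin-plate-spline interpolants $\Gamma_\mu,\Gamma_\nu$ used in the application—yields uniform convergence of the integrand on $\Omega_{0,R}\times S^1$, so that $\Psi_k\to\Psi^*$ uniformly on $S^1$, where
\begin{equation*}
\Psi^*(\tau):=\int_{\Omega_{0,R}}\bigl|\xi(m_{z'}(u))-\zeta(m_{0,w',\tau}(u))\bigr|\,d\vol_H(u)
\end{equation*}
and the $m$'s on the right are understood via the continuous extensions described above.

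Finally, uniform convergence on the compact parameter set $S^1$ gives $\inf_{\tau}\Psi_k(\tau)\to\inf_{\tau}\Psi^*(\tau)$ by the same two-line sandwich estimate used at the end of the proof of Theorem~\ref{thm:continuity_of_d^R_mu,nu}. Since $\Psi^*$ depends only on $(z',w')$ and on the fixed data $\xi,\zeta,R$, this shows that $\lim_{k}d^R_{\xi,\zeta}(z_k,w_k)=\inf_{\tau}\Psi^*(\tau)$ exists and is determined by the limit point $(z',w')$ alone, as claimed. The main technical obstacle is the boundary case $|w'|=1$: the arguments of $\zeta$ then accumulate on $\partial\D$, so one needs $\zeta$ to admit a continuous extension up to the boundary in order to make sense of the limiting integrand. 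This is the one place where more than continuity on $\D$ is required, and it is automatic for the smooth thin-plate-spline interpolants actually used in Section~\ref{s:the_discrete_case_implementation}.
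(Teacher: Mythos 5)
Your proposal is correct in outline and takes a genuinely different route from the paper's. You parametrize the Möbius family explicitly, pull everything back to the fixed domain $\Omega_{0,R}$, and argue by uniform convergence of the integrand on $\Omega_{0,R}\times S^1$, using the (joint) continuous extension of the rational maps $(u,z)\mapsto (u+z)/(1+\bar z u)$ to $\Omega_{0,R}\times\bbar\D$. The paper instead first uses the symmetry $d^R_{\xi,\zeta}(z_k,w_k)=d^R_{\zeta,\xi}(w_k,z_k)$ to put the boundary point $z'$ in the ``second slot,'' and then proves (via Lemma~\ref{lem:extermal_mobius}) that every competing Möbius map sends all of $\Omega_{0,R}$ within distance $\eps$ of $\partial\D$, after which the $\xi$-term is killed outright because $\xi=\xi^H=\wt\xi\,(1-|z|^2)^2$ with $\wt\xi$ bounded. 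The limit is then identified as $\int_{\Omega_{0,R}}|\zeta((w+w')/(1+\bar{w'}w))|\,d\vol_H(w)$ with no infimum remaining. Your approach is more symmetric and arguably cleaner, and it produces the infimum form $\inf_\tau\Psi^*(\tau)$; the paper's approach also gives a closed-form limit without an infimum, which is slightly more informative.

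One point deserves correction, and it is exactly the one you flagged as the ``main technical obstacle.'' You attribute the continuous extension of $\xi,\zeta$ to $\bbar\D$ to the thin-plate-spline interpolants $\Gamma_\mu,\Gamma_\nu$, but those are a downstream numerical device, and the lemma is stated for the continuous densities themselves. The natural and intended source of the extension is the decomposition $\xi=\xi^H=\wt\xi\,(1-|z|^2)^2$ from equation~(\ref{e:relation_hyperbolic_euclidean_density}), together with the boundedness of the Euclidean density $\wt\xi$: this forces $\xi^H\to 0$ as $|z|\to 1$, so $\xi$ extends continuously to $\bbar\D$ \emph{with boundary value zero}. (It is precisely this vanishing that the paper exploits to discard the $\xi$-term; your approach doesn't need the value to be zero, but you should recognize where the continuity comes from.) If you replace the TPS citation with this observation, your proof is complete and stands on its own.
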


We shall denote this continuous extension of
$d^R_{\mu,\nu}(\cdot,\cdot)$ to $\bbar{\D}\times \bbar{\D}$ by the
same symbol $d^R_{\mu,\nu}$.

Since $\bbar{\D} \times \bbar{D}$ is compact, (this extension of)
$d^R_{\xi,\zeta}(\cdot,\cdot)$ is uniformly continuous: for all
$\eps >0$, there exists a $\delta=\delta(\eps)$ such that, for all
$z,z' \in \X$, $w,w' \in \Y$,
\[
d_\X(z,z') < \delta(\eps) \ , d_\Y(w,w') < \delta(\eps) \
\Rightarrow \ \abs{d^R_{\xi,\zeta}(z,w) - d^R_{\xi,\zeta}(z',w') } <
\eps,
\]
where $d_\X(\cdot,\cdot)$ is the geodesic distance on $\X$, and
$d_\Y(\cdot,\cdot)$ is the geodesic distance on $\Y$.

The results in \cite{Lipman:TR2009:approxOptimal} then imply that
$\xi_Z \too \xi$ in the \emph{weak} sense, as $\varphi(Z) \too 0$,
i.e. that for all bounded continuous functions $f:\bbar{\D}\too
\Real$, the convergence $\int_\D f\ d\xi_Z \too \int_\D f\ d\xi$
holds \cite{Billingsley68}. Similarly and $\zeta_W \too \zeta$ in
the weak sense as $\varphi(W) \too 0$. Furthermore,
\cite{Lipman:TR2009:approxOptimal} also proves that for
$\max(\varphi(Z),\varphi(W)) < \frac{\delta(\eps)}{2}$
\[
\abs{T^R_d(\xi_Z,\zeta_W) - T^R_d(\xi,\zeta)} < \eps.
\]
More generally, it is shown that
\begin{equation}\label{e:optimal_trans_discrete_approx_with_mod_cont}
    \abs{T^R_d(\xi_Z,\zeta_W) - T^R_d(\xi,\zeta)} <
\omega_{d^R_{\xi,\zeta}}\parr{\max(\varphi(Z),\varphi(W))},
\end{equation}
where $\omega_{d^R_{\xi,\zeta}}$ is the modulus of continuity of
$d^R_{\xi,\zeta}$, that is
$$\omega_{d^R_{\xi,\zeta}}(t)=\sup_{d_\X(z,z') + d_\Y(w,w') <
t}\abs{d^R_{\xi,\zeta}(z,w)-d^R_{\xi,\zeta}(z',w')}.$$

We shall see below that it will be particularly useful to choose the
centers in $Z=\set{z_i}_{i=1}^n$, $W=\set{w_j}_{j=1}^p$ such that
the corresponding Voronoi cells are (approximately) of equal area,
i.e. $n=N=p$ and $\xi_i=\xi(\Xi_i)\approx\frac{1}{N}$,
$\zeta_j=\zeta(\Upsilon_j)\approx\frac{1}{N}$, where we have used
that the total area of each surface is normalized to $1$. An
effective way to calculate such sample sets $Z$ and $W$ is to start
from an initial random seed (which will not be included in the set),
and take the geodesic point furthest from the seed as the initial
point of the sample set. One then keeps repeating this procedure,
selecting at each iteration the point that lies at the furthest
geodesic distance from the set of points already selected. This
algorithm is known as the Farthest Point Algorithm (FPS)
\cite{eldar97farthest}. An example of the output of this algorithm,
using geodesic distances on a disk-type surface, is shown in Figure
\ref{f:discrete_type_3}. Further discussion of practical aspects of
Voronoi sampling of a surface can be found in
\cite{BBK2007non_rigid_book}.

\begin{figure}[h]
\centering \setlength{\tabcolsep}{0.4cm} \hspace*{.5 in}
\begin{minipage}{3 in}
\includegraphics[width=0.8\columnwidth]{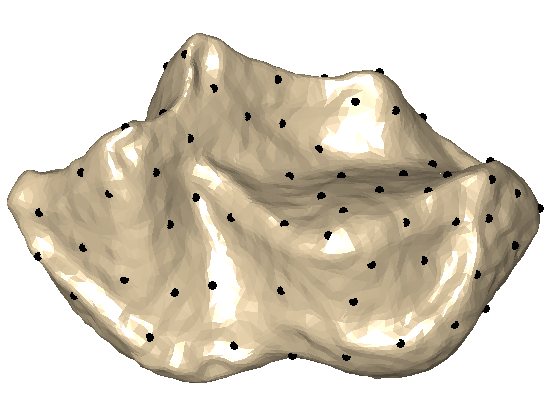}\hspace*{-.5 in}
\end{minipage}
\hspace*{-.5 in}
\begin{minipage}{3 in}
\caption{Sampling of the surface of  Figure \ref{f:discrete_type_1}
obtained by the Farthest Point  Algorithm.}
\label{f:discrete_type_3}
\end{minipage}
\end{figure}

% ----------------------------------------------------------------
\subsection{Approximating the local distance function $d^R_{\mu,\nu}$.}
We are now ready to construct our discrete version of the optimal
volume transportation for surfaces
(\ref{e:generalized_Kantorovich_transportation}). The previous
subsection describes how to derive the discrete measures
$\mu_Z,\nu_W$ from the approximate conformal densities
$\Gamma_\mu,\Gamma_\nu$ and the sampling sets $Z$ and $W$. For
simplicity, we will, with some abuse of notation, identify the
approximations $\Gamma_\mu,\Gamma_\nu$ with $\mu,\nu$. The
approximation error made here is typically much smaller than the
errors made in further steps (see below) and we shall neglect it.
The final component is approximating $d^R_{\mu,\nu}(z_i,w_j)$ for
all pairs $(z_i,w_j) \in Z\times W$. Applying
(\ref{e:d_mu,nu(z,w)_def}) to the points $z_i$, $w_j$ we have:
\begin{equation}\label{e:d_mu,nu(z_i,w_j)}
    d^R_{\mu,\nu}(z_i,w_j) = \min_{m(z_i)=w_j}\int_{\Omega_{z_i, R}}\Big|\,\mu(z)-\nu(m(z))\,\Big |\,d\vol_H.
\end{equation}

\begin{figure}
  % Requires \usepackage{graphicx}
  \includegraphics[width=0.7\columnwidth]{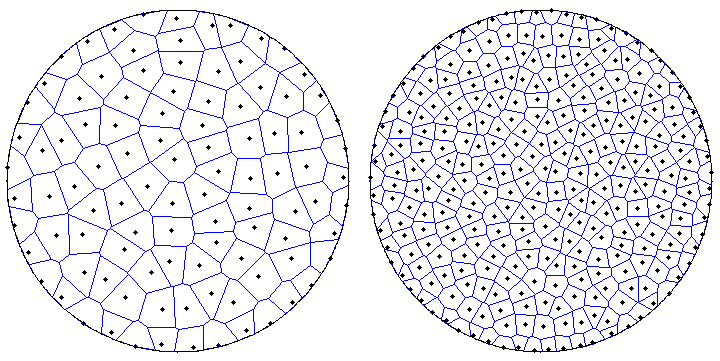}\\
  \caption{The integration centers and their corresponding Voronoi cells used
for calculating the integration weights for the discrete quadrature.
Left: $100$ centers; Right: $300$.}\label{fig:integration_pnts}
\end{figure}

To obtain $d^R_{\mu,\nu}(z_i,w_j)$ we will thus need to compute
integrals over hyperbolic disks of radius $R$, which is done via a
separate approximation procedure, set up once and for all in a
preprocessing step at the start of the algorithm.

By using a \Mbs transformation $\widetilde{m}$ such that
$\widetilde{m}(0)=z_0$, and the identity
\[
\int_{\Omega_{0,R}}\Big|\,\mu(\widetilde{m}(u)) - \nu(m \circ
\widetilde{m} (u))\,\Big| \,d\vol_H(u) =
\int_{\Omega_{z_0,R}}\Big|\,\mu(z)) - \nu(m (z))\,\Big|
\,d\vol_H(z)~,
\]
we can reduce the integrals over the hyperbolic disks
$\Omega_{z_i,R}$ to integrals over a hyperbolic disk centered around
zero.

In order to (approximately) compute integrals over $\Omega_0
=\Omega_{0,R}=\{z |\ |z|\leq r_R\}$, we first pick a positive
integer $K$ and distribute centers $p_k,\,k=1,...,K $ in $\Omega_0$.
We then decompose $\Omega_0$ into Voronoi cells $\Delta_k$
corresponding to the $p_k$, obtaining $\Omega_0 = \cup_{k=1}^K
\Delta_k$; see Figure \ref{fig:integration_pnts} (note that these
Voronoi cells are completely independent of those used in
\ref{ss:discretizing_discrete_measures}.)

To approximate the integral of a continuous function $f$ over
$\Omega_0$ we then use
\[
\int_{\Omega_0}\, f(z)\, d\vol_H(z) \approx \sum_k \,
\brac{\int_{\Delta_k}d\vol_H(z)}\,f(p_k) = \sum_k \,\alpha_k f(p_k)
\]
where $\alpha_k = \int_{\Delta_k}d\vol_H(z)$.

We thus have the following approximation:
\begin{eqnarray}
d^R_{\mu,\nu}(z_i,w_j)&=&
\min_{m(z_i)=w_j}\int_{\Omega_{z_i,R}}\Big|\,\mu(z) - \nu(m
(z))\,\Big|
\,d\vol_H(z)\nonumber\\
&=& \min_{m(z_i)=w_j}\int_{\Omega_{0,R}}\Big|\,\mu(\widetilde{m}_i
(z)) -
\nu(m(\widetilde{m}_i (z)))\,\Big|\,d\vol_H(z)\nonumber\\
&\approx&   \min_{m(z_i)=w_j}  \sum_k \,\alpha_k
\,\Big|\,\mu(\widetilde{m}_i (p_k)) - \nu(m(\widetilde{m}_i
(p_k)))\,\Big|~,\label{e:discrete_approx__d_mu_nu(z_i,w_j)}
\end{eqnarray}
where the \Mbs transformations $\widetilde{m}_i$, mapping 0 to
$z_i$, are selected as soon as the $z_i$ themselves have been
picked, and remain the same throughout the remainder of the
algorithm.

It can be shown that picking a set of centers $\set{p_k}$ with
fill-distance $h>0$ leads to an $O(h)$ approximation;  in
Appendix~\ref{a:appendix A} we prove:
\begin{thm}\label{thm:convergence_of_numerical_quadrature}
For continuously differentiable $\mu,\nu$,
\[
\left|\,d^R_{\mu,\nu}(z_i,w_j)- \min_{m(z_i)=w_j}\sum_k \,\alpha_k
\,\left|\,\mu(\widetilde{m}_i (p_k)) - \nu(m(\widetilde{m}_i
(p_k)))\,\right|\, \right| \leq C\,\varphi\left( \set{p_k} \right)~,
\]
where the constant $C$ depends only on $\mu,\nu,R$.
\end{thm}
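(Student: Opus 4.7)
The plan is to use the elementary estimate $|\min_m A(m) - \min_m B(m)| \leq \sup_m |A(m) - B(m)|$ to reduce the claim to a uniform (in $m \in M_{D,z_i,w_j}$) bound on the quadrature error. For each such $m$, set
\[
f_m(z) := \left|\,\mu(\widetilde{m}_i(z)) - \nu(m(\widetilde{m}_i(z)))\,\right|,
\]
so that the middle and right expressions in (\ref{e:discrete_approx__d_mu_nu(z_i,w_j)}) are, respectively, $\int_{\Omega_{0,R}} f_m\, d\vol_H$ and $\sum_k \alpha_k f_m(p_k)$. Using additivity of the integral over the Voronoi decomposition $\Omega_{0,R} = \cup_k \Delta_k$ and $\alpha_k = \int_{\Delta_k} d\vol_H$, the per-$m$ error is
\[
\left|\sum_k \int_{\Delta_k} \bigl(f_m(z) - f_m(p_k)\bigr)\, d\vol_H(z)\right| \leq \sum_k \int_{\Delta_k} |f_m(z) - f_m(p_k)|\, d\vol_H(z).
\]
Since every $z \in \Delta_k$ lies within hyperbolic distance $\varphi(\{p_k\})$ of $p_k$ by the definition of the fill distance and the Voronoi partition, it then suffices to exhibit a hyperbolic Lipschitz constant $L$ for $f_m$ that is independent of $m$ and of $z_i$.

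For the Lipschitz bound, use $||a|-|b|| \leq |a-b|$ together with the triangle inequality:
\[
|f_m(z) - f_m(z')| \leq |\mu(\widetilde{m}_i(z)) - \mu(\widetilde{m}_i(z'))| + |\nu(m \circ \widetilde{m}_i(z)) - \nu(m \circ \widetilde{m}_i(z'))|.
\]
The key point is that both $\widetilde{m}_i$ and $m \circ \widetilde{m}_i$ belong to $\Md$, hence are hyperbolic isometries of $\D$ by (\ref{e:disk_mobius_is_isometry_of_hyperbolic_geom}). Therefore each term on the right is controlled by the hyperbolic Lipschitz norm of $\mu$ (resp. $\nu$) times $d_H(z,z')$. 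Since $\mu, \nu$ are continuously differentiable, their hyperbolic differentials $|d\mu|_H = (1-|z|^2)|\nabla^E\mu|$ and $|d\nu|_H$ are bounded on $\D$ (the vanishing factor $(1-|z|^2)$ cancels any blow-up of the Euclidean gradient near $\partial \D$). Call these bounds $L_\mu$ and $L_\nu$; they depend only on $\mu$ and $\nu$. We then get $|f_m(z) - f_m(z')| \leq (L_\mu + L_\nu)\, d_H(z,z')$, uniformly in $m$ and $z_i$.

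Combining the two steps yields
\[
\left|\int_{\Omega_{0,R}} f_m\, d\vol_H - \sum_k \alpha_k f_m(p_k)\right| \leq (L_\mu + L_\nu)\,\varphi(\{p_k\})\,\vol_H(\Omega_{0,R}),
\]
so taking $C := (L_\mu + L_\nu)\,\vol_H(\Omega_{0,R})$ (which depends only on $\mu,\nu,R$) and the supremum over $m$ gives the theorem via the $|\min - \min|$ reduction. The main obstacle is the uniform Lipschitz bound on $f_m$, because the images $\widetilde{m}_i(\Omega_{0,R}) = \Omega_{z_i,R}$ can drift arbitrarily close to $\partial\D$ as $z_i$ varies; this is precisely where the isometry property (\ref{e:disk_mobius_is_isometry_of_hyperbolic_geom}) of $\Md$ with respect to the hyperbolic metric is essential, since it converts the potentially badly-distorted Euclidean estimates into clean hyperbolic ones that absorb into the global bound on $(1-|z|^2)|\nabla^E \mu|$.
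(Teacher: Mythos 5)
Your proof is correct and follows the same overall skeleton as the paper's (the $|\min - \min| \le \sup|\cdot - \cdot|$ reduction, followed by a per-Voronoi-cell quadrature bound via a Lipschitz estimate on $f_m$), but the key uniformity step is handled by a genuinely different, and arguably cleaner, argument. The paper proves and invokes Lemma~\ref{lem:mobius_is_lipschitz}, an explicit computation showing that each $m \in \Md$ restricted to $\Omega_{0,R}$ is \emph{Euclidean}-Lipschitz with constant at most $(1-r_R)^{-2}$, and then composes this with Euclidean Lipschitz constants $C_\mu, C_\nu$ of the densities. You instead work in the hyperbolic metric, where the observation that $\widetilde{m}_i$ and $m\circ\widetilde{m}_i$ lie in $\Md$ and are therefore hyperbolic isometries (by (\ref{e:disk_mobius_is_isometry_of_hyperbolic_geom})) makes the inner maps Lipschitz-$1$ for free, with no computation, bypassing Lemma~\ref{lem:mobius_is_lipschitz} entirely and making transparent \emph{why} the constant can be uniform in $z_i,w_j$. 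Since $d_H(z,w)\ge|z-w|$ on $\D$, Euclidean-Lipschitz implies hyperbolic-Lipschitz, so your argument applies whenever the paper's does.

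Two small cautions. First, your justification that $(1-|z|^2)\,|\nabla^E\mu|$ is bounded---``the vanishing factor cancels any blow-up of the Euclidean gradient''---does not actually follow from $\mu$ being continuously differentiable on the \emph{open} disk, which is all the theorem hypothesis literally says; the gradient could grow faster than $(1-|z|^2)^{-1}$. The paper's own phrase (``continuous derivatives on compact domain, they are Lipschitz'') is making the same kind of tacit boundedness assumption, which does hold for the interpolants $\Gamma_\mu,\Gamma_\nu$ that are actually plugged into the algorithm; you should simply \emph{state} that you assume $\mu,\nu$ have bounded hyperbolic (or Euclidean) differentials rather than offer the cancellation heuristic. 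Second, the paper's modulus of continuity and fill distance on $\Omega_{0,R}$ are taken in the Euclidean metric whereas yours are hyperbolic; on the compact set $\Omega_{0,R}$ these two metrics are comparable with constants depending only on $R$, so the discrepancy is harmless and simply absorbed into $C$, but it is worth acknowledging.
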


Let us denote this approximation by $$\wh{d}^R_{\mu,\nu}(z_i,w_j) =
\min_{m(z_i)=w_j}\sum_k \,\alpha_k \,\left|\,\mu(\widetilde{m}_i
(p_k)) - \nu(m(\widetilde{m}_i (p_k)))\,\right|.$$ Since the above
theorem guarantees that the approximation error
$\abs{\wh{d}^R_{\mu,\nu}(z_i,w_j) - d^R_{\mu,\nu}(z_i,w_j)}$ can be
uniformly bounded independently of $z_i,w_j$, it can be shown that
$$\babs{T^R_d(\mu_Z,\nu_W) - T^R_{\wh{d}}(\mu_Z,\nu_W)}
\leq C \varphi \parr{\set{p_k}},$$ where again $C$ is dependent only
upon $\mu,\nu,R$. Combining this with
eq.(\ref{e:optimal_trans_discrete_approx_with_mod_cont}) we get that
\begin{equation}\label{e:final_approx_error}
    \babs{T^R_d(\mu,\nu) - T^R_{\wh{d}}(\mu_Z,\nu_W)}
\leq \omega_{d^R_{\mu,\nu}}\parr{\max \parr{\varphi(Z),\varphi(W)}}
+ C\varphi  \parr{\set{p_k}}.
\end{equation}

In practice, for calculating $\wh{d}^R_{\mu,\nu}$, the minimization
over $M_{D,z_i,w_j}$, the set of all \Mbs transformations that map
$z_i$ to $w_k$, is discretized as well: instead of minimizing over
all $m_{z_i,w_j,\sigma}$ (see subsection 3.1), we minimize over only
the \Mbs transformations
$\left(m_{z_i,w_j,2\pi\ell/L}\right)_{\ell=0,1,..,L-1}$. Taking this
into account as well, we have thus
\begin{eqnarray}
d^R_{\mu,\nu}(z_i,w_j)\approx \min_{\ell=1,\ldots L}  \sum_k
\,\alpha_k \,\Big|\,\mu(\widetilde{m}_i (p_k)) - \nu(m_{z_i,w_j,2\pi
\ell/L}(\widetilde{m}_i
(p_k)))\,\Big|~;\label{e:discrete_approx__d_mu_nu(z_i,w_j)_bis}
\end{eqnarray}
the error made in approximation
(\ref{e:discrete_approx__d_mu_nu(z_i,w_j)_bis}) is therefore
proportional to $L^{-1}+C\varphi\left( \set{p_k} \right)$.

To summarize, our approximation $T^R_{\wh{d}}(\mu_Z,\nu_W)$ to the
uniformly continuous $T^R_d(\mu,\nu)$ is based on two
approximations: on the one hand, we compute the transportation cost
between the discrete measures $\mu_Z,\nu_W$, approximating
$\mu,\nu$; on the other hand, this transportation cost involves a
local distance $\wh{d}^R_{\mu,\nu}$ which is itself an
approximation.
The transportation between the discrete measures will be computed by
solving a linear programming optimization, as explained in detail in
the next subsection.
The final approximation error (\ref{e:final_approx_error}) depends
on two factors: 1) the fill distances $\varphi(Z),\varphi(W)$ of the
sample sets $Z,W$, and 2) the approximation of the local distance
function $d^R_{\mu,\nu}(z_i,w_j)$ between the sample points.
Combining the discretization of the M\"{o}bius search with
(\ref{e:discrete_approx__d_mu_nu(z_i,w_j)_bis}), the total
approximation error is thus proportional to
$\omega_{d^R_{\Gamma_\mu,\Gamma_\nu}}\parr{\varphi\left( \set{p_k}
\right)} + L^{-1} + \varphi\left( \set{p_k} \right)$.

Recall that we are in fact using $\Gamma_\mu,\Gamma_\nu$ in the role
of of $\mu,\nu$ (see above), which entails an additional
approximation error. This error relates to the accuracy with which
discrete meshes approximate smooth manifolds, as well as the method
used to approximate uniformization. We come back to this question in
Appendix B. As far as we are aware, a full convergence result for
(any) discrete uniformization is still unknown; in any case, we
expect this error to be negligible (and approximately of the order
of the largest edge in the full mesh) compared to the others.

\subsection{Optimization via linear programming}

The discrete formulation of
eq.~(\ref{e:generalized_Kantorovich_transportation}) is commonly
formulated as follows:
\begin{equation}\label{e:discrete_kantorovich}
  \sum_{i,j}d_{ij}\pi_{ij} \rightarrow \min
\end{equation}
\begin{equation}\label{e:discrete_kantorovich_CONSTRAINTS}
\begin{array}{l}
  \left \{
  \begin{array}{l}
    \sum_i \pi_{ij} = \nu_j \\
    \sum_j \pi_{ij} = \mu_i \\
    \pi_{ij} \geq 0
  \end{array}
  \right . ,
  %\sum_i \pi_{ij} = \nu_j & \\
%  \sum_j \pi_{ij} = \mu_i &\\
%  \pi_{ij} \geq 0.  &
\end{array}
\end{equation}
where $\mu_i=\mu(\Xi_i)$ and $\nu_j=\nu(\Upsilon_j)$, and $d_{ij} =
d^R_{\mu,\nu}(z_i,w_j)$.

In practice, surfaces are often only partially isometric (with a
large overlapping part), or the sampled points may not have a good
one-to-one and onto correspondence (i.e. there are points both in
$Z$ and in $W$ that do not correspond well to any point in the other
set). In these cases it is desirable to allow the algorithm to
consider transportation plans $\pi$ with marginals \emph{smaller or
equal} to $\mu$ and $\nu$. Intuitively this means that we allow that
only some fraction of the mass is transported and that the remainder
can be ``thrown away''. This leads to the following formulation:
\begin{equation}\label{e:discrete_PARTIAL_kantorovich}
\sum_{i,j}d_{ij}\pi_{ij} \rightarrow \min
\end{equation}
\begin{equation}\label{e:discrete_PARTIAL_kantorovich_CONSTRAINTS}
\begin{array}{l}
  \left \{ \begin{array}{c}
             \sum_i \pi_{ij} \leq \nu_j \\
             \sum_j \pi_{ij} \leq \mu_i \\
             \sum_{i,j} \pi_{ij} = Q  \\
             \pi_{ij} \geq 0
           \end{array}
   \right .
\end{array}
\end{equation}
where $0 < Q \leq 1$ is a parameter set by the user that indicates
how much mass \emph{must} be transported, in total.

The corresponding transportation distance is defined by
\begin{equation}\label{e:discrete_trans_dist}
    T_d(\nu,\nu) = \sum_{ij}d_{ij}\pi_{ij},
\end{equation}
where $\pi_{ij}$ are the entries in the matrix $\pi$ for the optimal
(discrete) transportation plan.

Since these equations and constraints are all linear, we have the
following theorem:
\begin{thm}
The equations {\rm
(\ref{e:discrete_kantorovich})-(\ref{e:discrete_kantorovich_CONSTRAINTS})}
and {\rm (\ref{e:discrete_PARTIAL_kantorovich})-
(\ref{e:discrete_PARTIAL_kantorovich_CONSTRAINTS})} admit a global
minimizer that can be computed in polynomial time, using standard
linear-programming techniques.
\end{thm}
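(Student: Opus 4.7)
The plan is to observe that both optimization problems are, as written, linear programs in the variables $\pi_{ij}$ (indexed by $i \in \{1,\ldots,n\}$, $j \in \{1,\ldots,p\}$): the objective $\sum_{i,j}d_{ij}\pi_{ij}$ is linear, and all constraints in (\ref{e:discrete_kantorovich_CONSTRAINTS}) and (\ref{e:discrete_PARTIAL_kantorovich_CONSTRAINTS}) are linear equalities or inequalities together with the non-negativity constraints $\pi_{ij}\geq 0$. Once this is noted, the theorem reduces to two standard verifications plus a citation.

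First I would verify non-emptiness of the feasible set. For the full transportation problem (\ref{e:discrete_kantorovich})--(\ref{e:discrete_kantorovich_CONSTRAINTS}), the product measure $\pi_{ij}:=\mu_i\nu_j$ is feasible, since $\sum_j\pi_{ij}=\mu_i\sum_j\nu_j=\mu_i$ and similarly $\sum_i\pi_{ij}=\nu_j$ (recall $\sum_i\mu_i=\sum_j\nu_j=1$), and all entries are non-negative. For the partial problem (\ref{e:discrete_PARTIAL_kantorovich})--(\ref{e:discrete_PARTIAL_kantorovich_CONSTRAINTS}) with parameter $0<Q\leq 1$, the scaled product measure $\pi_{ij}:=Q\,\mu_i\nu_j$ is feasible: it trivially satisfies $\sum_{i,j}\pi_{ij}=Q$, and the marginal inequalities $\sum_j\pi_{ij}=Q\mu_i\leq\mu_i$, $\sum_i\pi_{ij}=Q\nu_j\leq\nu_j$.

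Next I would argue compactness. The constraints $\pi_{ij}\geq 0$ together with either $\sum_j\pi_{ij}=\mu_i$ (first problem) or $\sum_j\pi_{ij}\leq\mu_i$ (second problem) imply $0\leq\pi_{ij}\leq\mu_i\leq 1$, so the feasible set is a bounded subset of $\mathbb{R}^{np}$. It is also closed, being the intersection of finitely many closed halfspaces and hyperplanes. Thus the feasible set is a compact, non-empty convex polytope, and the continuous linear objective attains its infimum on it; by the fundamental theorem of linear programming this minimum is moreover attained at a vertex of the polytope.

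Finally, for the polynomial-time claim I would invoke the standard algorithmic theory of linear programming: both the ellipsoid method of Khachiyan and the interior-point methods of Karmarkar (and successors) solve any linear program of the general form $\min c^\top x$ subject to $Ax\leq b$, $x\geq 0$ in time polynomial in the bit-size of the input $(A,b,c)$, producing the exact optimum at a vertex. Applying these to our instances, whose input data consist of $np$ coefficients $d_{ij}$ together with $n+p$ marginals (and, in the partial case, the scalar $Q$), yields a polynomial-time algorithm. There is no genuine obstacle in this proof; the only subtlety worth flagging is simply the non-emptiness check in the partial case, which is why we exhibited the scaled product measure explicitly.
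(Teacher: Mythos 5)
Your proof is correct and takes essentially the same approach as the paper, which simply notes that the constraints and objective are linear and appeals to standard linear programming theory without spelling out any details. You supply the routine but omitted verifications (non-emptiness of the feasible polytope via the product coupling $\mu_i\nu_j$ and its scaled version $Q\mu_i\nu_j$, compactness, and the appeal to polynomial-time LP solvers), which the paper treats as self-evident.
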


When correspondences between surfaces are sought, i.e. when one
imagines one surface as being transformed into the other, one is
interested in restricting $\pi$ to the class of permutation matrices
instead of allowing all bistochastic matrices. (This means that each
entry $\pi_{ij}$ is either 0 or 1.)  In this case the number of
centers $z_i$ must equal that of $w_j$, i.e. $n=N=p$, and it is best
to pick the centers so that $\mu_i=\frac{1}{N}=\nu_j$, for all $i,\
j$. It turns out that this is sufficient to {\em guarantee} (without
restricting the choice of $\pi$ in any way) that the minimizing
$\pi$ is a permutation:

\begin{thm}
If $n=N=p$ and $\mu_i=\frac{1}{N}=\nu_j$, then
\begin{enumerate}
\item
There exists a global minimizer of
{\rm(\ref{e:discrete_kantorovich})} that is a permutation matrix.
\item
If furthermore $Q = \frac{M}{N}$, where $M< N$ is an integer, then
there exists a  global minimizer of {\rm
(\ref{e:discrete_PARTIAL_kantorovich})} $\pi$ such that $\pi_{ij}
\in \{0,1\}$ for each $i,\,j$.
\end{enumerate}
\label{t:relaxation}
\end{thm}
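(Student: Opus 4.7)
The plan is to reduce both parts to a classical extreme-point characterization of polytopes defined by bipartite degree constraints. Since (\ref{e:discrete_kantorovich})--(\ref{e:discrete_kantorovich_CONSTRAINTS}) and (\ref{e:discrete_PARTIAL_kantorovich})--(\ref{e:discrete_PARTIAL_kantorovich_CONSTRAINTS}) are linear programs over compact polyhedra (the feasible sets are bounded, closed, and nonempty), a minimum is attained at a vertex of the feasible polytope. It therefore suffices to identify the vertices in each case as $\{0,1\}$-valued matrices (up to the obvious $1/N$ scaling that makes the row and column marginals equal $1/N$). Throughout I would work with the rescaled variable $\Pi := N\pi$, so the feasible sets become polytopes with integer right-hand sides.

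For part (1), after rescaling the constraints in (\ref{e:discrete_kantorovich_CONSTRAINTS}) read $\sum_i \Pi_{ij}=1$, $\sum_j \Pi_{ij}=1$, $\Pi_{ij}\ge 0$; this is exactly the Birkhoff polytope $B_N$ of doubly stochastic $N\times N$ matrices. By the Birkhoff--von Neumann theorem, the extreme points of $B_N$ are precisely the $N!$ permutation matrices. Since the objective $\sum_{ij} d_{ij}\pi_{ij} = N^{-1}\sum_{ij} d_{ij}\Pi_{ij}$ is linear, its minimum over $B_N$ is attained at a vertex, i.e.\ at a permutation matrix $\Pi^*$; then $\pi^* = \Pi^*/N$ is an optimizer of (\ref{e:discrete_kantorovich})--(\ref{e:discrete_kantorovich_CONSTRAINTS}) with the required form.

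For part (2), the rescaled feasible set is
\[
P_M \,=\, \bigl\{\Pi\in\mathbb{R}^{N\times N}_{\ge 0} \ :\ {\textstyle\sum_i}\Pi_{ij}\le 1,\ {\textstyle\sum_j}\Pi_{ij}\le 1,\ {\textstyle\sum_{ij}}\Pi_{ij}=M\bigr\}.
\]
I claim its vertices have entries in $\{0,1\}$, i.e.\ are partial permutation matrices with exactly $M$ ones. To prove this I would \emph{lift} $\Pi$ to an $(N+1)\times(N+1)$ matrix $\widetilde\Pi$ by adjoining slack variables
\[
\widetilde\Pi_{i,N+1} \,=\, 1-{\textstyle\sum_{j\le N}}\Pi_{ij}, \qquad \widetilde\Pi_{N+1,j} \,=\, 1-{\textstyle\sum_{i\le N}}\Pi_{ij}, \qquad \widetilde\Pi_{N+1,N+1} \,=\, 0,
\]
so that every row and column of $\widetilde\Pi$ sums to either $1$ (rows/columns $1,\dots,N$) or $N-M$ (the last row and last column). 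The constraint matrix of this extended system is the node-edge incidence matrix of a bipartite graph, which is totally unimodular; together with integer right-hand sides this forces every vertex of the lifted polytope, hence every vertex of $P_M$, to be integer-valued, and the only integer matrices in $P_M$ are partial permutation matrices with $M$ ones. Rescaling by $1/N$ gives the claim.

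The main obstacle, to the extent there is one, is justifying the integrality of vertices of $P_M$ without hand-waving. The total-unimodularity route above is clean, but if one prefers a self-contained argument one can give a cycle-switching proof: at an extreme point $\Pi^*$, let $F$ be the bipartite graph of fractional entries $0<\Pi^*_{ij}<1$; any cycle in $F$ produces an alternating $\pm\varepsilon$ perturbation that preserves all marginals and the total mass, contradicting extremality, so $F$ is a forest; an induction on leaves then shows the degree constraints $\sum_j \Pi^*_{ij}\le 1$, $\sum_i \Pi^*_{ij}\le 1$, combined with the fact that integer entries fill out the remaining row/column budget, force each leaf's fractional value to be integer, a contradiction. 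Either approach establishes the theorem; polynomial-time computability follows from the earlier theorem applied to (\ref{e:discrete_kantorovich}) and (\ref{e:discrete_PARTIAL_kantorovich}).
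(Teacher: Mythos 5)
Your part (1) is essentially the paper's proof verbatim: rescale by $N$, identify the feasible set with the Birkhoff polytope, invoke Birkhoff--von Neumann, and observe that a linear objective attains its minimum at a vertex. Nothing to add there.

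For part (2) you land on the same underlying tool as the paper --- the integrality of vertices of a polytope with a totally unimodular constraint matrix and integer right-hand sides, i.e.\ the content of the Hoffman--Kruskal theorem --- but you reach it by a genuinely different and cleaner route. The paper (in its Appendix C) writes the system $\sum_i \pi_{ij}\le 1$, $\sum_j \pi_{ij}\le 1$, $\sum_{ij}\pi_{ij}=M$, $\pi_{ij}\ge 0$ as $\MM\pi\le b$ and then \emph{directly} verifies that $\MM$ is totally unimodular; this requires peeling off the trivial rows, recognizing the top $2N$ rows as the bipartite incidence matrix, and then running a somewhat delicate induction to show that appending a row of all ones (the ``$\sum_{ij}\pi_{ij}=M$'' row) does not break total unimodularity. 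You instead absorb the slack in the row and column constraints into a dummy row and column, producing an affine bijection between the feasible polytope and a balanced $(N{+}1)\times(N{+}1)$ transportation polytope with the $(N{+}1,N{+}1)$ cell forbidden, integer margins $(1,\dots,1,N{-}M)$, and constraint matrix equal to the incidence matrix of a bipartite graph --- whose total unimodularity is a textbook fact, no induction needed. Since the lift is an affine bijection of polytopes it carries vertices to vertices (you might say this explicitly rather than just ``hence''), and the rest follows. This sidesteps the paper's by-hand verification entirely and, as a side benefit, immediately yields the paper's remark that after deleting all-zero rows and columns what remains is an $M\times M$ permutation matrix. Your alternative cycle-cancellation sketch is the standard self-contained proof and would also work, but the lifting argument is the cleaner of the two.
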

\begin{rem}
In the second case, where $\pi_{ij} \in \{0,1\}$ for each $i,\,j$
and $\sum_{i,j=1}^N \pi_{ij}=M$, $\pi$ can still be viewed as a
permutation of $M$ objects, ``filled up with zeros''.  That is, if
the zero rows and columns of $\pi$ (which must exist, by the pigeon
hole principle) are removed, then the remaining $M \times M$ matrix
is a permutation.
\end{rem}
\begin{proof}
We first note that in both cases, we can simply renormalize each
$\mu_i$ and $\nu_j$ by $N$, leading to the rescaled systems
\begin{equation}
  \left \{ \begin{array}{c}
             \sum_i \pi_{ij} = 1 \\
             \sum_j \pi_{ij} = 1 \\
             \pi_{ij} \geq 0
           \end{array}
   \right . \mbox{\hspace{1 in}}
  \left \{\begin{array}{c}
         \sum_i \pi_{ij} \leq 1 \\
             \sum_j \pi_{ij} \leq 1 \\
             \sum_{i,j} \pi_{ij} = M  \\
             \pi_{ij} \geq 0
\end{array}
   \right .
\label{e:discrete_kantorovich_CONSTRAINTS_disk}
\end{equation}
To prove the first part, we note that the left system in
(\ref{e:discrete_kantorovich_CONSTRAINTS_disk}) defines a convex
polytope in the vector space of matrices that is exactly the
Birkhoff polytope of bistochastic matrices. By the Birkhoff-Von
Neumann Theorem \cite{Lovasz86} every  bistochastic matrix is a
convex combination of the permutation matrices, i.e. each $\pi$
satisfying the left system in
(\ref{e:discrete_kantorovich_CONSTRAINTS_disk}) must be of the form
$\sum_k c_k\tau^k$, where the $\tau^k$ are the $N!$ permutation
matrices for $N$ objects, and $\sum_k c_k = 1$, with  $c_k \geq 0$.
The minimizing $\pi$ in this polytope for the linear functional
(\ref{e:discrete_kantorovich}) must thus be of this form as well. It
follows that at least one $\tau^k$ must also minimize
(\ref{e:discrete_kantorovich}), since otherwise we would obtain the
contradiction
\begin{equation}\label{e:linear_extrermas_at_vertices}
    \sum_{ij}d_{ij}\pi_{ij} = \sum_k c_k \Big (\sum_{ij}d_{ij} \tau^k_{ij} \Big ) \geq \min_k \Big \{ \sum_{ij}d_{ij} \tau^k_{ij} \Big \} > \sum_{i,j}\,d_{ij}\,\pi_{ij} ~.
\end{equation}

The second part can be proved along similar steps: the right system
in (\ref{e:discrete_kantorovich_CONSTRAINTS_disk}) defines a convex
polytope in the vector space of matrices; it follows that every
matrix that satisfies the system of constraints is a convex
combination of the extremal points of this polytope. It suffices to
prove that these extreme points are exactly those matrices that
satisfy the constraints and have entries that are either 0 or 1
(this is the analog of the Birkhoff-von Neumann theorem for this
case; we prove this generalization in a lemma in Appendix C); the
same argument as above then shows that there must be at least one
extremal point where the linear functional
(\ref{e:discrete_kantorovich}) attains its minimum.
\end{proof}

This means that, when we seek correspondences between two surfaces,
there is no need to {\em impose} the (very nonlinear) constraint on
$\pi$ that it be a permutation matrix; one can simply use a linear
program to solve either , with Theorem \ref{t:relaxation}
guaranteeing that the minimizer for the ``relaxed'' problem {\rm
(\ref{e:discrete_kantorovich})-(\ref{e:discrete_kantorovich_CONSTRAINTS})}
or {\rm (\ref{e:discrete_PARTIAL_kantorovich})-
(\ref{e:discrete_PARTIAL_kantorovich_CONSTRAINTS})} is of the
desired type if $n=N=p$ and $\mu_i=\frac{1}{N}=\nu_j$.

% ----------------------------------------------------------------
\subsection{Consistency}
In our schemes to compute the surface transportation distance, for
example by solving (\ref{e:discrete_PARTIAL_kantorovich}), we have
so far not included any constraints on the regularity of the
resulting optimal transportation plan $\pi^*$. When computing the
distance between a surface and a reasonable deformation of the same
surface, one does indeed find, in practice, that the minimizing
$\pi^*$ is fairly smooth, because neighboring points have similar
neighborhoods. There is no guarantee, however, that this has to
happen. Moreover, we will be interested in comparing surfaces that
are far from (almost) isometric, given by noisy datasets. Under such
circumstances, the minimizing $\pi^*$ may well ``jump around''. In
this subsection we propose a regularization procedure to avoid such
behavior.

Computing how two surfaces best correspond makes use of the values
of the ``distances in similarity'' $d^R_{\mu,\nu}(z_i,w_j)$ between
pairs of points that ``start'' on one surface and ``end'' on the
other; computing these values relies on finding a minimizing \Mbs
transformation for the functional (\ref{e:d_mu,nu(z,w)_def}). We can
keep track of these minimizing \Mbs transformations $m_{ij}$ for the
pairs of points $(z_i,w_j)$ proposed for optimal correspondence by
the optimal transport algorithm described above. Correspondence
pairs $(i,j)$ that truly participate in some close-to-isometry map
will typically have \Mbs transformations $m_{ij}$ that are very
similar. This suggests a method of filtering out possibly mismatched
pairs, by retaining only the set of correspondences $(i,j)$ that
cluster together within the M\"{o}bius group.

There exist many ways to find clusters. In our applications, we
gauge how far each \Mbs transformation $m_{ij}$ is from the others
by computing a type of $\ell_1$ variance:
\begin{equation}\label{e:variance_function}
    E_V(i,j) = \sum_{(k,\ell)}\norm{m_{ij} - m_{k\ell}},
\end{equation}
where the norm is the Frobenius norm (also called the
Hilbert-Schmidt norm) of the $2\times 2$ complex matrices
representing the M\"{o}bius transformations, after normalizing them
to have determinant one. We then use $E_V(i,j)$ as a consistency
measure of the corresponding pair $(i,j)$.

% ----------------------------------------------------------------
\section{Examples and comments}
\label{s:examples}
\begin{figure}[h]
%\begin{figure}[h]
\centering
%\begin{table}
\begin{tabular}{rcccc}
\includegraphics[width=0.2\columnwidth]{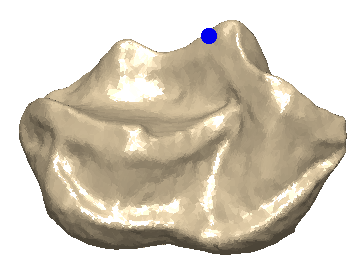} &
\includegraphics[width=0.2\columnwidth]{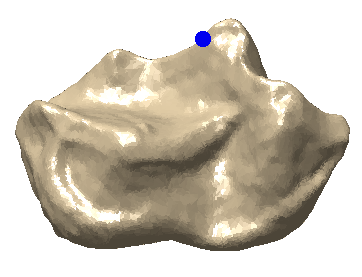} &
\includegraphics[width=0.2\columnwidth]{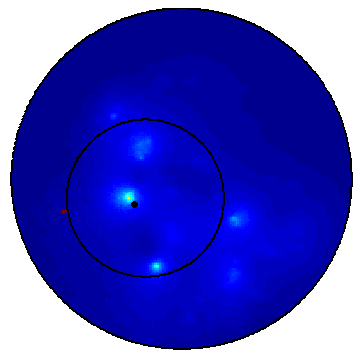} &
\includegraphics[width=0.2\columnwidth]{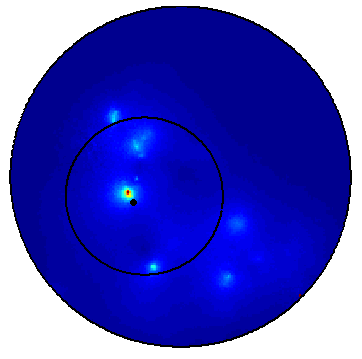} &
\includegraphics[width=0.2\columnwidth]{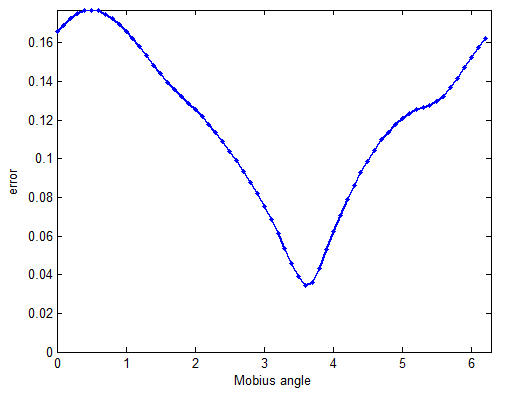} \\
(a) &&&& Good pair (a)\\
\includegraphics[width=0.2\columnwidth]{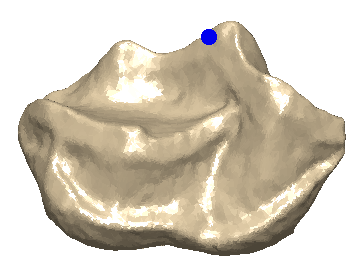} &
\includegraphics[width=0.2\columnwidth]{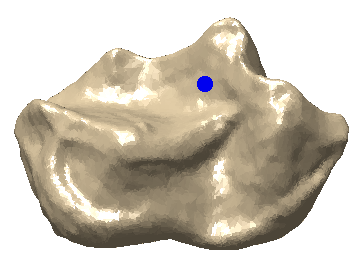} &
\includegraphics[width=0.2\columnwidth]{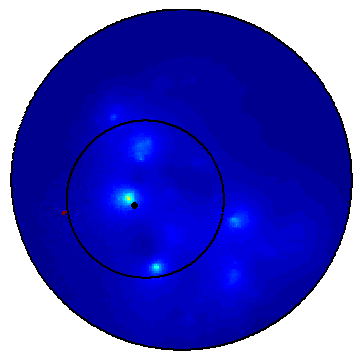} &
\includegraphics[width=0.2\columnwidth]{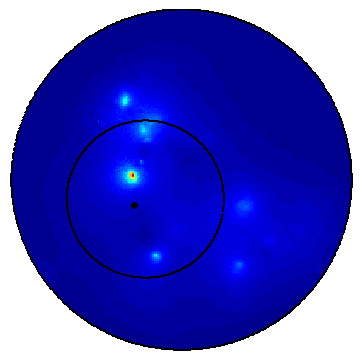} &
\includegraphics[width=0.2\columnwidth]{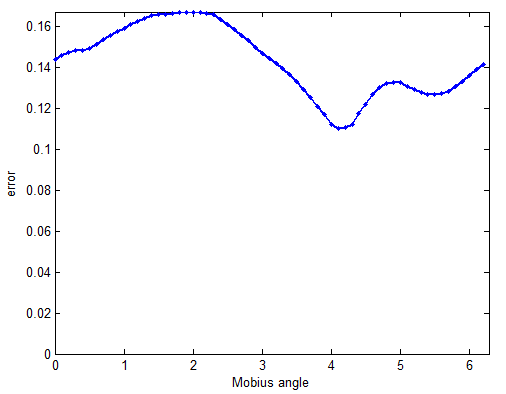} \\
(b) & & &  & Erroneous pair (b)
\end{tabular}
\caption{Calculation of the local distance
$d^R_{\mu,\nu}(\cdot,\cdot)$ between pairs of points on two
different surfaces (each row shows a different pair of points; the
two surfaces are the same in the top and bottom rows). The first row
shows a ``good'' pair of points together with the alignment of the
conformal densities $\mu,m^*\nu$ based on the best M\"{o}bius
transformation $m$ minimizing $\int_\D \norm{\mu-m^*\nu}d\vol_\M$.
The plot of this latter integral as a function of $m$ (parameterized
by $\sigma \in [0,2\pi)$, see (\ref{e:disk_mobius})) is shown in the
right-most column. The second row shows a ``bad'' correspondence
which indeed leads to a higher local distance
$d^R_{\mu,\nu}$.}\label{fig:good_bad_pair_correspondence}

%\end{table}
\end{figure}

In this section we present a few experimental results using our new
surface comparison operator. These concern an application to
biology; in a case study of the use of our approach to the
characterization of mammals by the surfaces of their molars, we
compare high resolution scans of the masticating surfaces of molars
of several lemurs, which are small primates living in Madagascar.
Traditionally, biologists specializing in this area carefully
determine landmarks on the tooth surfaces, and measure
characteristic distances and angles involving these landmarks. A
first stage of comparing different tooth surfaces is to identify
correspondences between landmarks.  Figure
\ref{fig:good_bad_pair_correspondence} illustrates how
$d^R_{\mu,\nu}(z,w)$ can be used to find corresponding pairs of
points on two surfaces by showing both a ``good'' and  a ``bad''
corresponding pair. The left two columns of the figure show the pair
of points in each case; the two middle columns show the best fit
after applying the minimizing M\"{o}bius on the corresponding disk
representations; the rightmost column plots $ \int_{\Omega_{z_0,R}}
\,|\,\mu(z) - (m_{z_0,w_0,\sigma}^*\nu)(z)\,|\, d\vol_H(z)$, the
value of the ``error'', as a function of parameter $\sigma$,
parametrizing the \Mbs transformations that map a give point $z_0$
to another given point $w_0$ (see Lemma
\ref{lem:a_and_tet_formula_in_mobius_interpolation}). The ``best''
corresponding point $w_0$ for a given $z_0$ is the one that produces
the lowest minimal value for the error, i.e. the lowest
$d^R_{\mu,\nu}(z_0,w_0)$.

Figure \ref{fig:120_corrs} show the top 120 most consistent
corresponding pairs (in groups of 20) for two molars belonging to
lemurs of different species. Corresponding pairs are indicated by
highlighted points of the same color. These correspondences have
surprised the biologists from whom we obtained the data sets; their
experimental measuring work, which incorporates finely balanced
judgment calls, had defied earlier automatizing attempts.

Once the differences and similarities between molars from different
animals have been quantified, they can be used (as part of an
approach) to classify the different individuals. Figure
\ref{fig:distance_graph_embedded} illustrates a preliminary result
from \cite{Daubechies10} that illustrates the possibility of such
classifications based on the distance operator between surfaces
introduced in this paper. The figure illustrates the pairwise
distance matrix for eight molars, coming from individuals in four
different species (indicated by color). The clustering was based on
only the distances between the molar surfaces; it clearly agrees
with the clustering by species, as communicated to us by the
biologists from whom we obtained the data sets.

One final comment regarding the computational complexity of our
method. There are two main parts: the preparation of the distance
matrix $d_{ij}$ and the linear programming optimization. For the
linear programming part we used a Matlab interior point
implementation with $N^2$ unknowns, where $N$ is the number of
points spread on the surfaces. In our experiments, the optimization
typically terminated after $15-20$ iterations for $N=150-200$
points, which took about 2-3 seconds. The computation of the
similarity distance $d_{ij}$ took longer, and was the bottleneck in
our experiments. If we spread $N$ points on each surface, and use
them all (which was usually not necessary) to interpolate the
conformal factors $\Gamma_\mu, \Gamma_\nu$, if we use $P$ points in
the integration rule, and take $L$ points in the M\"{o}bius
discretization (see Section \ref{s:the_discrete_case_implementation}
for details) then each approximation of  $d^R_{\mu,\nu}(z_i,w_j)$ by
(\ref{e:discrete_approx__d_mu_nu(z_i,w_j)_bis}) requires $O(L \cdot
P \cdot N)$ calculations, as each evaluation of
$\Gamma_\mu,\Gamma_\nu$ takes $O(N)$ and we need $L\cdot P$ of
those. Since we have $O(N^2)$ distances to compute, the computation
complexity for calculating the similarity distance matrix $d_{ij}$
is $O(L\cdot P \cdot N^3)$. In practice this step was the most time
consuming and took around two hours for $N=300$. However, we have
not used any code optimization and we believe these times can be
reduced significantly.

%To conclude, we defined and introduced an algorithm for measuring
%isometry-invariant distance between disk-type surfaces, and finding
%point correspondences automatically. In term of future work, we
%would like to expand this new type of distances to wider class of
%surfaces. We would also like to further analyze the algorithm and
%its convergence properties. Finally, we would like to use the
%discrete correspondences to construct a continuous map between the
%surfaces.

\begin{figure}[h]
%\begin{figure}[h]
\centering
%\begin{table}
\begin{tabular}{cccc}
\includegraphics[width=0.3\columnwidth]{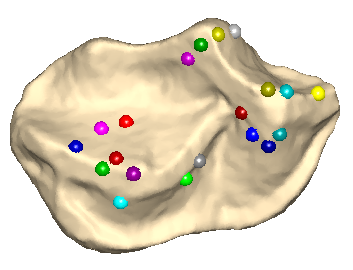} &
\includegraphics[width=0.3\columnwidth]{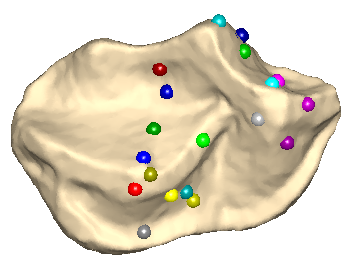} &
\includegraphics[width=0.3\columnwidth]{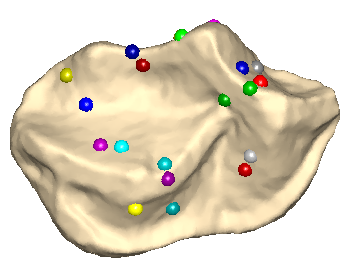} \\
\includegraphics[width=0.3\columnwidth]{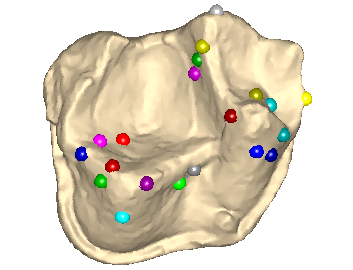} &
\includegraphics[width=0.3\columnwidth]{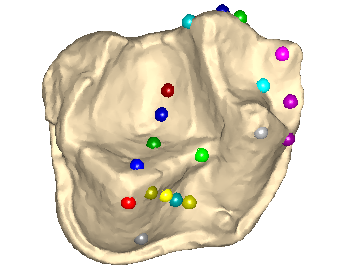} &
\includegraphics[width=0.3\columnwidth]{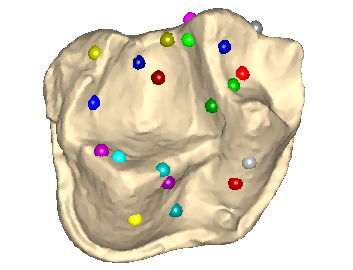} \\
\hline
\includegraphics[width=0.3\columnwidth]{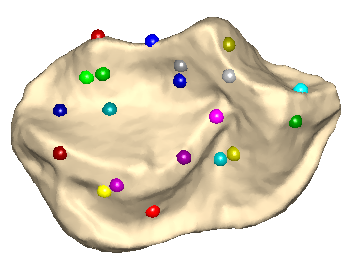} &
\includegraphics[width=0.3\columnwidth]{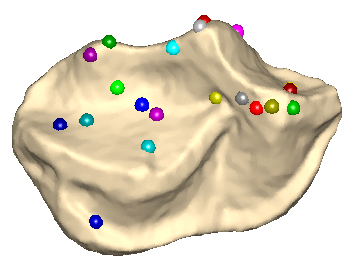} &
\includegraphics[width=0.3\columnwidth]{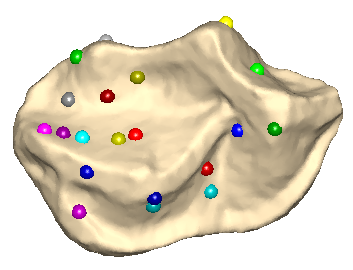} \\
\includegraphics[width=0.3\columnwidth]{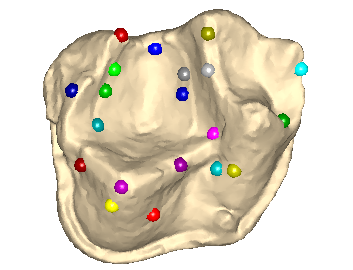} &
\includegraphics[width=0.3\columnwidth]{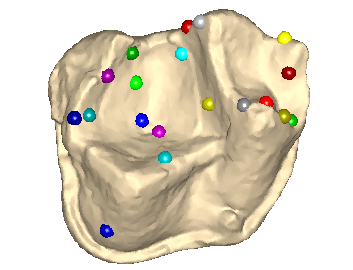} &
\includegraphics[width=0.3\columnwidth]{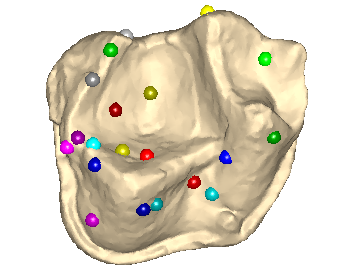} \\
\end{tabular}
\caption{The top 120 most consistent corresponding pairs between two
molar teeth models.} \label{fig:120_corrs}
%\end{table}
\end{figure}

\begin{figure}[h]
%\begin{figure}[h]
\centering
%\begin{table}
\includegraphics[width=0.9\columnwidth]{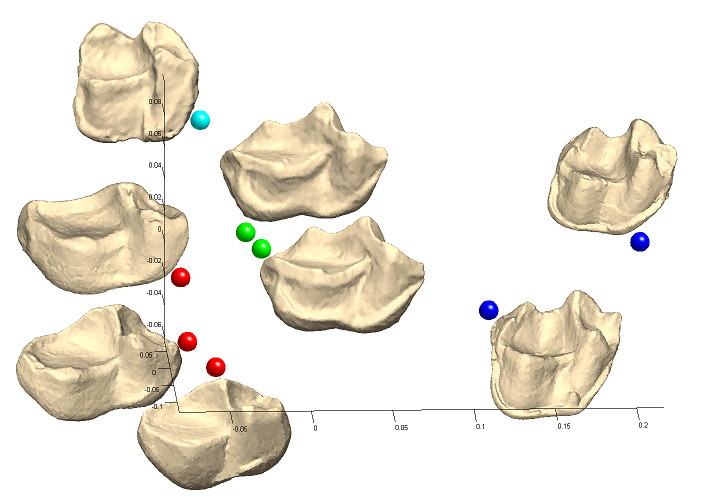}
\caption{Embedding of the distance graph of eight teeth models using
multi-dimensional scaling. Different colors represent different
lemur species. The graph suggests that the geometry of the teeth
might suffice to classify species.}
\label{fig:distance_graph_embedded}
%\end{table}
\end{figure}

%% -----------------------------------------
%\section{Comclusions}

%% -----------------------------------------
\section{Acknowledgments}
The authors would like to thank C\'{e}dric Villani and Thomas
Funkhouser for valuable discussions, and Jesus Puente for helping
with the implementation. We are grateful to Jukka Jernvall, Stephen
King, and Doug Boyer for providing us with the tooth data sets, and
for many interesting comments. ID gratefully acknowledges (partial)
support for this work by NSF grant DMS-0914892, and by an AFOSR
Complex Networks grant; YL thanks the Rothschild foundation for
postdoctoral fellowship support.

\bibliographystyle{amsplain}
\bibliography{kantorovich_for_surfaces}

\providecommand{\bysame}{\leavevmode\hbox to3em{\hrulefill}\thinspace}
\providecommand{\MR}{\relax\ifhmode\unskip\space\fi MR }
% \MRhref is called by the amsart/book/proc definition of \MR.
\providecommand{\MRhref}[2]{%
  \href{http://www.ams.org/mathscinet-getitem?mr=#1}{#2}
}
\providecommand{\href}[2]{#2}
\begin{thebibliography}{10}

\bibitem{BBK06}
R.~Kimmel A.~M.~Bronstein, M. M.~Bronstein, \emph{Generalized multidimensional
  scaling: a framework for isometry-invariant partial surface matching}, Proc.
  National Academy of Sciences (PNAS) \textbf{103} (2006), no.~5, 1168--1172.

\bibitem{Jukka07}
Mikael Fortelius Jukka~Jernvall Alistair R.~Evans, Gregory P.~Wilson,
  \emph{High-level similarity of dentitions in carnivorans and rodents}, Nature
  \textbf{445} (2007), 78--81.

\bibitem{Brenner:2008:MTF}
Susanne~C. Brenner and L.~Ridgway Scott, \emph{The mathematical theory of
  finite element methods}, third ed., Texts in applied mathematics, vol.~15,
  2008.

\bibitem{BBK2007non_rigid_book}
Alexander Bronstein, Michael Bronstein, and Ron Kimmel, \emph{Calculus of
  nonrigid surfaces for geometry and texture manipulation}, IEEE Transactions
  on Visualization and Computer Graphics \textbf{13} (2007), no.~5, 902--913.

\bibitem{Cela98}
E.~Cela, \emph{The quadratic assignment problem: Theory and algorithms
  (combinatorial optimization)}, Springer, 1998.

\bibitem{Dziuk88}
Gerhard Dziuk, \emph{Finite elements for the $\mbox{B}$eltrami operator on
  arbitrary surfaces}, vol. 1357, Springer Berlin / Heidelberg, 1988.

\bibitem{eldar97farthest}
Y.~Eldar, M.~Lindenbaum, M.~Porat, and Y.~Zeevi, \emph{The farthest point
  strategy for progressive image sampling}, 1997.

\bibitem{Fischl99}
Bruce Fischl, Martin~I. Sereno, Roger B.~H. Tootell, and Anders~M. Dale,
  \emph{High-resolution intersubject averaging and a coordinate system for the
  cortical surface}, Hum. Brain Mapp \textbf{8} (1999), 272--284.

\bibitem{Gromov06}
Mikhail Gromov, M.~Katz, P.~Pansu, and S.~Semmes, \emph{Metric structures for
  riemannian and non-riemannian spaces}, Birkh\"{a}user Boston, December 2006.

\bibitem{Gu03}
Xianfeng Gu and Shing-Tung Yau, \emph{Global conformal surface
  parameterization}, SGP '03: Proceedings of the 2003 Eurographics/ACM SIGGRAPH
  symposium on Geometry processing (Aire-la-Ville, Switzerland, Switzerland),
  Eurographics Association, 2003, pp.~127--137.

\bibitem{Farkas92}
Irwin~Kra Hershel M.~Farkas, \emph{Riemann surfaces}, Springer, 1992.

\bibitem{Hildebrandt06}
Hildebrandt, Klaus, Polthier, Konrad, Wardetzky, and Max, \emph{On the
  convergence of metric and geometric properties of polyhedral surfaces},
  Geometriae Dedicata \textbf{123} (2006), no.~1, 89--112.

\bibitem{Daubechies10}
Jesus~Puente Ingrid~Daubechies, Yaron~Lipman, \emph{Metric analysis of the
  disc-durface manifold}, In preparation (2010).

\bibitem{Kantorovich1942}
L.~Kantorovich, \emph{On the translocation of masses}, C.R. (Dokl.) Acad. Sci.
  URSS (N.S.) \textbf{37} (1942), 199--201.

\bibitem{Lovasz86}
M.D.~Plummer L.~Lov\'{a}sz, \emph{Matching theory}, North-Holland, 1986.

\bibitem{Lancaster00}
Parsons LM Liotti M Freitas CS Rainey L Kochunov PV Nickerson D Mikiten SA
  Fox~PT Lancaster~JL, Woldorff~MG, \emph{Automated talairach atlas labels for
  functional brain mapping}, Human Brain Mapping \textbf{10} (2000), 120--131.

\bibitem{Lipman:TR2009:approxOptimal}
Yaron Lipman, \emph{Approximation of optimal transport cost}, Technical report
  (2009).

\bibitem{Lipman:2009:MVF}
Yaron Lipman and Thomas Funkhouser, \emph{Mobius voting for surface
  correspondence}, ACM Transactions on Graphics (Proc. SIGGRAPH) \textbf{28}
  (2009), no.~3.

\bibitem{memoli07}
Facundo Memoli, \emph{On the use of gromov-hausdorff distances for shape
  comparison}, Symposium on Point Based Graphics (2007).

\bibitem{memoli05}
Facundo M\'emoli and Guillermo Sapiro, \emph{A theoretical and computational
  framework for isometry invariant recognition of point cloud data}, Found.
  Comput. Math. \textbf{5} (2005), no.~3, 313--347.

\bibitem{Monge1781}
G.~Monge, \emph{Mémoire sur la théorie des déblais et de remblais}, Histoire de
  l'Académie Royale des Sciences de Paris, avec les Mémoires de Mathématique et
  de Physique pour la même année (1781), 666--704.

\bibitem{Billingsley68}
Billingsley Patrick, \emph{Convergence of probability measures}, John Wiley \&
  Sons, 1968.

\bibitem{Pinkall93}
Ulrich Pinkall and Konrad Polthier, \emph{Computing discrete minimal surfaces
  and their conjugates}, Experimental Mathematics \textbf{2} (1993), 15--36.

\bibitem{Polthier00}
Konrad Polthier, \emph{Conjugate harmonic maps and minimal surfaces}, Preprint
  No. 446, TU-Berlin, SFB 288 (2000).

\bibitem{Polthier05}
\bysame, \emph{Computational aspects of discrete minimal surfaces}, Global
  Theory of Minimal Surfaces, Proc. of the Clay Mathematics Institute 2001
  Summer School, David Hoffman (Ed.), CMI/AMS (2005).

\bibitem{Rubner2000-TEM}
Y.~Rubner, C.~Tomasi, and L.~J. Guibas, \emph{The earth mover's distance as a
  metric for image retrieval}, International Journal of Computer Vision
  \textbf{40} (2000), no.~2, 99--121.

\bibitem{Haxby09}
Conroy B Bryan RE Ramadge PJ Haxby~JV Sabuncu~MR, Singer~BD,
  \emph{Function-based intersubject alignment of human cortical anatomy}, Cereb
  Cortex. (2009).

\bibitem{Schrijver08}
Alexander Schrijver, \emph{A course in combinatorial optimization, course
  note}, 2008.

\bibitem{Springer57}
George Springer, \emph{Introduction to riemann surfaces}, AMS Chelsea
  Publishing, 1981.

\bibitem{Villani:2003}
Cedric Villani, \emph{Topics in optimal transportation (graduate studies in
  mathematics, vol. 58)}, {American Mathematical Society}, March 2003.

\bibitem{Gu2008_a}
W.~Zeng, X.~Yin, Y.~Zeng, Y.~Lai, X.~Gu, and D.~Samaras, \emph{3d face matching
  and registration based on hyperbolic ricci flow}, CVPR Workshop on 3D Face
  Processing (2008), 1--8.

\bibitem{Gu2008_b}
W.~Zeng, Y.~Zeng, Y.~Wang, X.~Yin, X.~Gu, and D.~Samaras, \emph{3d non-rigid
  surface matching and registration based on holomorphic differentials}, The
  10th European Conference on Computer Vision (ECCV) (2008).

\end{thebibliography}

%----------------------------
\appendix
\renewcommand{\thesection}{ \Alph{section}}
\section{}
\label{a:appendix A}
\renewcommand{\thesection}{\Alph{section}}
%\section*{Appendix A}
This Appendix contains some technical proofs of Lemmas and Theorems
stated in section \ref{s:optimal_vol_trans_for_surfaces}, and
\ref{s:the_discrete_case_implementation}. We start with proving the
list of properties of the distance function $d^R_{\mu,\nu}(z,w)$
given in Theorem  \ref{thm:properties_of_d}:

{\bf Theorem} \ref{thm:properties_of_d}

{\em The distance function $d^R_{\mu,\nu}(z,w)$ satisfies the
following properties
\begin{table}[ht]
\begin{tabular}{c l l}
{\rm (1)} & $~d^R_{m^*_1\mu,m^*_2\nu}(m^{-1}_1(\z),m^{-1}_2(\w)) = d^R_{\mu,\nu}(\z,\w)~$ & {\rm Invariance under (well-defined)}\\
& & {\rm M\"{o}bius changes of coordinates} \\
&&\\
{\rm (2)} & $~d^R_{\mu,\nu}(\z,\w) = d^R_{\nu,\mu}(\w,\z)~$ & {\rm Symmetry} \\
&&\\
{\rm (3)} & $~d^R_{\mu,\nu}(\z,\w) \geq 0~$ & {\rm Non-negativity} \\
&&\\
{\rm (4)} & \multicolumn{2}{c}{$\!\!\!\!\!\!~d^R_{\mu,\nu}(\z,\w) = 0 \,\Longrightarrow \, \Omega_{z_0,R}$ {\rm in} $(\D,\mu)$ {\rm and} $\Omega_{w_0,R}$ {\rm in} $(\D,\nu)$ {\rm are isometric} }\\
&&\\
{\rm (5)} & $~d^R_{m^*\nu, \nu}(m^{-1}(\z),\z)=0~$ & {\rm Reflexivity} \\
&&\\
{\rm (6)} & $~d^R_{\mu_1,\mu_3}(z_1,z_3) \leq
d^R_{\mu_1,\mu_2}(z_1,z_2) + d^R_{\mu_2,\mu_3}(z_2,z_3)~$ & {\rm
Triangle inequality}
\end{tabular}
\end{table}
}
\begin{proof}
%+++++++++++++++++

For (1), denote $m_1^{-1}(z_0)=z_1$, and $m_2^{-1}(w_0)=w_1$. Then
\begin{align*}
d^R_{m_1^* \mu, m_2^* \nu}(z_1,w_1) &=
\mathop{\inf}_{m(z_1)=w_1} \int_{\Omega_{z_1,R}}|m_1^*\mu(z)-m^*m_2^*\nu(z)|d\vol_H(z) \\
&= \mathop{\inf}_{m(z_1)=w_1}
\int_{\Omega_{z_1,R}}|\mu(m_1(z))-\nu(m_2(m(z)))|d\vol_H(z).
\end{align*}
Next set $\widetilde{m} = m_2 \circ m \circ m_1^{-1}$. Note that
$\widetilde{m}(z_0)=w_0$. Plugging $m_2(m(z))=\widetilde{m}(m_1(z))$
into the integral and carrying out the change of variables
$m_1(z)=z'\,$, we obtain
$$
\mathop{\inf}_{m(z_1)=w_1} \int_{\Omega_{z_1,R}}
|\,\mu(z')-\nu(\widetilde{m}(z'))\,|d\vol_H(z') =
\mathop{\inf}_{\widetilde{m}(z_0)=w_0}
\int_{\Omega_{z_0,R}}|\,\mu(z')-\nu(\widetilde{m}(z'))\,|\,d\vol_H(z').
$$

%+++++++++++++++++

For (2), we use Lemma \ref{lem:symmetry_of_d_integral} and equations
(\ref{e:pullback_of_metric_density_mu_by_mobius}),
(\ref{e:push_forward_of_metric_density}) to write
\begin{align*}
d^R_{\mu,\nu}(z_0,w_0)&=
\mathop{\inf}_{m(z_0)=w_0} \int_{\Omega_\z,R} |\mu(z)-m^*\nu(z)|d\vol_H(z) \\
&= \mathop{\inf}_{m(z_0)=w_0} \int_{\Omega_\w,R}
|(m^{-1})^*\mu(w)-\nu(w)|d\vol_H(w) = d^R_{\nu,\mu}(w_0,z_0).
\end{align*}

%+++++++++++++++++

(3) and (4) are immediate from the definition of $d^R_{\mu,\nu}$.

%+++++++++++++++++

(5) follows from the observation that the minimizing $m$ (in the
definition (\ref{e:d_mu,nu(z,w)_def}) of $d^R_{\mu,\nu}$) is $m_1$
itself, for which the integrand, and thus the whole integral
vanishes identically.

%+++++++++++++++++

For (6), let $m_1$ be a M\"{o}bius transformation such that
$m_1(z_1)=z_2$, and $m_2$ such that $m_2(z_2)=z_3$. Setting $m=m_2
\circ m_1$, we have
\begin{align}
d^R_{\mu_1,\mu_3}(z_1,z_3) &\leq \int_{\Omega_{z_1,R}}
|\,\mu_1(z) - m^* \mu_3(z)\,|\,d\vol_H(z) \nonumber\\
&\leq \int_{\Omega_{z_1,R}}|\,\mu_1(z) - m_1^*
\mu_2(z)\,|\,d\vol_H(z) + \int_{\Omega_{z_1,R}}|\,m_1^* \mu_2(z) -
m^* \mu_3(z)\,|\,d\vol_H(z)\,. \label{intermed}
\end{align}

The second term in (\ref{intermed}) can be rewritten as (using Lemma
\ref{lem:symmetry_of_d_integral}, the change of coordinates
$m_1(z_1)=z_2$ and the observation $m^*=m_1^* m_2^*$)
\begin{align*}
\int_{\Omega_{z_1,R}}|\,m_1^* \mu_2(z) - m^* \mu_3(z)\,|\,d\vol_H(z)
&=\int_{\Omega_{z_2,R}} |\,m_{1*}m_1^*\mu_2(w) - m_{1*}m_1^* m_2^*
\mu_3(w)\,|\,
d\vol_H(w)\\
&=\int_{\Omega_{z_2,R}} |\,\mu_2(w) - m_2^* \mu_3(w)\,|\,d\vol_H(w).
\end{align*}
We have thus
\[
d^R_{\mu_1,\mu_3}(z_1,z_3) \leq \int_{\Omega_{z_1,R}}|\,\mu_1(z) -
m_1^* \mu_2(z)\,|\,d\vol_H(z) + \int_{\Omega_{z_2,R}} |\,\mu_2(w) -
m_2^* \mu_3(w)\,|\,d\vol_H(w)~,
\]
and this for any $m_1,\,m_2 \in \Md$ such that $m_1(z_1)=z_2$ and
$m_2(z_2)=z_3$. Minimizing over $m_1$ and $m_2$ then leads to the
desired result.

\end{proof}

% -------------------------------------
Next we prove the continuity properties of the function
$\Phi(z_0,w_0,\sigma) =
\int_{\Omega(z_0,R)}\,|\,\mu(z)-\nu(m_{z_0,w_0,\sigma}(z))\,|\,d\vol_H(z)$,
stated in Lemma \ref{lem:auxiliary}, which were used to prove
continuity of $d^R_{\mu,\nu}$ itself (in Theorem
\ref{thm:continuity_of_d^R_mu,nu}).

{\bf Lemma \ref{lem:auxiliary}} \\
{\em $\bullet$ For each fixed $(z_0,w_0)$ the function $\Phi(z_0,w_0,\cdot)$ is continuous on $S_1$.\\
$\bullet$ For each fixed $\sigma \in S_1$, $
\Phi(\cdot,\cdot,\sigma) $ is continuous on $\D \times \D$.
Moreover, the family $ \Big(\Phi(\cdot,\cdot,\sigma)\Big)_{\sigma
\in S_1} $ is equicontinuous.}

\begin{proof}
We start with the continuity in $\sigma$. We have
\[
\left|\,\Phi(z_0,w_0,\sigma)-\Phi(z_0,w_0,\sigma')\,\right| \leq
\int_{\Omega(z_0,R)}\,|\nu(m_{z_0,w_0,\sigma}(z))-\nu(m_{z_0,w_0,\sigma'}(z))\,|
\,d\vol_H(z)~.
\]
Because $\nu$ is continuous on $\D$, its restriction to the compact
set $\overline{\Omega(w_0,R)}$ (the closure of $\Omega(w_0,R)$) is
bounded. Since the hyperbolic volume of $\Omega(z_0,R)$ is finite,
the integrand is dominated, uniformly in $\sigma'$, by an integrable
function.  Since $m_{z_0,w_0,\sigma}(z)$ is obviously continuous in
$\sigma$, we can use the dominated convergence theorem to conclude.

Since $S^1$ is compact, this continuity implies that the infimum in
the definition of $d^R_{\mu,\nu}$ can be replaced by a minimum:
\[
d^R_{\mu,\nu}(z_0,w_0)=\mathop{\min}_{m(z_0)=w_0}\,
\int_{\Omega(z_0,R)}\,|\,\mu(z)-\nu(m(z))\,|\,d\vol_H(z)~.
\]

Next we prove continuity in $z_0$ and $w_0$ (with estimates that are
uniform in $\sigma$).

Consider two pairs of points, $(z_0,w_0)$ and $(z'_0,w'_0) \in \D
\times \D$. Then
\begin{align}
&|\,\Phi(z_0,w_0,\sigma)-\Phi(z'_0,w'_0,\sigma)\,|\nonumber\\
&~~~~~=\,\left|\,
\int_{\Omega(z_0,R)}\,|\,\mu(z)-\nu(m_{z_0,w_0,\sigma}(z))\,|\,d\vol_H(z)
- \int_{\Omega(z'_0,R)}\,|\,\mu(u)-\nu(m_{z'_0,w'_0 \sigma}(u)\,|\,d\vol_H(u)\,\right|\nonumber\\
&~~~~~\leq\,
\left|\,\int_{\Omega(z_0,R)}\,|\,\mu(z)-\nu(m_{z_0,w_0,\sigma}(z))\,|\,d\vol_H(z)
-
\int_{\Omega(z_0,R)}\,|\,\mu(m_{z_0,z'_0,1}z)-\nu(m_{z'_0,w'_0,\sigma}
\circ
 m_{z_0,z'_0,1}(z))\,|\,d\vol_H(u)\,\right|\nonumber\\
&~~~~~ \leq \int_{\Omega(z_0,R)}\,
\left(\,|\,\mu(z)-\mu(m_{z_0,z'_0,1}(z))\,| +
|\,\nu(m_{z_0,w_0,\sigma}(z))-\nu(m_{z'_0,w'_0,\sigma}(m_{z_0,z'_0,1}(z)))\,|\,\right)\,
d\vol_H(z) ~.\nonumber \label{interm2}
\end{align}
On the other hand, note that for any $\gamma >0$, $\mu$ and $\nu$
are continuous on the closures of $\Omega(z_0,R+\gamma)$ and
$\Omega(w_0,R+\gamma)$, respectively; since these closed hyperbolic
disks are compact, $\mu$ and $\nu$ are  bounded on these sets. Pick
now $\rho>0$ such that $|z'_0-z_0|<\rho$, $|w'_0-w_0|<\rho$ imply
that $\Omega(z'_0,R)\subset \Omega(z_0,R+\gamma)$ as well as
$\Omega(w'_0,R)\subset \Omega(w_0,R+\gamma)$. It follows that, if
$|z'_0-z_0|<\rho$ and $|w'_0-w_0|<\rho$, then
$|\,\mu(z)-\mu(m_{z_0,z'_0,1}(z)\,|$ and
$|\,\nu(m_{\z_0,w_0,\sigma}(z))-
\nu(m_{z'_0,w'_0,\sigma}(m_{z_0,z'_0,1}(z))) \,|$ are bounded
uniformly for $z \in \Omega(z_0,R)$. Since it is clear from the
explicit expressions (\ref{e:a_of_mobius}) that
$m_{z_0,z'_0,1}(z)\rightarrow z$ and
$m_{z'_0,w'_0,\sigma}(m_{z_0,z'_0,1}(z)) \rightarrow
m_{z_0,w_0,\sigma}(z)$ as $z'_0 \rightarrow z_0$ and $w'_0
\rightarrow w_0$, we can thus invoke the dominated convergence
theorem again to prove continuity of $\Phi(\cdot,\cdot,\sigma)$.

To prove the equicontinuity, we first note that $\nu$ is uniformly
continuous on $\Omega(w_0,R)\cup\Omega(w'_0,R) $, since $\nu$ is
continuous on the compact set $\overline{\Omega(w_0,R+\gamma)}$,
which contains $\Omega(w_0,R)\cup\Omega(w'_0,R)$ for all $w'_0$ that
satisfy $|w'_0-w_0| \leq \rho$. This means that, given any $\eps
>0$, we can find $\delta >0$ such that $|\nu(w)-\nu(w')|\le \eps$
holds for all $w,\,w'$ that satisfy $w,\,w' \in
\Omega(w_0,R)\cup\Omega(w'_0,R)$ and $|w-w'|\leq \delta$. This
implies the desired equicontinuity if we can show that
$|m_{z_0,w_0,\sigma}(z)-m_{z'_0,w'_0,\sigma}(m_{z_0,z'_0,1}(z))|$
can be made smaller than $\delta$, uniformly in $\sigma \in S_1$, by
making
$|z'_0-z_0|+|w'_0-w_0|$ sufficiently small.\\
We first estimate $|m_{z_0,w_0,\sigma}(z)-m_{z_0,w'_0,\sigma}(z)|$.
With the notations of (\ref{e:a_of_mobius}), we have
\begin{align*}
a(z_0,w_0,\sigma)-a(z_0,w'_0,\sigma)&=
\frac{(z_0-w_0\overline{\sigma})(1-\overline{z_0}w'_0\overline{\sigma})-
(z_0-w'_0\overline{\sigma})(1-\overline{z_0}w_0\overline{\sigma})}
{(1-\overline{z_0}w_0\overline{\sigma})(1-\overline{z_0}w'_0\overline{\sigma})}\\
&=\frac{(w_0-w'_0)\overline{\sigma}(|z_0|^2-1)}
{(1-\overline{z_0}w_0\overline{\sigma})(1-\overline{z_0}w'_0\overline{\sigma})}~,
\end{align*}
so that
\[
| a(z_0,w_0,\sigma)-a(z_0,w'_0,\sigma)|\leq \frac{|w_0-w'_0|}
{(1-|z_0|\,|w_0|)[1-|z_0|(|w_0|+\xi)]}\leq \frac{\xi}
{(1-|z_0|\,|w_0|)[1-|z_0|(|w_0|+\xi)]}
\]
when $|w_0-w'_0|<\xi$. It thus suffices to choose $\xi$ so that
$\xi< \zeta(1-|z_0|\,|w_0|)[1-|z_0|(|w_0|+\xi)]$ to ensure that $|
a(z_0,w_0,\sigma)-a(z_0,w'_0,\sigma)|<\zeta$. For the phase factor
$\tau$ in (\ref{e:a_of_mobius}) we obtain
\begin{align*}
 \tau(z_0,w_0,\sigma)-\tau(z_0,w'_0,\sigma)&=
\sigma \,\frac
{(1-\overline{z_0}w'_0\overline{\sigma})(1-z_0\overline{w_0}\sigma)
-
(1-\overline{z_0}w_0\overline{\sigma})(1-z_0\overline{w'_0}\sigma)}
{(1-\overline{z_0}w_0\overline{\sigma})(1-\overline{z_0}w'_0\overline{\sigma})}\\
&=\sigma \, \frac
{(w_0-w'_0)\overline{z_0}\overline{\sigma}-(\overline{w_0}-\overline{w'_0})z_0\sigma
+|z_0|^2(\overline{w_0}w'_0-\overline{w'_0}w_0)}
{(1-\overline{z_0}w_0\overline{\sigma})(1-\overline{z_0}w'_0\overline{\sigma})}\\
&=\sigma \, \frac {(w_0-w'_0)\overline{z_0}\overline{\sigma}-
z_0(\overline{w_0}-\overline{w'_0})\sigma +
|z_0|^2[\overline{w_0}(w'_0-w_0)+w_0(\overline{w_0}-\overline{w'_0}])
}
{(1-\overline{z_0}w_0\overline{\sigma})(1-\overline{z_0}w'_0\overline{\sigma})}~;
\end{align*}
when $\abs{w_0-w'_0}<\xi$, this implies
\[
|\tau(z_0,w_0,\sigma)-\tau(z_0,w'_0,\sigma)| \leq
\frac{|z_0|\,|w_0|\,[2+|z_0|(2|w_0|+\xi)]}
{(1-|z_0|\,|w_0|)[1-|z_0|(|w_0|+\xi)]}\xi~,
\]
which can clearly be made smaller than any $\zeta >0$ by choosing
$\xi$ sufficiently small. All this implies that (use
(\ref{e:a_of_mobius}))
\begin{align*}
|m_{z_0,w_0,\sigma}(z)-m_{z_0,w'_0,\sigma}(z)|
&\leq|\tau(z_0,w_0,\sigma)-\tau(z_0,w'_0,\sigma)|\frac{1+|z|}{1-|z|}
\,+\,|a(z_0,w_0,\sigma)-a(z_0,w'_0,\sigma)|\frac{(1+|z|)^2}
{(1-|z|)^2}\\
&\leq \zeta \,\frac{2(1+|z|)}{(1-|z|)^2},
\end{align*}
which will be  smaller than $\delta/2$, uniformly in $\sigma$, if
$\zeta < \delta (1-|z|^2)/8$; this bound on $\zeta$ in turn
determines the bound to be imposed on the $\xi$ used above. Hence
$|m_{z_0,w_0,\sigma}(z)-m_{z_0,w'_0,\sigma}(z)|<\delta/2$ can be
guaranteed, uniformly in $\sigma$, by choosing $|w_0-w'_0|<\xi$
for sufficiently small $\xi$. \\
One can estimate likewise
\[
|m_{z_0,w'_0,\sigma}(z)-m_{z'_0,w'_0,\sigma}(m_{z'_0,z_0,1}(z))|~,
\]
and show that this too can be made smaller than $\delta/2$,
uniformly in $\sigma$, by imposing sufficiently tight bounds on
$|z'_0-z_0|$ and $|w'_0-w_0|$. Combining all these estimates then
leads to the desired equicontinuity, as indicated earlier.
\end{proof}

To prove Lemma \ref{lem:extension_of_d}, we shall use the following
lemma:\\
\begin{lem}\label{lem:extermal_mobius}
Consider $u_k=e^{\bfi\psi} + \eps_k$, where $\abs{\eps_k}\too 0$ as
$k\too \infty$. Then there exists, for every $\eps>0$, a $K \in
\mathds{N}$ such that for all $k>K$; and all $\wh{m} \in
M_{D,0,u_k}$,
$$\inf_{w\in \Omega_{0,R}}\abs{\wh{m}(w)} > 1-\eps.$$
\end{lem}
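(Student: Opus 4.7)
The plan is to reduce the statement to an explicit estimate showing that any M\"obius map sending $0$ to a point $u_k$ close to $\partial\D$ pushes the fixed hyperbolic disk $\Omega_{0,R}$ into a shrinking Euclidean collar of the boundary. By Lemma \ref{lem:m(omega_z)=omega_w}, any $\wh m\in M_{D,0,u_k}$ sends $\Omega_{0,R}$ onto $\Omega_{u_k,R}$, so the task reduces to showing that every point of $\Omega_{u_k,R}$ has modulus approaching $1$ as $k\to\infty$, uniformly in the (one-parameter) choice of $\wh m$.

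To make this quantitative I would first invoke Lemma \ref{lem:a_and_tet_formula_in_mobius_interpolation}: every $\wh m\in M_{D,0,u_k}$ has the form $\wh m(w) = \tau(w-a)/(1-\bar a w)$ with $a = a(0,u_k,\sigma) = -u_k\bar\sigma$, so in particular $|a| = |u_k|$ independently of the parameter $\sigma \in S^1$. Combining this with the standard hyperbolic-isometry identity (which follows from (\ref{e:disk_mobius_is_isometry_of_hyperbolic_geom}))
$$1 - |\wh m(w)|^2 \;=\; \frac{(1-|a|^2)(1-|w|^2)}{|1-\bar a w|^2} \;=\; \frac{(1-|u_k|^2)(1-|w|^2)}{|1-\bar a w|^2},$$
and the elementary bound $|1-\bar a w| \geq 1 - |a||w| \geq 1 - r_R$, valid for $w\in\Omega_{0,R}$ since $|w|\leq r_R = \tanh R < 1$, I obtain the uniform estimate
$$1 - |\wh m(w)|^2 \;\leq\; \frac{1-|u_k|^2}{(1-r_R)^2}.$$

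Since $u_k = e^{\bfi\psi}+\eps_k$ with $|\eps_k|\to 0$, we have $|u_k|\to 1$, so the right-hand side tends to $0$ uniformly in $w\in\Omega_{0,R}$ and in the choice of $\wh m\in M_{D,0,u_k}$ (the parameter $\sigma$ has dropped out of the bound entirely). Given $\eps>0$, choosing $K$ so that $(1-|u_k|^2)/(1-r_R)^2 < 2\eps - \eps^2$ for all $k>K$ then forces $|\wh m(w)|^2 > (1-\eps)^2$, i.e.\ $|\wh m(w)|>1-\eps$, which is exactly the claim.

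I do not expect a serious obstacle: once Lemma \ref{lem:a_and_tet_formula_in_mobius_interpolation} is used to make the $\sigma$-dependence of $a$ explicit, the whole argument is essentially a one-line Schwarz--Pick computation. The only point worth verifying is uniformity over the family $M_{D,0,u_k}$, and this is automatic because $|a|=|u_k|$ does not depend on $\sigma$.
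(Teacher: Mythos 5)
Your proof is correct and takes a genuinely different route from the paper's. The paper's proof fixes the parametrization $\wh{m}(w) = e^{\bfi\theta}\frac{w+u_k e^{-\bfi\theta}}{1+\bbar{u_k}e^{\bfi\theta}w}$, substitutes $u_k = e^{\bfi\psi}+\eps_k$, and after an algebraic manipulation obtains the explicit bound $|\wh{m}(w) - e^{\bfi\psi}| \leq |\eps_k|\,\frac{1+r_R}{1-r_R(1+|\eps_k|)}$, uniformly over $w\in\Omega_{0,R}$ and over $\theta$; this in fact proves the stronger (though unused) statement that $\wh{m}(w)$ converges to the particular boundary point $e^{\bfi\psi}$. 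You instead observe from Lemma \ref{lem:a_and_tet_formula_in_mobius_interpolation} that the pole parameter $a=a(0,u_k,\sigma)=-u_k\bbar{\sigma}$ satisfies $|a|=|u_k|$ independently of $\sigma$, then apply the Schwarz--Pick identity $1-|\wh{m}(w)|^2 = (1-|a|^2)(1-|w|^2)/|1-\bbar{a}w|^2$ (which, upon computing $|\wh{m}'(w)|$, is indeed equivalent to (\ref{e:disk_mobius_is_isometry_of_hyperbolic_geom})), and conclude via $|1-\bbar{a}w|\geq 1-|a||w|\geq 1-r_R$. Your argument is the more conceptual of the two: uniformity in $\sigma$ is automatic because the estimate depends on $a$ only through $|a|=|u_k|$, and convergence is tied directly to $|u_k|\to 1$ rather than to the specific perturbation form $u_k = e^{\bfi\psi}+\eps_k$; the paper's computation buys the identification of the limit point, which the lemma does not actually require. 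Both proofs are complete and of comparable length.
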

The set $M_{D,0,u_k}$ used in this lemma is given by Definition
\ref{def:M_D,z_0,w_0}.
\begin{proof}
From Lemma \ref{lem:a_and_tet_formula_in_mobius_interpolation} we
can write $\wh{m}$ as $$\wh{m}(w)=e^{\bfi \theta}\frac{w+u_k
e^{-\bfi \theta}}{1 + \bbar{u_k}e^{\bfi \theta}w},$$ for some
$\theta \in [0,2\pi)$. Substituting $u_k = e^{\bfi \psi} + \eps_k$
in this equation we get
\begin{align*}
\wh{m}(w) & = e^{\bfi \theta}\frac{w+(e^{\bfi \psi} + \eps_k)
e^{-\bfi \theta}}{1 + \bbar{(e^{\bfi \psi} + \eps_k)}e^{\bfi
\theta}w}
 = e^{\bfi \psi} \frac{1+w e^{\bfi (\theta - \psi)} + \eps_k \eps^{-\bfi \psi}}{1 + w e^{\bfi (\theta - \psi)} + \bbar{\eps_k}e^{\bfi\theta}w }.
\end{align*}
Writing the shorthand $s$ for $s = 1+w e^{\bfi(\theta-\psi)}$, we
have thus
\begin{align*}
\abs{\wh{m}(w)-e^{\bfi \psi}} & = \abs{e^{\bfi \psi} \frac{s +
\eps_k \eps^{-\bfi \psi}}{s + \bbar{\eps_k}e^{\bfi\theta}w } -
e^{\bfi \psi}}
 \leq
\abs{ e^{\bfi \psi}\frac{\eps_k e^{-\bfi \psi} - \bbar{\eps}_k e^{\bfi \theta}w }{s + \bbar{\eps}_k e^{\bfi \theta }w }  } \\
& \leq \frac{\abs{ \eps_k e^{-\bfi \psi} - \bbar{\eps}_k e^{\bfi
\theta}w  }}{\abs{ s + \bbar{\eps}_k e^{\bfi \theta}w }}
 \leq
\frac{\abs{\eps_k}\parr{1+ |w|}}{|s|-|\eps_k||w|}
\end{align*}
Now for all $w\in \Omega_{0,R}$, $|w|<r_R=\tanh^{-1}(R)$. This
implies $|s| \geq 1 - |w| \geq 1-r_R$, and $1+|w| \leq 1+r_R$, so
that
$$\abs{\wh{m}(w)-e^{\bfi \psi}} \leq |\eps_k|\frac{1+r_R}{1-r_R - |\eps_k|r_R} = |\eps_k|\frac{1+r_R}{1-r_R\parr{1+ |\eps_k|}}.$$
Since $\abs{\eps_k}\too 0$ the lemma follows.
\end{proof}

%%%%%%%%%%%%%%%%%%%%%%%%%%%%%%%%%%%%%%%%%%%%%%%

We are now ready for \\
{\bf Lemma \ref{lem:extension_of_d}} {\em Let
$\set{(z_k,w_k)}_{k\geq 1} \subset \D\times\D$ be a sequence that
converges, in the Euclidean norm, to some point $(z',w') \in
\bbar{\D}\times\bbar{\D} \setminus \D\times\D$, that is
$|z_k-z'|+|w_k-w'| \too 0$, as $k \too \infty$. Then, $\lim_{k\too
\infty}d^R_{\xi,\zeta}(z_k,w_k)$ exists and depends only on the
limit point $(z',w')$.}
\begin{proof}
Since $(z',w') \in \bbar{\D}\times\bbar{\D} \setminus \D\times\D$
either $z'\in \bbar{\D}\setminus \D$ or $w'\in \bbar{\D}\setminus
\D$. Let us assume that $z'\in \bbar{\D}\setminus \D$ (the case
$w'\in \bbar{\D}\setminus \D$ is similar). Denote by $m_k$ an
arbitrary M\"{o}bius transformation in $M_{D,0,w_k}$. By symmetry of
the distance and using a change of variables we then obtain
\begin{align*}
d^R_{\xi,\zeta}(z_k,w_k) & = d^R_{\zeta,\xi}(w_k,z_k) \\
& = \min_{m(w_k)=z_k} \int_{\Omega_{w_k,R}} \Big |
\zeta(w)-\xi(m(w)) \Big |d\vol_H(w) \\ & = \min_{m(w_k)=z_k}
\int_{\Omega_{0,R}}\Big | \zeta(m_k(w))-\xi(m(m_k(w))) \Big
|d\vol_H(w).
\end{align*}
Now, recall that $\xi(z) = \xi^H(z) = \wt{\xi}(z) (1-|z|^2)^2$,
where $\wt{\xi}(z)$ is a bounded function, $\sup_{z\in
\D}|\wt{\xi}(z)| \leq C_{\wt{\xi}}$. From Lemma
\ref{lem:extermal_mobius} we know that for every $\eps>0$ and for
$k>K$ sufficiently large, $\abs{m(m_k((w))} > 1 -\eps$ for all $w\in
\Omega_{0,R}$, and all $m$ such that $m(w_k)=z_k$. This means that
for these $k>K$ we have
\begin{align*}
\abs{\xi(m(m_k(w)))} & =
\abs{\wt{\xi}(m(m_k(w)))}(1-\abs{m(m_k(w))}^2)^2 \\ \ & \leq
C_{\wt{\xi}}(1-(1-\eps)^2)^2\leq C_{\wt{\xi}}\eps^2(2-\eps)^2,
\end{align*}
for all $w \in \Omega_{0,R}.$
Therefore,
\begin{align*}
& \abs{d^R_{\xi,\zeta}(z_k,w_k) -
\int_{\Omega_{0,R}}\babs{\zeta(m_k(w))}d\vol_H(w)} \\ & \leq \abs{
\min_{m(w_k)=z_k}
\int_{\Omega_{0,R}}\babs{\zeta(m_k(w))-\xi(m(m_k(w)))}d\vol_H(w) -
\int_{\Omega_{0,R}}\babs{\zeta(m_k(w))}d\vol_H(w) } \\ &
 \leq
\abs{ \min_{m(w_k)=z_k} \int_{\Omega_{0,R}} \set{
\babs{\zeta(m_k(w))-\xi(m(m_k(w)))} - \babs{\zeta(m_k(w))} }
d\vol_H(w)} \\ & \leq \min_{m(w_k)=z_k} \int_{\Omega_{0,R}}
\babs{\xi(m(m_k(w)))} d\vol_H(w) \too 0, \ \mathrm{as} \ k\too
\infty.
\end{align*}
Therefore $d^R_{\xi,\zeta}(z_k,w_k)$ converges, as $k\too \infty$,
if and only if $\int_{\Omega_{0,R}}\abs{\zeta(m_k(w))}d\vol_H(w)$
converges, and to the same limit, for any $m_k \in M_{D,0,w_k}$. We
can take, for instance, $m_k(w) = \frac{w+w_k}{1+\bbar{w_k}w}$ which
gives
$$\int_{\Omega_{0,R}}\abs{\zeta(m_k(w))}d\vol_H(w)  =
\int_{\Omega_{0,R}}\abs{\zeta\parr{\frac{w+w_k}{1+\bbar{w_k}w}}}d\vol_H(w).$$
For $w\in \Omega_{0,R}$, $\abs{1+\bbar{w_k}w} > 1 - r_R$. It follows
that this expression has a limit as $k \too \infty$, and
$$\lim_{k\too \infty} \int_{\Omega_{0,R}}\abs{\zeta(m_k(w))}d\vol_H(w) =
\int_{\Omega_{0,R}}\abs{\zeta\parr{
\frac{w+w'}{1+\bbar{w'}w}}}d\vol_H(w),$$ which clearly depends on
$w'$, not on the sequence $\set{w_k}$.
\end{proof}

Next, we prove Theorem
\ref{thm:convergence_of_numerical_quadrature}. We start with a
simple lemma showing that all M\"{o}bius transformations restricted
to $\Omega_{0,R}$, $R < \infty$, are Lipschitz with a universal
constant, for which we provide an upper bound.
\begin{lem}\label{lem:mobius_is_lipschitz}
A M\"{o}bius transformation $m \in \Md$ restricted to
$\Omega_{0,R}$, $R<\infty$ is Lipschitz continuous with Lipschitz
constant $C_m \leq \frac{1-|a|^2}{(1-r_R|a|)^2}$.
\end{lem}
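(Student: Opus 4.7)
The plan is to compute $|m'(z)|$ explicitly for a disk Möbius transformation and bound it uniformly on $\Omega_{0,R}$. Writing $m(z) = e^{\bfi\theta}\frac{z-a}{1-\bar{a}z}$ as in \eqref{e:disk_mobius}, a direct differentiation (quotient rule) gives
\[
m'(z) \;=\; e^{\bfi\theta}\,\frac{1-|a|^2}{(1-\bar{a}z)^2},
\qquad\text{hence}\qquad
|m'(z)| \;=\; \frac{1-|a|^2}{|1-\bar{a}z|^2}.
\]
This is the classical formula and is the one relevant calculation in the proof.

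Next I would bound the denominator from below on $\Omega_{0,R}$. Since every point $z \in \Omega_{0,R}$ satisfies $|z|\le r_R$, the reverse triangle inequality yields $|1-\bar{a}z| \ge 1 - |a||z| \ge 1 - r_R|a|$, which is strictly positive because $|a|<1$ and $r_R<1$. Plugging this into the expression for $|m'(z)|$ gives the uniform bound
\[
\sup_{z\in\Omega_{0,R}} |m'(z)| \;\le\; \frac{1-|a|^2}{(1-r_R|a|)^2}.
\]

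Finally, to conclude Lipschitz continuity with this constant, I would invoke the fact that $\Omega_{0,R}$ is a Euclidean disk (hence convex), so for any two points $z_1,z_2 \in \Omega_{0,R}$ the straight line segment joining them lies inside $\Omega_{0,R}$, and the mean value inequality (applied to the holomorphic function $m$ along this segment) gives
\[
|m(z_1)-m(z_2)| \;\le\; \Big(\sup_{z\in\Omega_{0,R}} |m'(z)|\Big)\,|z_1-z_2|
\;\le\; \frac{1-|a|^2}{(1-r_R|a|)^2}\,|z_1-z_2|.
\]

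There is no serious obstacle here; the only points to check carefully are the explicit derivative formula and the fact that $\Omega_{0,R}$ coincides with a Euclidean disk centered at $0$ (noted right before \eqref{e:neighborhood_def}), which is what makes the mean value inequality applicable without any detour through the hyperbolic structure.
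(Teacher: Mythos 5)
Your proof is correct, but it takes a different route from the paper's. The paper argues purely algebraically: it writes out $m(z)-m(w)$ over a common denominator, observes that the numerator $(z-a)(1-w\bar a)-(w-a)(1-z\bar a)$ collapses to $(z-w)(1-|a|^2)$ after expansion, and then bounds each factor $|1-\bar a z|$ and $|1-\bar a w|$ below by $1-r_R|a|$. You instead differentiate, get the standard formula $|m'(z)|=(1-|a|^2)/|1-\bar a z|^2$, bound the derivative uniformly on the (Euclidean, hence convex) disk $\Omega_{0,R}$, and invoke the mean value inequality along the segment joining two points. Both arguments produce the identical constant and both are complete; the algebraic identity the paper uses is really just the integrated version of your derivative formula. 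Your approach is perhaps more transparent about \emph{why} the constant arises (it is the sup of $|m'|$), and generalizes immediately to any convex subdomain of $\D$; the paper's is self-contained and avoids the appeal to convexity and the mean value inequality, at the cost of a small algebraic miracle. You are right that the one geometric fact to check is that $\Omega_{0,R}$ is a Euclidean disk about the origin (hence convex), which the paper establishes just before \eqref{e:neighborhood_def}.
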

\begin{proof}
Denote $m(z)=e^{\bfi \theta}\frac{z-a}{1-z\bbar{a}}$. Then, for
$z,w\in \Omega_{0,R}$ we have
\begin{align*}
\babs{m(z)-m(w)} & \leq \babs{e^{\bfi \theta}\frac{z-a}{1-z\bbar{a}}
- e^{\bfi \theta}\frac{w-a}{1-w\bbar{a}} } \leq
\babs{\frac{(z-a)(1-w\bbar{a})-(w-a)(1-z\bbar{a})}{(1-z\bbar{a})(1-w\bbar{a})}}\\
& \leq \babs{\frac{(z-w)(1-\abs{a}^2)}{(1-z\bbar{a})(1-w\bbar{a})}}
\leq |z-w|\frac{1-\abs{a}^2}{(1-r_R\abs{a})^2}.
\end{align*}
\end{proof}

Next we prove: \\
{\bf Theorem \ref{thm:convergence_of_numerical_quadrature}} {\em For
continuously differentiable $\mu,\nu$,
\[
\left|\,d^R_{\mu,\nu}(z_i,w_j)- \min_{m(z_i)=w_j}\sum_k \,\alpha_k
\,\left|\,\mu(\widetilde{m}_i (p_k)) - \nu(m(\widetilde{m}_i
(p_k)))\,\right|\, \right| \leq C\,\varphi\left( \set{p_k} \right)~,
\]
where the constant $C$ depends only on $\mu,\nu,R$. }
\begin{proof}
First, denote $f(z) = \babs{\mu(\wt{m}_i (z)) - \nu(m(\wt{m}_i
(z)))}$. Then,
\begin{eqnarray}\label{e:approx_quad_eq1}
\babs{\int_{\Omega_0}f(z)d\vol_H(z)-\min_{m(z_i)=w_j}\sum_k \alpha_k
f(p_k)} & \leq \sum_k \int_{\Omega_0} \babs{f(z)-f(p_k)}d\vol_H(z)  \\
\nonumber & \leq \omega^{\Omega_0}_{f}\parr{\varphi
\parr{\set{p_k}}} \int_{\Omega_0}d\vol_H,
\end{eqnarray}
where the modulus of continuity $\omega^{\Omega_0}_{f}\parr{h} =
\sup_{|z-w|<h; z,w\in \Omega_0}\abs{f(z)-f(w)}$ is used. Note that
\begin{equation}\label{e:approx_quad_eq2}
    \omega^{\Omega_0}_{f} \leq \omega^{\Omega_0}_{\mu \circ \wt{m}_i} +
\omega^{\Omega_0}_{\nu \circ m \circ \wt{m}_i }.
\end{equation}
Since $\mu,\nu$ have continuous derivatives on compact domain, they
are Lipschitz continuous. Denote their Lipschitz constants by
$C_\mu,C_\nu$, respectively. From Lemma
\ref{lem:mobius_is_lipschitz} we see that, for $z,w \in \Omega_0$,
\begin{align*}
\babs{\mu(\wt{m}_i(z))-\mu(\wt{m}_i(w))} & \leq C_\mu
\babs{\wt{m}_i(z) - \wt{m}_i(w)}  \leq C_\mu
\frac{1-|a|^2}{(1-r_R|a|)^2} \abs{z-w}  \leq C_\mu
\frac{1}{(1-r_R)^2} \abs{z-w},
\end{align*}
which is independent of $\wt{m}_i$. Similarly,
\begin{align*}
\babs{\nu(m(\wt{m}_i(z)))-\nu(m(\wt{m}_i(w)))} & \leq C_\nu
\babs{m(\wt{m}_i(z)) - m(\wt{m}_i(w))} \leq C_\nu
\frac{1}{(1-r_R)^2} \abs{z-w},
\end{align*}
which is independent of $m,\wt{m}_i$. Combining these with eq.
(\ref{e:approx_quad_eq1}-\ref{e:approx_quad_eq2}) we get
$$\babs{\int_{\Omega_0}f(z)d\vol_H(z)-\min_{m(z_i)=w_j}\sum_k \alpha_k
f(p_k)} \leq
\parr{C_\mu+C_\nu}\frac{\int_{\Omega_0}d\vol_H}{(1-r_R)^2}\varphi \parr{\set{p_k}},$$
which finishes the proof.
\end{proof}

\renewcommand{\thesection}{ \Alph{section}}
\section{}
\label{a:appendix B}
\renewcommand{\thesection}{\Alph{section}}
%\section*{Appendix B}
In this appendix we provide a short exposition on discrete and
conjugate discrete harmonic functions on triangular meshes as
presented in \cite{Dziuk88,Pinkall93,Polthier00,Polthier05}, and we
show how this theory can be used in our context to conformally
flatten disk-type ( or even just simply connected) triangular
meshes.

We will use the same notations as in Section
\ref{s:the_discrete_case_implementation}. Discrete harmonic
functions are defined using a variational principle in the space of
continuous piecewise linear functions defined over the mesh $\PL_\M$
(\cite{Dziuk88}), as follows. Let us denote by $\phi_i(z),$
$i=1,..,m,$ the scalar functions that satisfy
$\phi_j(v_i)=\delta_{i,j}$ and are linear on each triangle
$f_{i,j,k} \in \F$. Then, the (linear) space of continuous
piecewise-linear function on $M$ can be written in this basis:
$$\PL_M = \set{\sum_{i=1}^m u_i\phi_i(z) \ \mid \ (u_1,...,u_m)^T\in \Real^m }.$$
Next, the following quadratic form is defined over $\PL_M$:
\begin{equation}\label{e:discrete_dirichlet_energy}
    E_{Dir}(u) = \sum_{f\in \F} \int_{f} \ip{\nabla u,\nabla u} d\vol_{\Real^3},
\end{equation}
where $\ip{\cdot}=\ip{\cdot}_{\Real^3}$ denotes the inner-product
induced by the ambient Euclidean space, and $d\vol_{\Real^3}$ is the
induced volume element on $f$. This quadratic functional, the {\em
Dirichlet energy}, can be written in coordinates of the basis
defined earlier as follows:
\begin{equation}\label{e:dirichelet_energy_in_coordinates}
    E_{Dir}\left(\sum_i u_i \phi_i \right) = \sum_{i,j=1}^m u_i u_j \brac{\sum_{f\in\F} \int_{f} \ip{\nabla \phi_i, \nabla \phi_j}}d\vol_{\Real^3} =
    \sum_{i,j=1}^m u_i u_j \int_{M} \ip{\nabla \phi_i, \nabla \phi_j}d\vol_{\Real^3}.
\end{equation}

The discrete harmonic functions are then defined as the functions
$u\in PL_M$ that are critical for $E_{Dir}(u)$, subject to some
constraints on the boundary of $M$. The linear equations for
discrete harmonic function $u \in \PL_M$ are derived by partial
derivatives of $E_{Dir}$, (\ref{e:dirichelet_energy_in_coordinates})
w.r.t. $u_i, i=1,..,m$:
\begin{equation}\label{e:partials_of_dirichlet}
    \frac{\partial E_{Dir}(u)}{\partial u_k} =
2\sum_{i=1}^m u_i \brac{\sum_{f\in\F} \int_{f} \ip{\nabla \phi_i,
\nabla \phi_k}}d\vol_{\Real^3} = 2 \int_M \ip{\nabla u , \nabla
\phi_k}d\vol_{\Real^3} = 2 \int_{R_k} \ip{\nabla u , \nabla
\phi_k}d\vol_{\Real^3},
\end{equation}
where $R_k \subset M$ is the 1-ring neighborhood of vertex $v_k$.
The last equality uses that $\phi_k$ is supported on $R_k$.

Now, let $u=\sum_i u_i \phi_i$ be a discrete harmonic function.
Pinkall and Polthier observed that conjugating the
piecewise-constant gradient field $\nabla u$ (constant on each
triangle $f \in \F$), i.e. rotating the gradient $\nabla u$ in each
triangle $f$ by $\pi/2$ in the positive ( = counterclockwise) sense
(we assume $M$ is orientable), results in a new vector field $*du =
J du$ with the special property that its integrals along (closed)
paths that cross edges only at their mid-points are systematically
zero (see for example \cite{Polthier05}). This means in particular
that we can define a piecewise linear function $\stu$ such that its
gradient satisfies $d\stu = *du$ and that is furthermore continuous
through the mid-edges $\mv \in \mV$. The space of piecewise-linear
functions on meshes that are continuous through the mid-edges is
well-known in the finite-element literature, where it is called
$\ncPL_M$, the space of non-conforming finite elements
\cite{Brenner:2008:MTF}. The Dirichlet form
(\ref{e:discrete_dirichlet_energy}) is defined over the space of
non-conforming elements $\ncPL_M$ as well; the non-conforming
discrete harmonic functions are defined to be the functions
$v\in\ncPL_M$ that are critical for $E_{Dir}$ and that satisfy some
constraints on the mid-edges of the boundary of the mesh. Polthier
\cite{Polthier05} shows that if $u\in \PL_M$ is a discrete harmonic
function, then $\stu \in \ncPL_M$ is also discrete harmonic, with
the same Dirichlet energy, and vise-versa. Solving for the discrete
harmonic function after fixing values at the boundaries amounts to
solving a sparse linear system which is explicitly given in
\cite{Polthier05}.

This theory can be used to define discrete conformal mappings, and
used to flatten a mesh in a ``discrete conformal'' manner, as
follows. The flattening is done by constructing a pair of conjugate
piecewise linear functions $(u,\stu)$ where $u\in \PL_M$, $\stu \in
\ncPL_M$, and the flattening map $\Phi:\mM \rightarrow \C$ is given
by
\begin{equation}\label{e:Phi_flattening_map}
    \Phi = u+\bfi \stu.
\end{equation}
Since $d\stu = J du$, $\Phi$ is a similarity transformation on each
triangle $f\in \F$. Furthermore, $\Phi$ is continuous through the
mid-edges $\mv_r\in\mV$. This means that $\Phi$ is well-defined on
the mid-edges $\mV$ and maps them to the complex plane.

The function $u$ is defined by choosing an arbitrary triangle
$f_{out} \in F$, excising it from the mesh, setting the values of
$u$ at two of $f_{out}$'s vertices $u_{i_1},u_{i_2}$ to $0$ and $1$,
respectively,  and then solving for the discrete harmonic $u$ that
satisfies these constraints. See for example Figure
\ref{f:discrete_type_2} (top-left); the ``missing mid-edge face''
corresponding to the excised face $f_{out}$ would have connected the
three mid-edge vertices that have a only one mid-edge face touching
them. The conjugate function $\stu$ is constructed by a simple
conjugation (and integration) process as described in
\cite{Polthier05} and \cite{Lipman:2009:MVF}.

A surprising property of the Discrete Uniformization $\Phi$ as it is
defined above, which nicely imitates the continuous theory (see
\cite{Springer57}) is that it takes the boundaries of $\M$ to
horizontal slits, see Figure \ref{f:discrete_type_2}, top row
(boundary vertices colored in red). This property allows us to
easily construct a closed form analytic map (with ``analytic'' in
its standard complex analytic sense) that will further bijectively
map the entire complex plane $\C$ minus the slit to the open unit
disk, completing our Uniformization procedure.

This property is proved by arguments similar to those for
Proposition 35 in \cite{Polthier05}; see also
\cite{Lipman:2009:MVF}. More precisely, we have
\begin{thm} \label{t:midedge_boundary}
Let $\Phi:\mM \rightarrow \C$ be the flattening map from the
mid-edge mesh $\mM$ of a mesh $\M$ with boundary, using a discrete
harmonic and conjugate harmonic pair as described above. Then, for
each connected component of the boundary of $\M$,  the mid-edge
vertices of boundary edges are all mapped onto one line segment
parallel to the real axis.
\end{thm}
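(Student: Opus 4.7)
The plan is to show that on each connected component of $\partial\M$ the harmonic conjugate $\stu$ takes a single value at all mid-edges of boundary edges; since the real part $u$ is unconstrained on the boundary, the images $\Phi(\mv)=u(\mv)+\bfi\,\stu(\mv)$ of those mid-edges will then populate a horizontal line segment, which is exactly the claim.

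First I would exploit the variational characterization of $u$. Because $u$ is critical for $E_{Dir}$ on $\PL_{M\setminus f_{out}}$ with only the two Dirichlet constraints at $v_{i_1},v_{i_2}$, for every other vertex $v_k$ the identity $\partial_{u_k}E_{Dir}=0$ gives the discrete Neumann condition $\int_{R_k}\langle\nabla u,\nabla\phi_k\rangle\,d\vol_{\R^3}=0$; in particular this holds at every free boundary vertex. The heart of the argument is a computation translating this identity into a statement about $\stu$. Enumerating the edges at a free boundary vertex $v_k$ counterclockwise as $e^1,\ldots,e^d$ (with $e^1,e^d$ the two boundary edges at $v_k$, and $v_{j_l}$ the far endpoint of $e^l$), and writing $f^l$ for the fan triangle containing both $\mv^l$ and $\mv^{l+1}$, the increments of $\stu$ across $f^l$ satisfy
\[
\stu(\mv^{l+1})-\stu(\mv^l)=\langle J_{f^l}\nabla u|_{f^l},\,\mv^{l+1}-\mv^l\rangle=\tfrac12\langle J_{f^l}\nabla u|_{f^l},\,v_{j_{l+1}}-v_{j_l}\rangle,
\]
since $\mv^l=\tfrac12(v_k+v_{j_l})$. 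Summing in $l$, using the antisymmetry $\langle J_f a,b\rangle=-\langle a,J_f b\rangle$, and invoking the elementary identity $\int_{f^l}\nabla\phi_k\,d\vol_{\R^3}=\tfrac12\,J_{f^l}(v_{j_{l+1}}-v_{j_l})$ (which expresses that $\nabla\phi_k$ is constant on $f^l$, perpendicular to the opposite edge, of magnitude $1/h_k$, while $\mathrm{Area}(f^l)=\tfrac12 h_k|v_{j_{l+1}}-v_{j_l}|$), I obtain
\[
\stu(\mv^d)-\stu(\mv^1)=-\sum_l \Big\langle\nabla u|_{f^l},\int_{f^l}\nabla\phi_k\,d\vol_{\R^3}\Big\rangle=-\int_{R_k}\langle\nabla u,\nabla\phi_k\rangle\,d\vol_{\R^3}=0.
\]
In other words, the two boundary mid-edge values of $\stu$ at $v_k$ coincide.

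Propagating this equality along a boundary component $\Gamma\subset\partial\M$ then shows that $\stu$ is constant on all boundary mid-edges of $\Gamma$, provided every vertex of $\Gamma$ is free. If one of the Dirichlet vertices $v_{i_1},v_{i_2}$ happens to lie on $\Gamma$, the same identity at its free neighbors, combined with the observation that the edges of $f_{out}$ incident to the Dirichlet vertex belong to the excised ``cut'' (contributing to the slit) rather than to $\partial\M$, still lets us bridge across. In either case $\stu$ takes a single value on the mid-edges of $\Gamma$, so $\Phi$ sends them onto a segment of constant imaginary part, i.e.\ parallel to the real axis. The main obstacle is the vectorial bookkeeping that identifies the fan-sum telescoping of $\stu$-increments with the discrete Dirichlet derivative at $v_k$; the two ingredients enabling this identification are the antisymmetry of $J$ under the inner product and the explicit formula for $\int_f\nabla\phi_k$.
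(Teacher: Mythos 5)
Your proof is correct and follows essentially the same route as the paper's: you telescope the increments of $\stu$ around the fan at a boundary vertex, use the antisymmetry of $J$ together with the explicit formula $\nabla\phi_k|_f=\frac{J(v_{j_{l+1}}-v_{j_l})}{2\,\vol(f)}$, and identify the result with the Euler--Lagrange (discrete Neumann) condition $\int_{R_k}\langle\nabla u,\nabla\phi_k\rangle\,d\vol=0$, exactly as the paper does via the contour integral $\int_\gamma *du$. The only difference is that you spell out the vector bookkeeping more explicitly and flag the Dirichlet-vertex edge case (which the paper, like you, handles only implicitly by excluding those two vertices from the Neumann identity).
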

\begin{proof}
Suppose $u=\sum_i u_i \phi_i(\cdot)$ is a discrete harmonic,
piecewise linear and continuous function, defined at each vertex
$v_i \in\V$,  excluding the two vertices of the excised triangle for
which values are prescribed; then we have, by
(\ref{e:partials_of_dirichlet}),
\begin{equation}\label{e:euler-lagrange_discrete_harmonic}
    \int_{R_i}\langle \nabla \phi_i, \nabla u\rangle d\vol_{\R^3} = 0,
\end{equation}

Next, consider a boundary vertex $v_j$ of the mesh $\M$. Denote by
$\mv_r,\mv_s$ the mid-edge vertices on the two boundary edges
touching vertex $v_j$. We will show that $\stu(\mv_r)=\stu(\mv_s)$;
this will imply the theorem, since $\stu$ gives the imaginary
coordinate for the images  of the mid-edge vertices under the
flattening map (see (\ref{e:Phi_flattening_map})) .

Observe that on the triangle $f_{i,j,k}$,
\begin{equation}\label{e:grad_phi_j}
    \nabla \phi_j = \frac{J(v_i-v_k)}{2\,\vol_{\Real^3}(f_{i,j,k})}.
\end{equation}

Recalling that $\nabla \stu= J \nabla u$, using
(\ref{e:grad_phi_j}), and $J^T=-J$, we obtain
\begin{align*}
\stu(\mv_r)-*u(\mv_s) & =\int_\gamma d \stu = \int_\gamma *du = \sum_{f_{i,j,k} \ni v_j} \ip{ J \nabla  u\mid_{f_{i,j,k}} , \frac{1}{2}(v_i-v_k)} \\
& = \sum_{f_{i,j,k}\ni v_j} \ip{  \nabla  u\mid_{f_{i,j,k}} , \frac{1}{2}J^T (v_i-v_k) } \\
& =  \sum_{f_{i,j,k}\ni v_j} \ip{  \nabla  u\mid_{f_{i,j,k}} , - \nabla \phi_j \mid_f } \vol_{\Real^3}(f) \\
& = - \int_M \ip{ \nabla u , \nabla \phi_j } d\vol_{\Real^3} \\
& = 0,
\end{align*}
where $\gamma$ is the piecewise linear path starting at $\mv_r$ and
passing through the mid-edge vertices of the 1-ring neighborhood of
$v_j$ ending at $\mv_s$. The last equality is due to
(\ref{e:partials_of_dirichlet}).
\end{proof}

A natural question, when dealing with any type of finite-element
approximation, concerns convergence as the mesh is refined:
convergence in what sense, and at what rate? For discrete harmonic
functions over meshes, this convergence is discussed in
\cite{Hildebrandt06,Polthier00}. Note that these convergence results
are in the weak sense; this motivated our defining the discrete
conformal factors $\mu_{\mf}$ via integrated quantities (volumes) in
Section \ref{s:the_discrete_case_implementation}.

Finally, we note that the method presented here for Discrete
Uniformization is just one option among several; other authors have
suggested other techniques; for example \cite{Gu03}. Typically, this
part of the complete algorithm described in this paper could be
viewed as a ``black box'': the remainder of the algorithm would not
change if one method of  Discrete Uniformization is replaced by
another.

\renewcommand{\thesection}{ \Alph{section}}
\section{}
\label{a:appendix C}
\renewcommand{\thesection}{\Alph{section}}
%\section*{Appendix C}
In this Appendix we prove a lemma used in the proof of Theorem
\ref{t:relaxation}.

{\bf Lemma} {\em The $N \times N$ matrices $\pi$ satisfying
\begin{equation}
     \left \{ \begin{array}{c}
             \sum_i \pi_{ij} \leq 1 \\
             \sum_j \pi_{ij} \leq 1 \\
             \pi_{ij} \geq 0 \\
             \sum_{i,j} \pi_{ij} = M < N
           \end{array}
   \right .
\label{appc:constraints}
\end{equation}
constitute a convex polytope $\P$ of which the  extremal points are
exactly those $\pi$ that satisfy all these constraints, and that
have all entries equal to either 0 or 1. }

{\em Remark}. Note that the matrices $\pi \in \P$ with all entries
in $\{0,1\}$ have exactly $M$ entries equal to 1, and all other
entries equal to zero; if one removes from these matrices all rows
and columns that consist of only zeros, what remains is a $M \times
M$ permutation matrix.

\begin{proof}
$\P$ can be considered as a subset of $\R^{N^2}$, with all entries
nonnegative, summing to $M$. The two inequalities in
(\ref{appc:constraints}) imply that the entries of any $\pi \in \P$
are bounded by 1. These inequalities can also be rewritten as the
constraint that every entry of $A \P - b \in \R^{2N}$ is non
positive, where $A$ is a $\R^{2N}\times \R^{N^2}$ matrix, and $b$ is
a  vector in $\R^{2N}$. It follows that $\P$ is a (bounded) convex
polytope in $\R^{N^2}$.

If $\pi \in \P\subset \R^{N^2}$ has entries equal to only 0 or 1,
then $\pi$ must be an extremal point of $\P$ by the following
argument. If $\pi_{\ell}=1$, and $\pi$ is a nontrivial convex
combination of $\pi^1$ and $\pi^2$ in $\P$, then
\[
\pi=\lambda\,\pi^1\,+\,(1-\lambda)\,\pi^2 \,\mbox{ with }\lambda \in
(0,1)\, \Longrightarrow 1=
\lambda\,\pi^1_{\ell}\,+\,(1-\lambda)\,\pi^2_{\ell} \,\mbox{ with }
\pi^1_{\ell}\,,\,\pi^2_{\ell}\geq 0 \Longrightarrow
\pi^1_{\ell}=\pi^2_{\ell}=1~.
\]
A similar argument can be applied for the entries of $\pi$ that are
0. It follows that we must have $\pi^1=\pi=\pi^2$, proving that
$\pi$ is extremal.

It remains thus to prove only that $\P$ has no other extremal
points. To achieve this, it suffices to prove that the extremal
points of $\P$ are all integer vectors, i.e. vectors all entries of
which are integers -- once this is established, the Lemma is proved,
since the only integer vectors in $\P$ are those with all entries in
$\{0,1\}$.

To prove that the extremal points of $\P$ are all integer vectors,
we invoke the Hoffman-Kruskal theorem (see \cite{Lovasz86}, Theorem
7C.1), which states that, given a $L\times K$ matrix $\MM$, with all
entries in $\{-1,0,1\}$, and a vector $b \in \R^L$ with integer
entries, the vertices of the polytope defined by $\{ x \in \R^K \,;
\, (\MM x)_\ell \le b_\ell \,\mbox{ for }\, \ell=1,\ldots,L\}$ are
all integer vectors in  $\R^K$ if and only if the matrix $\MM$ is
totally unimodular, i.e. if and only if  every square submatrix of
$\MM$ has determinant 1, 0 or $-1$.

We first note that (\ref{appc:constraints}) can indeed be written in
this special form. The equality $\sum_{i,j} \pi_{ij} = M$ can be
recast as the two inequalities $\sum_{i,j} \pi_{ij} \le M$ and
$-\,\sum_{i,j} \pi_{ij} \le - M$. The full system
(\ref{appc:constraints}) can then be written as $(\MM \pi)_\ell \le
b_\ell $ for $\ell=1,\ldots,L$, where $\MM$ is a $(2N+2+N^2)\times
N^2$ matrix constructed as follows. Its first $2N$ rows correspond
to the constraints on the sums over rows and columns; the entries of
the next row are all 1, and of the row after that, all $-1$ -- these
two rows correspond to the constraint $\sum_{i,j} \pi_{ij} = M$; the
final $N^2 \times N^2$ block is diagonal, with all its diagonal
entries equal to $-1$. The first $2N$ entries of $b$ are 1; the next
2 entries are $M$ and $-M$; its final $N^2$ entries are 0. By the
Hoffman-Kruskal theorem it suffices thus to show that $\MM$ is
totally unimodular.

Because the last $N^2$ rows,  the {\em bottom rows} of $\MM$, have
only one non-zero entry, which equals $-1$, we can disregard them.
Indeed, if we take a square submatrix of $\MM$ that includes (part
of) one of these bottom rows, then the determinant of the submatrix
is 0 if only zero entries of the bottom row ended up in the
submatrix; if the one -1 entry of the bottom row is an entry in the
submatrix, then the determinant is, possibly up to a sign change,
the same as if that row and the column of the $-1$ entry are
removed. By this argument, we can remove all the rows of the
submatrix partaking of the bottom rows of $\MM$.

We thus have to check unimodularity only for $\MM'$, the submatrix
of $\MM$ given by its first $2N+2$ rows. If any submatrix contains
(parts of) both the $(2N+1)$st and the $(2N+2)$nd row, then the
determinant is automatically zero, since the second of these two
rows equals the first one, multiplied by -1. This reduces the
problem to checking that $\MM''$, the submatrix of $\MM$ given by
its first $2N+1$ rows, is totally unimodular.

We now examine the top $2N$ rows of $\MM''$ more closely. A little
scrutiny reveals that it is, in fact,  the adjacency matrix $\GG$ of
the complete bipartite graph with $N$ vertices in each
part.\footnote{ The adjacency matrix $A$ for a graph $\mathcal{G}$
has as many columns as $\mathcal{G}$ has edges, and as many rows as
$\mathcal{G}$ has vertices; if we label the rows and columns of $A$
accordingly, then $A_{ve}=1$ if the vertex $v$ is an end point of
the edge $e$; otherwise $A_{ve}=0$. An adjacency matrix thus has
exactly two nonzero entries (both equal to 1) in each column. The
number of nonzero entries in the row with index $v$ is the degree of
$v$ in the graph. } It is well-known (see e.g. Theorem 8.3 in
\cite{Schrijver08}) that this adjacency matrix is  totally
unimodular, so any square submatrix of $\MM''$ that does not involve
the $(2N+1)$st row of $\MM''$ is already known to have determinant
0, 1 or $-1$. We thus have to check only submatrices that involve
the last row, i.e. matrices that consist of a $(n-1)\times n$
submatrix of $\GG$, with an added $n$th row with all entries equal
to 1. We'll denote such submatrices by $\GG'$.

We can then use a simple induction argument on $n$ to finish the
proof. The case $n=2$ is trivial. In proving the induction step for
$n=m$, we can assume that each of the top $m-1$ rows of our $m
\times m$ submatrix $\GG'$ contains at least two entries equal to 1,
since otherwise the determinant of $\GG'$ would automatically be 0,
1 or -1 by induction.

The first $m-1$ rows of $\GG'$ correspond to vertices in the
bipartite graph, and can thus be partitioned into two sets $S_1$ and
$S_2$, based on which of the two parts of $N$ vertices in the graph
they pertain to. Let us call $S$ the larger of $S_1$ and $S_2$; $S$
consists of at least $\lceil \frac{m-1}{2} \rceil$ rows.  Let us
examine the $(\# S) \times m$ sub-matrix $\GG''$ constructed from
exactly these rows.  We know that each column of $\GG''$ has exactly
one entry  $1$, since all the rows of $\GG''$ correspond to the same
group of vertices in the bipartite graph. Therefore, summing all the
rows of $\GG''$ gives a vector $v$ of only zeros and ones; since
each row in $\GG''$ contains at least two entries equal to 1, the
sum of all entries in $v$ is at least $2\parr{\lceil \frac{m-1}{2}
\rceil} \geq m-1$. The vector $v$ has thus at least $m-1$ entries
equal to $1$; the remaining $m$th entry of this linear combination
of the top $m-1$ rows of $\GG'$ is either 1 or 0. In the first case,
the determinant of $\GG'$ vanishes, since its last row also consists
of only ones. In the second case, we can subtract $v$ from the last
row of $\GG'$ without changing the value of the determinant; the
resulting last row has all entries but one equal to 0, with a
remaining entry equal to 1. The determinant is then given by the
minor of this remaining entry, and is thus 0, 1 or -1 by the
unimodularity of $\GG$.
\end{proof}

\end{document}